\def\lhs{\textnormal{LHS}}
\def\rhs{\textnormal{RHS}}
\def\inverseFunction#1{ #1^{ -1 } }
\def\myRestriction#1#2{#1 \restriction #2} 
\def\topologicalClosure#1{ \textnormal{Cl} \, #1 }
\def\topologicalInterior#1{ \textnormal{Int} \, #1 }
\def\NN{\mathbb{N}} 
\def\ZZ{\mathbb{Z}} 
\def\RR{\mathbb{R}} 
\def\CC{\mathbb{C}} 
\def\lfrac#1#2{#1 / #2} 
\def\diff#1{d #1} 
\def\complexAdjoint#1{ \overline{#1} }
\def\complexInfty{\infty_{\CC}}
\def\supp{\textnormal{supp} \, }
\def\realLogarithm{\textnormal{ln}\,}
\def\multivaluedLogarithm{\textnormal{log}\,}
\def\principalLogarithm{\textnormal{Log}\,}
\def\principalLogarithmB{\mathcal{L}\,}
\def\principalArgument{\textnormal{Arg}\,}
\def\certainArgumentB{\textnormal{arg}\,}
\def\complexArctan{\textnormal{arctan}\,} 
\def\weaklyConvergesTo{\overset{d}{\to}} 
\def\part{\mathcal{P}} 
\def\monotone{\textnormal{M}}
\def\mStandardGauss{\mu^{\monotone}}
\def\mStandardGaussDensity{\rho^{\monotone}}
\def\part{\mathcal{P}} 
\def\domainTrapezoid{\Theta}
\def\domainOpenTrapezoid{ \topologicalInterior{\Theta} }
\def\domainMaximalTrapezoid{\Xi}
\def\domainMaximalOpenTrapezoid{ \topologicalInterior{\Xi} }
\def\domainExteriorOfTheDisk{E}
\def\domainStripZero{ \Delta_{0} }
\def\domainOpenUpperShell{\Delta}
\def\domainLowerStrip{ \topologicalInterior{D} }
\def\domainRangeOfTZero{L}
\def\domainTrapezoidUnderH{ \domainClosedLowerStrip \setminus \domainRangeOfTZero }
\def\domainClosedLowerStrip{D}
\def\theFunctionMmu{M_{\mu}}
\def\theFunctionGmu{G_{\mu}}
\def\theExtendedGmu{ \widehat{G}_{\mu} }
\def\theFunctionFmu{F_{\mu}}
\def\theExtendedFmu{ \widehat{F}_{\mu} }
\def\theRealValuedT{ T_{0} }
\def\theInverseOfRealT{ \inverseFunction{\theRealValuedT} }
\def\theFunctionT{T}
\def\theFunctionf{f}
\def\theFunctionfZero{f_{0}}
\def\theFunctionh{h}
\def\theGAFunctionT{ \pmb{T} }
\def\certainBranchOfBoldT{ \widetilde{ \theFunctionT } }
\def\theContinuedFunctionT{ \widehat{ \theFunctionT } }
\def\theInverseOfT{ \inverseFunction{\theFunctionT} }
\def\theCurvedf{g}
\def\theCurvedfRe{\Re \theCurvedf}
\def\theCurvedfIm{\Im \theCurvedf}
\def\theCurvedH{H}
\def\theCurvedHRe{\Re H}
\def\theCurvedHIm{\Im H}
\def\theFamilyOfLevelSetsOfImT{\mathcal{G}}
\def\halfOfLogarithm{W}
\def\halfOfLogarithmRestriction{W_1}
\def\halfOfLogarithmInnerFunction{U}
\def\vCLT{\rho}
\def\maximalDomainOfGandF{\CC \setminus [-\sqrt{2+\gamma_{0}}, \sqrt{2+\gamma_{0}}]}
\def\positiveRealHalfAxis{ \RR_{+} }
\def\closedUpperHalfPlane{\topologicalClosure{ \CC_{+} }}
\def\openedFirstQuadrant{Q}
\def\semiclosedFirstQuadrant{ \topologicalClosure{ \openedFirstQuadrant } \setminus \{ 0, \sqrt{2}, \sqrt{2+\gamma_{0}} \} } 
\newtheorem{theorem}{Theorem}[section]
\newtheorem{lemma}[theorem]{Lemma}
\newtheorem{proposition}[theorem]{Proposition}
\newtheorem{corollary}[theorem]{Corollary}
\newtheorem*{theorem*}{Theorem}
\newtheorem*{lemma*}{Lemma}
\newtheorem*{proposition*}{Proposition}
\newtheorem*{corollary*}{Corollary}
\theoremstyle{definition}
\newtheorem{definition}[theorem]{Definition}
\newtheorem{remark}[theorem]{Remark}
\newtheorem*{definition*}{Definition}
\newtheorem*{remark*}{Remark}
\newtheorem*{example*}{Example}
\numberwithin{equation}{section}
\def\triExclamation{}
\begin{document}

\baselineskip=17pt

\title[V-monotone independence, CLT]{Central limit measure for V-monotone independence}

\author[A.~Dacko]{Adrian Dacko}
\address{Faculty of Pure and Applied Mathematics \newline
\indent Wroc\l{}aw University of Science and Technology \newline
\indent Wybrze\.{z}e Wyspia\'{n}skiego 27 \newline
\indent 50-370 Wroc\l{}aw \newline
\indent Poland}
\email{adrian.dacko@pwr.edu.pl}

\date{\today, \currenttime}


\begin{abstract}
We study the central limit distribution $\mu$ for V-monotone independence. Using its Cauchy--Stieltjes transform, we prove that $\mu$ is absolutely continuous with respect to the Lebesgue measure on $\RR$ and we give its density $\rho$ in an implicit form. We present a computer generated graph of $\rho$.
\end{abstract}

\subjclass[2020]{Primary: 46L53, 60F05. Secondary: 30E99.}
\keywords{Noncommutative probability, V-monotone independence, central limit theorem, Cauchy--Stieltjes transform,
Stieltjes inversion formula, V-monotone standard Gaussian measure
}

\maketitle



\section{Introduction}
Central limit theorems are important in all models of noncommutative independence.
In particular, in the central limit theorem for tensor independence (classical independence), free independence and Boolean independence one obtains the standard normal distribution~\cite{{Cushen&Hudson-1971}, {Giri&vonWaldenfels-1978}}, the standard Wigner distribution~\cite{{Speicher-1990}, {Voiculescu-1985}}, and the standard Bernoulli distribution~\cite{Speicher&Woroudi-1997}, respectively.
In the case of monotone (and antimonotone) independence, we obtain the standard arcsine distribution~\cite{Muraki-2001}
\begin{equation*}
\mStandardGauss(\diff x) = \mStandardGaussDensity(x) \, \diff x \text{,}
\end{equation*}
where
\begin{equation*}
\mStandardGaussDensity(x) = \frac{1}{\pi \cdot \sqrt{2 - x^2}}
\end{equation*}
with $x \in (-\sqrt{2}, \sqrt{2})$ (see also~\cite{{Lu-1997a}, {Muraki-1996}}). There are also many other noncommutative analogues of the classical central limit theorem related to other notions of independence, including interpolations and Fock space type constructions~\cite{{Bozejko-1991}, {Bozejko&Leinert&Speicher-1996}, {Bozejko&Wysoczanski-1998}, {Bozejko&Wysoczanski-2001}, {Franz&Lenczewski-1999}, {Lenczewski&Salapata-2006}, {Lenczewski&Salapata-2008}, {vonWaldenfels-1978}, {Wysoczanski-2010}}.

The notion of \mbox{V-monotone} independence was introduced and studied in~\cite{Dacko-2020}. The V-monotone independence can be considered as a combination of two twin models of independence, monotone independence and antimonotone independence, into one model. This is best seen at the Fock space level since the monotone Fock space is associated with decreasing sequences of indices assigned to direct sums, while the antimonotone Fock space is associated with increasing sequences. The simplest combination of these two families of sequences leads to the \mbox{V-monotone} Fock space, which is associated with sequences of indices whose graph has the shape of the letter V (that is, they are either increasing or decreasing, or they decrease up to a certain point and then increase), whence the name \mbox{``V-monotone''}.

In the monotone and antimonotone central limit theorems, the moments of the standard Gaussian measure $\mu$ are expressed by monotonically labeled noncrossing pair partitions and the antimonotonically labeled ones, respectively. The class of \mbox{V-monotonically} labeled noncrossing pair partitions that occurs in the \mbox{V-monotone} central limit theorem contains both of the aforementioned classes. Studying the combinatorics of this class of partitions is rather involved.
It is impossible to obtain a simple recursive formula for the moments of the limit measure $\mu$. This results in the fact that the process of obtaining $\mu$ is far more complex than in the types of independence known so far, and it is necessary to use a nonstandard approach. Eventually, we have
\begin{equation*}
\mu(\diff x) = \rho(x) \, \diff x \text{,}
\end{equation*}
where $x \in (-\sqrt{2 + \gamma_0}, \sqrt{2 + \gamma_0})$ with $\gamma_0 > 0$ defined in~\eqref{equation:partOfAnElementOfTheFrontierOfTheSupportOfTheVSGS}, and the density $\rho$ is given in an implicit form. The situation resembles that of the distribution of $T T^{*}$, where $T$ is the triangular operator, studied by Dykema and Haagerup~\cite{Dykema&Haagerup-2005}, who have also found the density function in an implicit form (see Theorem~8.9 there).

This article is devoted to determining the standard V-monotone Gaussian measure: first, we obtain its Cauchy--Stieltjes transform for real arguments, and then we extend it, in order to have it for complex arguments.
But extending one of the functions occurring in the formula for the transform is quite involved, i.e. we have to extend a certain inverse $\theInverseOfRealT$ of an elementary function. We extend $\theRealValuedT$ in Section~3, mostly working on finding its domain.
We invert this extension in Section~\ref{section:theInverseOfT}. In Section~\ref{section:CLTdensity} we obtain the Cauchy--Stieltjes transform of the V-monotone central limit distribution and then, by means of the transform, we prove that $\mu$ is absolutely continuous with respect to the Lebesgue measure on $\RR$. Finally, we get the density in an implicit form.

In the paper, we present a computer-generated graph of the density, which resembles that of the density of the standard arcsine distribution, but nevertheless differs quite significantly from it, since it has a local maximum at zero and has two symmetric singularities at $x = \pm \sqrt{2}$.

All figures presented in the paper were generated by means of the Wolfram Mathematica system.

\section{Central limit distribution}
In this section, using the results from~\cite{Dacko-2020}, we determine, first on the complement of a symmetric closed interval on $\RR$, the reciprocal Cauchy--Stieltjes transform $\theFunctionFmu$ of the measure $\mu$ obtained as a central limit distribution for V-monotone independence. Then, we analytically extend $\theFunctionFmu$ in order to have it for complex numbers. However, it is quite involved to extend one of the functions occurring in the formula for this transform and we continue to do it only in the next two sections.

Let $(m_{n})_{n=0}^\infty$ be the sequence given by
\begin{equation}
\label{equation:VmonCLT-moments}
m_{n} = 
\begin{cases}
P_{n/2}(1) & \text{ if $n$ is even, } \\
0 & \text{ if $n$ is odd, }
\end{cases}
\end{equation}
where $(P_{n})_{n=0}^{\infty}$ is the family of polynomials of variable $s \in [0, 1]$, defined recursively by
\begin{equation}
\label{equation:VmonCLT-evenMoments-polynomialsRecursion}
\left\{ \begin{aligned}
& P_{0}(s) = 1 \text{,} \\
& P_{n+1}(s) = \sum_{m=0}^{n} \Big( \int_{0}^{s} P_{m}(t) \, \diff{t} + 2^{-m} C_m (1-s)^{m+1} \Big) P_{n-m}(s) \text{,}
\end{aligned} \right.
\end{equation}
for any nonnegative integer $n$, where $(C_{n})_{n=0}^{\infty}$ is the sequence of Catalan numbers.

\begin{remark}
The sequence $(m_{n})_{n=0}^{\infty}$ has also a combinatorial interpretation (cf.~\cite[Theorem~5.3]{Dacko-2020}), which served as its definition there. The equivalence of these definitions is the result of~\cite[Theorems~6.13 and~7.5]{Dacko-2020}.

The definitions of $(P_n)_{n=0}^{\infty}$, given in~\eqref{equation:VmonCLT-evenMoments-polynomialsRecursion} and~\cite[Definition~7.4]{Dacko-2020}, are equivalent. Moreover, $\lvert P_{n}(x) \rvert \leq C_{n}$, for each $n$. Both the facts follow from the proof of~\cite[Proposition~7.6]{Dacko-2020}. It was also proved in~\cite[Theorems~6.13 and~7.5]{Dacko-2020} that $(m_{n})_{n=0}^{\infty}$ forms the moment sequence of a probability measure on $\RR$.
\end{remark}

First, let us recall the V-monotone central limit theorem~\cite[Theorem~5.3]{Dacko-2020}.
\begin{theorem}
Let $(\mathcal{A}, \varphi)$ be a $C^{*}$-probability space and let $(a_{i})_{i=1}^{\infty}$ be a family of self-adjoint, V-monotonically independent, and identically distributed random variables with mean $0$ and variance $1$. Then
\begin{equation*}
\frac{a_1 + \cdots + a_N}{\sqrt{N}} \weaklyConvergesTo \omega \text{,}
\end{equation*}
where $\omega$ is a random variable with the \emph{standard V-monotone Gaussian distribution} given by the moment sequence $(m_{n})_{n=0}^\infty$.
\end{theorem}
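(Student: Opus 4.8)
The plan is to prove the theorem by the method of moments, following the combinatorial analysis of~\cite{Dacko-2020}. Put $S_N = a_1 + \cdots + a_N$. By the remark above one has $\lvert m_n \rvert \leq C_n \leq 4^n$ for every $n$, so $(m_n)_{n=0}^{\infty}$ is the moment sequence of a \emph{unique} compactly supported probability measure on $\RR$ (its Carleman condition holds trivially), and pointwise convergence of all moments implies weak convergence; hence it suffices to show that $\varphi\bigl( (S_N/\sqrt{N})^{n} \bigr) \to m_n$ as $N \to \infty$ for each $n \in \NN$, which will give $\tfrac{S_N}{\sqrt N} \weaklyConvergesTo \omega$. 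To this end I would expand
\begin{equation*}
\varphi\bigl( (S_N/\sqrt{N})^{n} \bigr) = \frac{1}{N^{n/2}} \sum_{i_1, \dots, i_n = 1}^{N} \varphi(a_{i_1} \cdots a_{i_n})
\end{equation*}
and use that, by V-monotone independence together with identical distribution, each mixed moment $\varphi(a_{i_1} \cdots a_{i_n})$ is a universal quantity depending only on the \emph{pattern} of the tuple $(i_1, \dots, i_n)$: which entries coincide and in what order the distinct index values lie.

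The first step is to discard the negligible tuples. Since $\varphi(a_i) = 0$, the V-monotone factorization rules, which let an index that is a strict local extremum of the tuple be pulled out carrying its own mean (exactly as in the monotone and antimonotone cases), force $\varphi(a_{i_1} \cdots a_{i_n}) = 0$ whenever some index value occurs only once. Thus only tuples in which every value is repeated survive; such a tuple with $k$ distinct values has $k \leq \lfloor n/2 \rfloor$ and there are $O(N^{k})$ of them, so the terms with $k < n/2$ contribute $O(N^{-1})$. This gives $m_n = 0$ for odd $n$, and for even $n = 2p$ reduces the limit to the tuples in which every value occurs exactly twice, i.e. to those governed by a pair partition $\sigma$ of $\{1, \dots, n\}$.

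The second step is the surviving count. The V-monotone rules force the contribution of $\sigma$ to vanish unless $\sigma$ is noncrossing, and for a noncrossing pair partition the contributing tuples are precisely the \emph{V-monotonically labeled} ones --- the distinct values attached to the blocks must respect the V-shaped order constraint dictated by the nesting structure of $\sigma$, which is the class of partitions appearing in~\cite[Theorem~5.3]{Dacko-2020}. For each such $\sigma$ the number of admissible labelings by values in $\{1, \dots, N\}$ is asymptotically $c_\sigma N^{p}$ with $c_\sigma \in \QQ$, so $\lim_{N} \varphi\bigl( (S_N/\sqrt{N})^{2p} \bigr) = \sum_{\sigma} c_\sigma$. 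It then remains to identify this sum with $P_{p}(1)$: cutting a V-monotonically labeled noncrossing pair partition along the block $\{1, \ell\}$ containing the first point separates the part nested inside it from the part lying to its right, and recording the relative rank of the global minimal label by a continuous parameter $s \in [0, 1]$ turns the recursive enumeration into exactly the recursion~\eqref{equation:VmonCLT-evenMoments-polynomialsRecursion}, in which $\int_0^s P_m(t)\,\diff{t}$ collects the genuinely V-shaped (inner) configurations while $2^{-m} C_m (1-s)^{m+1}$, built from the Catalan numbers, collects the purely monotone (boundary) ones; evaluating at $s = 1$ yields $\sum_\sigma c_\sigma = P_p(1) = m_{2p}$, as in~\eqref{equation:VmonCLT-moments}.

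I expect this last identification --- matching the asymptotics of the integer labeling counts with the analytic recursion for $(P_n)_{n=0}^{\infty}$ --- to be the main obstacle, and the reason the argument is substantially harder than in the classical, free, Boolean, or even the monotone case: there the surviving pair partitions carry plain constant weights obeying a one-line recursion, whereas here the asymmetric V-shape couples the descending and the ascending portions of an index sequence, no closed recursion for $\sum_\sigma c_\sigma$ is available, and one is forced to enrich the bookkeeping by the rank parameter $s$, which is exactly why the limiting moments come out as the values $P_p(1)$ of a nontrivial polynomial family. Setting up the precise correspondence between the combinatorics of V-monotonically labeled noncrossing pair partitions and~\eqref{equation:VmonCLT-evenMoments-polynomialsRecursion}, together with the bound $\lvert P_n \rvert \leq C_n$ that secures determinacy of the moment problem, is the technical core; the $C^{*}$-algebraic input reduces to the elementary remark that uniformly bounded moments make the limiting linear functional a state on $\RR[x]$, hence represented by a compactly supported probability measure on $\RR$.
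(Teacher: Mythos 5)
First, note that this theorem is not proved in the present paper at all: it is recalled verbatim from~\cite[Theorem~5.3]{Dacko-2020} (together with~\cite[Theorems~6.13 and~7.5]{Dacko-2020} for the equivalence of the combinatorial and recursive descriptions of $(m_n)_{n=0}^{\infty}$), so there is no internal proof to compare yours against. Your outline does reproduce the correct global strategy of the cited argument — method of moments, reduction to pair partitions, survival of exactly the V-monotonically labeled noncrossing pair partitions with asymptotic densities $c_\sigma$, and finally the identification of $\sum_{\sigma} c_\sigma$ with $P_p(1)$ — and your determinacy step ($\lvert m_n \rvert \leq C_n \leq 4^n$, hence a unique compactly supported limit law, so moment convergence implies weak convergence) is sound.

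However, as a proof the proposal has a genuine gap, and you flag it yourself. Every step that actually carries the weight is asserted rather than derived from the axioms of V-monotone independence: (i) that a singleton index can be ``pulled out carrying its own mean'' — the V-monotone factorization rules are not those of monotone or antimonotone independence, and this vanishing needs to be checked against the actual defining relations; (ii) that crossing pair partitions contribute zero and that the admissible labelings of a noncrossing $\sigma$ number asymptotically $c_\sigma N^{p}$; and above all (iii) that $\sum_{\sigma} c_\sigma = P_p(1)$, i.e. that your rank-parameter bookkeeping really produces the recursion~\eqref{equation:VmonCLT-evenMoments-polynomialsRecursion}, with $\int_0^s P_m(t)\,dt$ and $2^{-m}C_m(1-s)^{m+1}$ accounting for the two kinds of configurations you describe. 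Point (iii) is precisely the content of~\cite[Theorems~6.13 and~7.5]{Dacko-2020}, is described in the present paper as ``rather involved,'' and your proposal explicitly defers it as ``the technical core.'' Until (i)--(iii) are carried out, what you have is a correct plan for the proof, not a proof.
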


The distribution $\mu$ is called standard because it has mean zero and variance one. Since all odd moments vanish, $\mu$ is symmetric with respect to $0$. Its support is compact and contained in $[-2, 2]$, since its moments are dominated by the respective moments of the standard Wigner distribution.

\begin{definition}
The moment generating function of $\mu$ is given by the power series
\begin{equation}
\label{equation:vMonotoneGaussianRealMGFSeries}
\theFunctionMmu(z) = \sum_{n=0}^{\infty} m_{n} z^{n} \text{.}
\end{equation}
The Cauchy--Stieltjes transform and the reciprocal Cauchy--Stieltjes transform of this measure will be denoted by $\theFunctionGmu$ and $\theFunctionFmu$, respectively. By the definition, $\theFunctionFmu = 1 / \theFunctionGmu$.
\end{definition}

We temporarily consider $\theFunctionMmu$, $\theFunctionGmu$ and $\theFunctionFmu$ for real arguments, that is the restrictions of these functions to a suitable subsets of $\RR$. To investigate the Cauchy--Stieltjes transform, let us introduce two auxiliary functions. It was shown in~\cite{Dacko-2020} that the moment generating function of the limit distribution is expressed in terms of the function $\theRealValuedT$ given below.

Let $\theRealValuedT \colon [0, \infty) \mapsto \RR$ be given by
\begin{equation}
\label{equation:theRealVersionOfT}
\theRealValuedT(s) = -\frac{\sqrt{3}}{3} \bigg( \arctan \bigg( \frac{2s-1}{\sqrt{3}} \bigg) - \frac{\pi}{6} \bigg) - \frac{\realLogarithm( s^2 - s + 1 )}{2} \text{.}
\end{equation}
We can write this function in an integral form, namely
\begin{equation}
\label{equation:theRealVersionOfTInTheIntegralForm}
\theRealValuedT(s) = \int_{s}^{1} \dfrac{t}{t^{2} - t + 1} \, \diff{t} \text{,}
\end{equation}
for all $s \geq 0$. Indeed, simple calculations yield
\begin{equation*}
\begin{split}
\int_{s}^{1} \dfrac{t}{t^{2} - t + 1} \, \diff{t}
& = \frac{1}{2} \int_{s}^{1} \dfrac{2t - 1}{t^{2} - t + 1} \, \diff{t} + \frac{1}{2} \int_{s}^{1} \dfrac{1}{(t - \lfrac{1}{2})^2 + (\lfrac{\sqrt{3}}{2})^2} \, \diff{t} \\
& = - \frac{1}{2} \realLogarithm( s^2 - s + 1 ) -\frac{\sqrt{3}}{3} \bigg( \arctan \bigg( \frac{2s-1}{\sqrt{3}} \bigg) - \frac{\pi}{6} \bigg) = \theRealValuedT(s) \text{.}
\end{split}
\end{equation*}
The derivative of $\theRealValuedT$ is negative for $s \in \positiveRealHalfAxis \coloneqq (0, \infty)$, therefore $\theRealValuedT$ is invertible on $[0, \infty)$ and its range is $(-\infty, \theRealValuedT(0)]$, where $\theRealValuedT(0) = \lfrac{\sqrt{3} \pi}{9}$.

We define $\theInverseOfRealT \colon ( -\infty, \lfrac{\sqrt{3} \pi}{9} ] \to [0, \infty)$ as the inverse of $\theRealValuedT$. Let $\halfOfLogarithm \colon \RR \setminus [-\sqrt{2}, \sqrt{2}] \mapsto \RR$ be given by
\begin{equation}
\label{equation:halfOfLogarithmRealVersion}
\halfOfLogarithm(x) = \frac{1}{2} \realLogarithm \bigg( 1 + \dfrac{2}{x^2 - 2} \bigg) \text{}
\end{equation}
and let
\begin{equation}
\label{equation:partOfAnElementOfTheFrontierOfTheSupportOfTheVSGS}
\gamma_{0} = \frac{2}{e^{2\sqrt{3}\pi / 9} - 1} \text{.}
\end{equation}
We are now ready to write the formula for $\theFunctionFmu$ --- at this point only for real numbers.
\begin{theorem}
\label{proposition:vMonotoneGaussianRealCST}
The reciprocal Cauchy--Stieltjes transform of the standard V-monotone Gaussian measure $\mu$ is given by
\begin{equation}
\label{equation:vMonotoneGaussianRealCST}
\theFunctionFmu(x) = x \theInverseOfRealT ( W(x) ) \text{,}
\end{equation}
for $x \in \RR \setminus [-\sqrt{2+\gamma_{0}}, \sqrt{2+\gamma_{0}}]$.
\end{theorem}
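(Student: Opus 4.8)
The plan is to reduce the statement to an explicit closed form for the moment generating function $\theFunctionMmu$, since $\theFunctionGmu(w) = w^{-1}\theFunctionMmu(w^{-1})$ wherever the series \eqref{equation:vMonotoneGaussianRealMGFSeries} converges at $w^{-1}$ (its radius of convergence is at least $1/2$, because $\operatorname{supp}\mu \subseteq [-2,2]$), and $\theFunctionFmu = 1/\theFunctionGmu$. To find $\theFunctionMmu$ I would pass to the two-variable generating function $P(s,z) = \sum_{n\geq 0} P_n(s)\, z^n$, which converges for $|z| < 1/4$ uniformly in $s\in[0,1]$ thanks to $|P_n(s)| \leq C_n$. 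Multiplying the recursion \eqref{equation:VmonCLT-evenMoments-polynomialsRecursion} by $z^{n+1}$ and summing over $n\geq 0$, the right-hand side becomes a product of power series; with the Catalan series $\sum_n C_n u^n = (1-\sqrt{1-4u})/(2u)$ and the elementary identity $z(1-s)\sum_n C_n\big(\tfrac{z(1-s)}{2}\big)^n = 1 - \sqrt{1-2z(1-s)}$, this yields the functional equation
\begin{equation*}
P(s,z) = \frac{1}{\sqrt{1 - 2z(1-s)} \;-\; z \int_0^s P(t,z)\, \diff{t}} \text{.}
\end{equation*}

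Next I would convert this into an ordinary differential equation. Writing $Q(s,z) = \int_0^s P(t,z)\,\diff{t}$, so $\partial_s Q = P$ and $Q(0,z) = 0$, the functional equation reads $\partial_s Q = \big(\sqrt{1-2z(1-s)} - zQ\big)^{-1}$. Substituting first $\sigma = \sqrt{1-2z(1-s)}$ (so $\sigma$ runs from $\sqrt{1-2z}$ at $s=0$ to $1$ at $s=1$) and then $p = zQ/\sigma$ transforms this into the separable equation
\begin{equation*}
\frac{(1-p)\, \diff{p}}{p^2 - p + 1} = \frac{\diff{\sigma}}{\sigma} \text{,} \qquad p = 0 \ \text{ at } \ \sigma = \sqrt{1-2z} \text{.}
\end{equation*}
Integrating over $s\in[0,1]$ and substituting $q = 1-p$ on the left, the left-hand side becomes $\int_{1-p(1,z)}^{1}\tfrac{q}{q^2-q+1}\,\diff{q} = \theRealValuedT\big(1-p(1,z)\big)$ by the integral representation \eqref{equation:theRealVersionOfTInTheIntegralForm}, while the right-hand side is $-\tfrac12\realLogarithm(1-2z)$. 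Since $p(1,z) = z\,Q(1,z) = z\int_0^1 P(t,z)\,\diff{t}$ and the functional equation at $s=1$ gives $P(1,z) = \big(1 - z\int_0^1 P(t,z)\,\diff{t}\big)^{-1} = \big(1-p(1,z)\big)^{-1}$, it follows that $P(1,z) = 1/\theInverseOfRealT\big(-\tfrac12\realLogarithm(1-2z)\big)$, and hence, since $m_{2k}=P_k(1)$ and the odd moments vanish,
\begin{equation*}
\theFunctionMmu(z) = P(1,z^2) = \frac{1}{\theInverseOfRealT\big(-\tfrac12\realLogarithm(1-2z^2)\big)} \text{.}
\end{equation*}

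Finally I would read off $\theFunctionFmu$. For real $x$ with $|x| > 2$ the series converges at $x^{-1}$, so $\theFunctionGmu(x) = x^{-1}\theFunctionMmu(x^{-1}) = \big(x\,\theInverseOfRealT(-\tfrac12\realLogarithm(1-2x^{-2}))\big)^{-1}$, and the logarithm collapses via \eqref{equation:halfOfLogarithmRealVersion} to $-\tfrac12\realLogarithm(1-2x^{-2}) = \tfrac12\realLogarithm\big(1+\tfrac{2}{x^2-2}\big) = \halfOfLogarithm(x)$. Taking reciprocals, $\theFunctionFmu(x) = x\,\theInverseOfRealT(\halfOfLogarithm(x))$ for $|x|>2$. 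To pin down the range of validity, note $\theInverseOfRealT$ is defined exactly on $(-\infty,\theRealValuedT(0)] = (-\infty,\sqrt{3}\,\pi/9]$, and a short computation from the definition \eqref{equation:partOfAnElementOfTheFrontierOfTheSupportOfTheVSGS} of $\gamma_0$ shows $\halfOfLogarithm(x) \leq \sqrt{3}\,\pi/9$ is equivalent to $x^2 \geq 2+\gamma_0$; thus the right-hand side of \eqref{equation:vMonotoneGaussianRealCST} makes sense precisely for $x\in\RR\setminus[-\sqrt{2+\gamma_0},\sqrt{2+\gamma_0}]$. To push the identity from $|x|>2$ down to all such $x$, I would argue by analytic continuation: $\theFunctionFmu = 1/\theFunctionGmu$ and $x\mapsto x\,\theInverseOfRealT(\halfOfLogarithm(x))$ are real-analytic where defined and agree on $(2,\infty)$, and since the latter stays finite and nonzero throughout $(\sqrt{2+\gamma_0},\infty)$, the measure $\mu$ can put no mass there (a mass point would obstruct the analytic continuation of $\theFunctionGmu$, by Stieltjes inversion); hence $\theFunctionFmu$ is real-analytic on $(\sqrt{2+\gamma_0},\infty)$ and the two functions coincide there, and symmetrically on $(-\infty,-\sqrt{2+\gamma_0})$.

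I expect the real work to lie in two places. The first is the passage from the combinatorial recursion to the separable ODE and, above all, the recognition of its antiderivative as $\theRealValuedT$ through \eqref{equation:theRealVersionOfTInTheIntegralForm}: the substitutions $\sigma = \sqrt{1-2z(1-s)}$ and $p = zQ/\sigma$ are the non-obvious moves that make this collapse happen. The second, and more delicate, point is the domain bookkeeping --- justifying that the closed form valid for $|x|>2$ persists on all of $\RR\setminus[-\sqrt{2+\gamma_0},\sqrt{2+\gamma_0}]$, equivalently that $\operatorname{supp}\mu$ does not reach past $\pm\sqrt{2+\gamma_0}$ --- which is exactly where $\gamma_0$ enters and which has to be done without circular appeal to the extensions carried out in the later sections.
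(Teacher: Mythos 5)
Your proposal is correct in outline, and it reaches the theorem by a genuinely different route than the paper. For the closed form of the moment generating function you re-derive \eqref{equation:vMonotoneGaussianRealMGF} from the recursion \eqref{equation:VmonCLT-evenMoments-polynomialsRecursion} via the two-variable generating function, the functional equation $P(s,z)\big(\sqrt{1-2z(1-s)}-zQ(s,z)\big)=1$ and the separable ODE; this checks out (positivity of the $P_n$ on $[0,1]$ keeps every denominator nonzero for real $z\in[0,1/4)$, which suffices since $\theFunctionMmu(x)=P(1,x^2)$), but note that the paper does none of this: it simply cites Corollary~7.9 of~\cite{Dacko-2020}, so you are in effect re-proving that published result. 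The second half is where the arguments truly diverge. The paper observes that $\theInverseOfRealT$ has a branch point at $\theRealValuedT(0)=\sqrt{3}\pi/9$, so $\theFunctionMmu$ extends analytically to $\big(-(2+\gamma_{0})^{-1/2},(2+\gamma_{0})^{-1/2}\big)$ but through neither endpoint, and combines this with the fact that the moment generating function of a compactly supported measure has no singularities off $\RR$ to pin the radius of convergence of~\eqref{equation:vMonotoneGaussianRealMGFSeries} at exactly $1/\sqrt{2+\gamma_{0}}$; the identity $\theFunctionFmu(x)=x/\theFunctionMmu(1/x)$ then gives the statement, and Corollary~\ref{corollary:supportOfVmonotoneCLT} falls out of the same computation. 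You instead continue the closed form along the real axis and use Stieltjes inversion to exclude mass of $\mu$ beyond $\sqrt{2+\gamma_{0}}$; this works and also yields the support inclusion, though it does not deliver the sharpness (radius exactly $1/\sqrt{2+\gamma_{0}}$) that the paper gets for free. One spot needs tightening: your parenthetical ``a mass point would obstruct the analytic continuation'' is weaker than what you use, since any mass, not just an atom, must be excluded. The correct mechanism is that $x\mapsto\big(x\,\theInverseOfRealT(\halfOfLogarithm(x))\big)^{-1}$ is real-analytic and real-valued on $(\sqrt{2+\gamma_{0}},\infty)$, hence extends holomorphically to a complex neighborhood; by the identity theorem applied through $\CC_{+}$ it agrees with $\theFunctionGmu$ there, so $\Im\theFunctionGmu(\cdot+iy)\to 0$ locally uniformly on $(\sqrt{2+\gamma_{0}},\infty)$ and the inversion formula annihilates all of $\mu$ on that ray, after which the coincidence of $\theFunctionFmu$ with the closed form on the whole ray follows as you say.
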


\begin{proof}
The formula for the moment generating function, namely
\begin{equation}
\label{equation:vMonotoneGaussianRealMGF}
\dfrac{1}{\theFunctionMmu(x)} = \theInverseOfRealT \Bigg( \dfrac{1}{2} \realLogarithm \bigg( \dfrac{1}{1-2x^{2}} \bigg) \Bigg) \text{,}
\end{equation}
was proved in~\cite[Corollary~7.9]{Dacko-2020} for $x \in ( -\lfrac{1}{2}, \lfrac{1}{2} )$. Note that $\lfrac{1}{2} \cdot \realLogarithm \big( 1+\lfrac{2}{\gamma_{0}} ) = \lfrac{\sqrt{3} \pi}{9} = \theRealValuedT(0)$.
The function $\theInverseOfRealT$ is not analytically extendable through the point $\lfrac{\sqrt{3} \pi}{9}$. Indeed, consider the strip $\domainStripZero \coloneqq \{ u \in \CC : -\lfrac{ \sqrt{3} }{2} < \Im u < \lfrac{ \sqrt{3} }{2} \}$ and define $\certainBranchOfBoldT \colon \domainStripZero \to \CC$ as the analytic extension of $\theRealValuedT$, given by
\begin{equation*}
\certainBranchOfBoldT(u) = 
\int_{u}^{1} \dfrac{v}{v^{2} - v + 1} \, \diff{v} \text{,}
\end{equation*}
which is well defined, since the poles of the integrand do not belong to $\domainStripZero$. The point $\certainBranchOfBoldT(0) = \lfrac{\sqrt{3} \pi}{9}$ is a branch point of order two for $\inverseFunction{ \certainBranchOfBoldT }$, since $\certainBranchOfBoldT'(0) = 0$ and $0 \neq \certainBranchOfBoldT''(0) = -1$. Solving the inequality
\begin{equation*}
\dfrac{1}{2} \realLogarithm \bigg( \frac{1}{1-2x^{2}} \bigg) < \frac{\sqrt{3} \pi}{9} \text{}
\end{equation*}
with respect to $x \in (-2^{-1/2}, 2^{-1/2})$, we see that $\theFunctionMmu$ can be extended analytically to the interval $\big( -(2+\gamma_{0})^{-1/2}, (2+\gamma_{0})^{-1/2} \big)$, but not through any of its ends. Indeed, these ends correspond to $\theRealValuedT(0)$ (note that $e^{2\sqrt{3}\pi / 9} = \big(1 - 2 \cdot (2 + \gamma_{0})^{-1} \big)^{-1}$), through which $\theInverseOfRealT$ is not analytically extendable. It is well known that the moment generating function of a compactly supported probability measure on the real line has no singularities outside $\mathbbm{R}$, thus the radius of convergence of~\eqref{equation:vMonotoneGaussianRealMGFSeries} is equal to $1 / \sqrt{2+\gamma_{0}}$. Using the formula
\begin{equation*}
\theFunctionFmu(x) = \frac{x}{\theFunctionMmu(1/x)} \text{,}
\end{equation*}
we obtain our assertion.
\end{proof}

\begin{corollary}
\label{corollary:supportOfVmonotoneCLT}
The support of $\mu$ is compact and $\supp{\mu} \subseteq [-\sqrt{2+\gamma_{0}}, \sqrt{2+\gamma_{0}}]$.
\end{corollary}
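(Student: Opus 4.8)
The plan is to read the corollary off from the radius of convergence of the power series \eqref{equation:vMonotoneGaussianRealMGFSeries}, which has already been pinned down inside the proof of Theorem~\ref{proposition:vMonotoneGaussianRealCST}. Recall that we have already noted that $\mu$ is compactly supported, with $\supp\mu \subseteq [-2,2]$, because its even moments are dominated by those of the standard Wigner law. In particular $R \coloneqq \sup\{\, |x| : x \in \supp\mu \,\}$ is a well-defined finite number, and since $\supp\mu$ is closed it is automatically compact; so the whole statement reduces to the inequality $R \le \sqrt{2+\gamma_0}$.

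The first step is the elementary identity $\limsup_{n\to\infty} |m_n|^{1/n} = R$, valid for any compactly supported probability measure on $\RR$. The upper bound $\limsup_n |m_n|^{1/n} \le R$ is immediate from $|m_n| \le \int |x|^{n}\, \mu(\diff x) \le R^{n}$. For the reverse inequality, fix $\varepsilon > 0$. By definition of the supremum there is $x_0 \in \supp\mu$ with $|x_0| > R - \varepsilon$, and, $x_0$ lying in the support, the open set $\{\, x : |x| > R-\varepsilon \,\}$---which is a neighbourhood of $x_0$---has positive mass, say $c \coloneqq \mu(\{\, x : |x| > R-\varepsilon \,\}) > 0$. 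Since the odd moments vanish, for even $n$ we get $m_n = \int x^{n}\,\mu(\diff x) \ge (R-\varepsilon)^{n} c$, whence $\limsup_n |m_n|^{1/n} \ge R - \varepsilon$; letting $\varepsilon \downarrow 0$ yields $\limsup_n |m_n|^{1/n} \ge R$.

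Finally I would invoke the Cauchy--Hadamard formula together with the fact, established inside the proof of Theorem~\ref{proposition:vMonotoneGaussianRealCST}, that $\theFunctionMmu$ extends analytically precisely to $\big(-(2+\gamma_0)^{-1/2}, (2+\gamma_0)^{-1/2}\big)$ and that consequently the radius of convergence of \eqref{equation:vMonotoneGaussianRealMGFSeries} equals $1/\sqrt{2+\gamma_0}$. Comparing this with the previous step, $1/\sqrt{2+\gamma_0} = 1/\limsup_n |m_n|^{1/n} = 1/R$, hence $R = \sqrt{2+\gamma_0}$ and therefore $\supp\mu \subseteq [-\sqrt{2+\gamma_0}, \sqrt{2+\gamma_0}]$, as asserted. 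I do not expect a serious obstacle here: the only point requiring a little care is the lower bound $\limsup_n |m_n|^{1/n} \ge R$, which uses that the support is the smallest closed set of full measure, while the essential analytic input---the exact radius of convergence---has effectively been supplied already.
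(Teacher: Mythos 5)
Your proposal is correct and follows essentially the same route as the paper: the corollary is stated without a separate proof precisely because it is meant to be read off from the fact, established at the end of the proof of Theorem~\ref{proposition:vMonotoneGaussianRealCST}, that the radius of convergence of~\eqref{equation:vMonotoneGaussianRealMGFSeries} is $1/\sqrt{2+\gamma_0}$, combined with the already-noted compactness $\supp\mu\subseteq[-2,2]$ and the standard identity $\limsup_n |m_n|^{1/n}=\max\{|x|:x\in\supp\mu\}$. Your write-up merely makes this implicit deduction explicit (and in fact already yields the equality of supports announced after the corollary), so there is nothing to correct.
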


Later we will show that the above inclusion is in fact an equality.
Let $\CC_{+}$ and $\CC_{-}$ denote the sets $\{ z \in \CC : \Im z > 0 \}$ and $\{ z \in \CC : \Im z < 0 \}$, respectively. One of our main goals is to determine the Cauchy--Stieltjes transform $\theFunctionGmu$.

For technical reasons, it is not convenient to write an explicit formula for $\theFunctionGmu(z)$ for every $z \in \CC_{+}$. Since $\mu$ is symmetric with respect to $0$, we have
\begin{equation}
\label{equation:csTransformOfSymmetricMeasure}
\theFunctionGmu(z) = \frac{\mu ( \{ 0 \})}{z} + 2 z \int_{\positiveRealHalfAxis} \frac{\mu(\diff t)}{z^2 - t^2} \text{,}
\end{equation}
for $z \in \maximalDomainOfGandF$, therefore $\theFunctionGmu(-\complexAdjoint{z}) = - \complexAdjoint{\theFunctionGmu(z)}$. This allows us to determine $\theFunctionGmu$ only on
\begin{equation*}
\openedFirstQuadrant \coloneqq \{ z \in \CC : \Re z > 0 \text{ and } \Im z > 0 \} \text{.}
\end{equation*}
In view of~\eqref{equation:vMonotoneGaussianRealMGF}, we expect that
\begin{equation}
\label{equation:vMonotoneGaussianComplexCST}
\theFunctionFmu(z) = z \theInverseOfT \Bigg( \dfrac{1}{2} \principalLogarithm \bigg( 1 + \dfrac{2}{z^{2}-2} \bigg) \Bigg)
\end{equation}
at least for $z \in \openedFirstQuadrant$, where $\theInverseOfT$ is a suitable analytic extension of $\theInverseOfRealT$. We will prove the above formula by showing that `$\principalLogarithm$' here is the principal value of the logarithm, i.e.
\begin{equation}
\label{equation:theSuitableLogarithm}
\principalLogarithm z = \realLogarithm R + i \varphi \text{,} \qquad \text{for $z = R e^{i \varphi}$ with $R > 0$ and $-\pi < \varphi \leq \pi$,}
\end{equation}
which is analytic on $\CC \setminus (-\infty, 0 ]$,
and by finding $\theInverseOfT$ so that $\theFunctionFmu$ is analytic on $\openedFirstQuadrant$.

We start with extending $x \mapsto \halfOfLogarithm(x)$ defined in~\eqref{equation:halfOfLogarithmRealVersion} so that, for $z \in \openedFirstQuadrant$, the extension is equal to $z \mapsto \lfrac{1}{2} \principalLogarithm ( 1 + \lfrac{2}{(z^{2} - 2)} )$, which occurs in~\eqref{equation:vMonotoneGaussianComplexCST}.
\begin{proposition}
\label{proposition:domainCutStripANDhalfOfLogarithm}
The function $\halfOfLogarithm \colon (\CC_{+} \cup \RR) \setminus \{ -\sqrt{2}, 0, \sqrt{2} \} \mapsto \CC$, given by
\begin{equation}
\label{equation:halfOfLogarithm}
\halfOfLogarithm(z) = 
\begin{cases}
\dfrac{1}{2} \principalLogarithm \bigg( 1 + \dfrac{2}{z^{2} - 2} \bigg) & \text{if $z \notin (0, \sqrt{2})$,} \\
\dfrac{1}{2} \realLogarithm \bigg \lvert 1 - \dfrac{2}{z^{2} - 2} \bigg \rvert - \dfrac{i \pi}{2} & \text{if $z \in (0, \sqrt{2})$,}
\end{cases}
\end{equation}
is a continuous extension of $x \mapsto \halfOfLogarithm(x)$ which is analytic on $\CC_{+}$. Moreover, $\halfOfLogarithm( \openedFirstQuadrant ) = \{ w \in \CC : -\lfrac{\pi}{2} < \Im w < 0 \}$ and $\halfOfLogarithm \big( (\sqrt{2+\gamma_{0}}, \infty) \big) = ( 0, \lfrac{\sqrt{3} \pi}{9} )$.
\end{proposition}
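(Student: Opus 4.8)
The plan is to work throughout with $\zeta(z):=1+\dfrac{2}{z^{2}-2}=\dfrac{z^{2}}{z^{2}-2}$, which is the composition of the squaring map $z\mapsto z^{2}$ with the Möbius transformation $m(w):=\dfrac{w}{w-2}$, so that $\halfOfLogarithm(z)=\tfrac12\principalLogarithm\zeta(z)$ for every $z\notin(0,\sqrt2)$. To see that $\halfOfLogarithm$ is analytic on $\CC_{+}$, note that $\zeta$ is rational with poles only at $\pm\sqrt2\notin\CC_{+}$, so it is enough to check $\zeta(\CC_{+})\cap(-\infty,0]=\emptyset$. I would do this from the identity $\Im\zeta(z)=-2\,\Im(z^{2})/\lvert z^{2}-2\rvert^{2}$: it forces $\zeta(z)\in\RR$ only when $z\in\RR\cup i\RR$; on $i\RR$ one has $\zeta(it)=t^{2}/(t^{2}+2)\ge0$, while on $\RR$ one has $\zeta(x)=x^{2}/(x^{2}-2)\le0$ exactly for $x^{2}<2$, i.e.\ $x\in(-\sqrt2,\sqrt2)\subset\RR$. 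Hence $\zeta$ misses $(-\infty,0]$ on $\CC_{+}$, and $\halfOfLogarithm=\tfrac12\principalLogarithm\circ\zeta$ is analytic there.

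Next I would verify the extension and continuity claim. For $\lvert x\rvert>\sqrt2$ we have $\zeta(x)>1$, so $\halfOfLogarithm(x)=\tfrac12\realLogarithm(1+\tfrac{2}{x^{2}-2})$ coincides with the real function~\eqref{equation:halfOfLogarithmRealVersion}, and continuity of $\halfOfLogarithm$ at such an $x$ (also as a limit from $\CC_{+}$) is immediate since $\zeta$ stays in $\CC\setminus(-\infty,0]$ in a neighbourhood of $x$. The real work is at $x_{0}\in(-\sqrt2,\sqrt2)\setminus\{0\}$, where $\zeta(x_{0})<0$ lies on the cut of $\principalLogarithm$. For $z\to x_{0}$ inside $\CC_{+}$ one has $\Im(z^{2})=2(\Re z)(\Im z)$, of the same sign as $x_{0}$; by the formula for $\Im\zeta$ above this shows $\zeta(z)$ approaches the negative real axis from above if $x_{0}<0$ and from below if $x_{0}>0$, i.e.\ $\arg\zeta(z)\to\pi$, respectively $\arg\zeta(z)\to-\pi$. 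Thus $\halfOfLogarithm(z)\to\tfrac12\realLogarithm\lvert\zeta(x_{0})\rvert+\tfrac{i\pi}{2}=\tfrac12\principalLogarithm\zeta(x_{0})$ on $(-\sqrt2,0)$, agreeing with the first line of~\eqref{equation:halfOfLogarithm}, and $\halfOfLogarithm(z)\to\tfrac12\realLogarithm\lvert 1+\tfrac{2}{x_{0}^{2}-2}\rvert-\tfrac{i\pi}{2}$ on $(0,\sqrt2)$, agreeing with the second line. Since $\halfOfLogarithm$ restricted to $(-\sqrt2,\sqrt2)\setminus\{0\}\subset\RR$ is manifestly continuous, this gives that $\halfOfLogarithm$ is a continuous extension of its real version. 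I expect this branch-of-logarithm bookkeeping to be the main obstacle of the proof.

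For the image $\halfOfLogarithm(\openedFirstQuadrant)$ I would argue as follows. Since $\openedFirstQuadrant\cap(0,\sqrt2)=\emptyset$, we have $\halfOfLogarithm=\tfrac12\principalLogarithm\circ\zeta$ on $\openedFirstQuadrant$. The squaring map is a biholomorphism of $\openedFirstQuadrant$ onto $\CC_{+}$, and $m(w)=\tfrac{w}{w-2}$ carries $\RR\cup\{\infty\}$ onto itself (since $m(0)=0$, $m(1)=-1$, $m(\infty)=1$) and has $\Im m(i)=-\tfrac25<0$, hence maps $\CC_{+}$ biholomorphically onto $\CC_{-}$. Therefore $\zeta(\openedFirstQuadrant)=\CC_{-}$, and since $\principalLogarithm$ maps $\CC_{-}$ bijectively onto $\{w\in\CC:-\pi<\Im w<0\}$, we conclude $\halfOfLogarithm(\openedFirstQuadrant)=\{w\in\CC:-\tfrac{\pi}{2}<\Im w<0\}$.

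Finally, on $(\sqrt{2+\gamma_{0}},\infty)$ we have $\halfOfLogarithm(x)=\tfrac12\realLogarithm(1+\tfrac{2}{x^{2}-2})$, which is real and strictly decreasing in $x$, so it maps $(\sqrt2,\infty)$ continuously and bijectively onto $(0,\infty)$ and hence maps $(\sqrt{2+\gamma_{0}},\infty)$ onto $(0,\halfOfLogarithm(\sqrt{2+\gamma_{0}}))$. From $\gamma_{0}=2/(e^{2\sqrt3\pi/9}-1)$ we get $1+\tfrac{2}{\gamma_{0}}=e^{2\sqrt3\pi/9}$, so $\halfOfLogarithm(\sqrt{2+\gamma_{0}})=\tfrac12\realLogarithm e^{2\sqrt3\pi/9}=\tfrac{\sqrt3\pi}{9}$, which yields $\halfOfLogarithm\bigl((\sqrt{2+\gamma_{0}},\infty)\bigr)=(0,\tfrac{\sqrt3\pi}{9})$, as claimed.
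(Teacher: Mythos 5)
Your proof is correct and takes essentially the same route as the paper: show that the inner function $1+\tfrac{2}{z^{2}-2}$ avoids the cut $(-\infty,0]$ on $\CC_{+}$, compute the one-sided limits at $x_{0}\in(-\sqrt{2},\sqrt{2})\setminus\{0\}$ from the sign of its imaginary part, and then read off the two images (your M\"obius-decomposition argument for $\halfOfLogarithm(\openedFirstQuadrant)$ merely makes explicit what the paper asserts in one line). Note only that the limit you obtain on $(0,\sqrt{2})$, namely $\tfrac12\realLogarithm\bigl\lvert 1+\tfrac{2}{x_{0}^{2}-2}\bigr\rvert-\tfrac{i\pi}{2}$, is exactly the value computed in the paper's own proof; the minus sign inside the absolute value in the statement's second line is evidently a typo, so your "agreement with the second line" is with its intended form.
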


\begin{proof}
The proof of the first statement follows from elementary computations: for $z \in \CC \setminus \{ \pm \sqrt{2} \}$, let
\begin{equation}
\label{equation:halfOfLogarithmInnerFunction}
\halfOfLogarithmInnerFunction(z) = 1 + \frac{2}{z^2 - 2} \text{.}
\end{equation}
We have
\begin{equation*}
\halfOfLogarithmInnerFunction(x+iy) = 1 + \frac{2 (x^2 - y^2 - 2)}{(x^2 - y^2 - 2)^2 + 4 x^2 y^2} - \frac{4i x y}{(x^2 - y^2 - 2)^2 + 4 x^2 y^2} \text{.}
\end{equation*}
Since $\halfOfLogarithmInnerFunction(z) \in (-\infty, 0) \Leftrightarrow z \in (-\sqrt{2}, \sqrt{2}) \setminus \{ 0 \}$, it suffices to compute limits as $z = x + i y$ approaches $x_0 \in (-\sqrt{2}, \sqrt{2}) \setminus \{ 0 \}$ from above (note that $\halfOfLogarithmInnerFunction(z) \in \RR \Leftrightarrow \Re z = 0 \vee \Im z = 0$, $\lvert \Re z \rvert \neq \sqrt{2}$).
If $y > 0$ and $\lvert x - x_0 \rvert$ are small enough, then $\Im \halfOfLogarithmInnerFunction(x + i y) > 0$ and $\Im \halfOfLogarithmInnerFunction(x + i y) < 0$ for $x_0 < 0$ and $x_0 > 0$, respectively, and moreover $\Re \halfOfLogarithmInnerFunction(x + i y) < 0$. Therefore
\begin{equation*}
\lim \limits_{z \to x_0 + 0^{+} \cdot i} \frac{1}{2} \principalLogarithm \big( \halfOfLogarithmInnerFunction(z) \big) = 
\begin{cases}
\dfrac{1}{2} \realLogarithm \bigg ( \dfrac{2}{2 - x_{0}^{2}} - 1 \bigg ) + \dfrac{i \pi}{2} & \text{if $x_0 \in (-\sqrt{2}, 0)$,} \\
\dfrac{1}{2} \realLogarithm \bigg ( \dfrac{2}{2 - x_{0}^{2}} - 1 \bigg ) - \dfrac{i \pi}{2} & \text{if $x_0 \in (0, \sqrt{2})$,}
\end{cases}
\end{equation*}
and the continuity of the considered restriction of $\halfOfLogarithm$ is proved. Finally,
\begin{equation*}
\halfOfLogarithm( \openedFirstQuadrant ) = \frac{1}{2} \principalLogarithm (\CC_{-}) = \bigg\{ w \in \CC : -\frac{\pi}{2} < \Im w < 0 \bigg\} \text{}
\end{equation*}
and
\begin{equation*}
\halfOfLogarithm \big( (\sqrt{2+\gamma_{0}}, \infty) \big) = \frac{1}{2} \realLogarithm \big( (1, e^{2 \sqrt{3} \pi / 9}) \big) = \bigg( 0, \frac{\sqrt{3} \pi}{9} \bigg) \text{,}
\end{equation*}
which finishes the proof.
\end{proof}

Now, let us give the explicit form of the complex multivalued function $\theGAFunctionT$ whose branch is an analytic extension of $\theRealValuedT$. Since $\theGAFunctionT$ is expressed by means of two logarithms, we introduce two pairs of polar coordinates. Namely, let
\begin{equation*}
\label{equation:theUpperPoleOfTheIntegrand}
u_{0} = \frac{1+\sqrt{3}i}{2} \text{}
\end{equation*}
and let $u \in \CC \setminus \{ u_{0}, \complexAdjoint{u_{0}} \}$. We put
\begin{equation}
\label{equation:twoTypesOfPolarCoordinates}
r_{1} e^{i \varphi_{1}} = u - u_{0} \qquad \text{ and } \qquad r_{2} e^{i \varphi_{2}} = u - \complexAdjoint{u_{0}} \text{,}
\end{equation}
where $r_{1}$, $r_{2} > 0$ and $\varphi_{1}$, $\varphi_{2} \in \RR$.

\begin{proposition}
\label{proposition:theMultivaluedVersionOfT}
The function $\theRealValuedT$ given in~\eqref{equation:theRealVersionOfT} is a branch of a complex multivalued function $\theGAFunctionT$ given by
\begin{equation}
\label{equation:theMultivaluedVersionOfTinPolarCoordinates}
\theGAFunctionT(u) = \alpha - \beta \cdot (\realLogarithm r_{1} + i \varphi_{1}) - \complexAdjoint{\beta} \cdot (\realLogarithm r_{2} + i \varphi_{2}) \text{,}
\end{equation}
for $u \in \CC \setminus \{ u_{0}, \complexAdjoint{u_{0}} \}$, where $\alpha = -\lfrac{\sqrt{3} \pi}{9}$ and $\beta = \lfrac{(3 - \sqrt{3}i)}{6}$.
\end{proposition}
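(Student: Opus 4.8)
The plan is to trace everything back to a partial-fraction decomposition of the integrand in~\eqref{equation:theRealVersionOfTInTheIntegralForm} together with a careful choice of a logarithm branch. Since $t^{2} - t + 1 = (t - u_{0})(t - \complexAdjoint{u_{0}})$, computing residues gives
\[
\frac{t}{t^{2} - t + 1} = \frac{\beta}{t - u_{0}} + \frac{\complexAdjoint{\beta}}{t - \complexAdjoint{u_{0}}}, \qquad \beta = \frac{u_{0}}{u_{0} - \complexAdjoint{u_{0}}} = \frac{3 - \sqrt{3}\,i}{6},
\]
which is exactly the $\beta$ appearing in~\eqref{equation:theMultivaluedVersionOfTinPolarCoordinates}. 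For every real $u$, the segment of integration in~\eqref{equation:theRealVersionOfTInTheIntegralForm} keeps $t - u_{0}$ in the open lower half-plane and $t - \complexAdjoint{u_{0}}$ in the open upper half-plane; on each of these half-planes the principal branch $\principalLogarithm$ is analytic, hence $t \mapsto \beta \principalLogarithm(t - u_{0}) + \complexAdjoint{\beta}\, \principalLogarithm(t - \complexAdjoint{u_{0}})$ is a primitive of the integrand along $\RR$. Consequently
\[
\theRealValuedT(u) = \Bigl[ \beta \principalLogarithm(1 - u_{0}) + \complexAdjoint{\beta}\, \principalLogarithm(1 - \complexAdjoint{u_{0}}) \Bigr] - \beta \principalLogarithm(u - u_{0}) - \complexAdjoint{\beta}\, \principalLogarithm(u - \complexAdjoint{u_{0}}) \text{.}
\]

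Next I would evaluate the bracketed constant. One has $1 - u_{0} = \complexAdjoint{u_{0}}$ and $1 - \complexAdjoint{u_{0}} = u_{0}$, both lying on the unit circle, at arguments $-\pi/3$ and $\pi/3$ respectively, so the bracket equals $\beta \cdot (-i\pi/3) + \complexAdjoint{\beta} \cdot (i\pi/3) = \tfrac{i\pi}{3}(\complexAdjoint{\beta} - \beta) = \tfrac{i\pi}{3} \cdot \tfrac{\sqrt{3}\,i}{3} = -\tfrac{\sqrt{3}\pi}{9} = \alpha$. Finally, for real $u$ the numbers $u - u_{0}$ and $u - \complexAdjoint{u_{0}}$ have imaginary parts $-\sqrt{3}/2$ and $\sqrt{3}/2$, so their principal arguments lie in $(-\pi,0)$ and $(0,\pi)$; calling these $\varphi_{1}$, $\varphi_{2}$ and their moduli $r_{1}$, $r_{2}$ reproduces exactly the polar data of~\eqref{equation:twoTypesOfPolarCoordinates}, and $\principalLogarithm(u - u_{0}) = \realLogarithm r_{1} + i\varphi_{1}$, $\principalLogarithm(u - \complexAdjoint{u_{0}}) = \realLogarithm r_{2} + i\varphi_{2}$. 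Substituting,
\[
\theRealValuedT(u) = \alpha - \beta(\realLogarithm r_{1} + i\varphi_{1}) - \complexAdjoint{\beta}(\realLogarithm r_{2} + i\varphi_{2}) \text{,}
\]
i.e.\ $\theRealValuedT$ is precisely the single-valued branch of $\theGAFunctionT$ obtained by choosing $\varphi_{1} \in (-\pi,0)$ and $\varphi_{2} \in (0,\pi)$ in~\eqref{equation:theMultivaluedVersionOfTinPolarCoordinates}.

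The only real care needed is the branch bookkeeping: one must confirm that the integration path meets neither cut $(-\infty,0]$ of $\principalLogarithm(\,\cdot\, - u_{0})$ nor of $\principalLogarithm(\,\cdot\, - \complexAdjoint{u_{0}})$ (it does not, since $t - u_{0}$ and $t - \complexAdjoint{u_{0}}$ never hit the real axis for real $t$), and that the selection $\varphi_{1} \in (-\pi,0)$, $\varphi_{2} \in (0,\pi)$ is a legitimate branch of the a priori multivalued $\varphi_{1}, \varphi_{2}$. As an independent sanity check one can match the formula against the elementary expression~\eqref{equation:theRealVersionOfT}: the imaginary part vanishes since $r_{1} = r_{2} = \sqrt{u^{2} - u + 1}$ and $\varphi_{2} = -\varphi_{1}$ for real $u$, while the real part recovers~\eqref{equation:theRealVersionOfT} after using $\principalArgument(u - u_{0}) = -\pi/2 + \arctan\bigl((2u-1)/\sqrt{3}\bigr)$ — a routine arctangent identity.
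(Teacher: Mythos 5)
Your argument is correct, and it takes a slightly different route than the paper's own proof. The paper works directly from the closed-form expression~\eqref{equation:theRealVersionOfT}: it replaces $\arctan$ and $\realLogarithm$ by their multivalued complex counterparts, expresses both through $\multivaluedLogarithm(u-u_{0})$ and $\multivaluedLogarithm(u-\complexAdjoint{u_{0}})$, and collects terms to arrive at~\eqref{equation:theMultivaluedVersionOfT}, after which the polar substitution~\eqref{equation:twoTypesOfPolarCoordinates} gives~\eqref{equation:theMultivaluedVersionOfTinPolarCoordinates}. You instead start from the integral representation~\eqref{equation:theRealVersionOfTInTheIntegralForm}, split the integrand by partial fractions over the poles $u_{0}$, $\complexAdjoint{u_{0}}$ (your residue computation $\beta = u_{0}/(u_{0}-\complexAdjoint{u_{0}}) = (3-\sqrt{3}i)/6$ is correct), integrate with the principal logarithm --- legitimate since along the real axis $t-u_{0}$ and $t-\complexAdjoint{u_{0}}$ stay in the open lower and upper half-planes, off the cut of $\principalLogarithm$ --- and evaluate the endpoint constant using $1-u_{0}=\complexAdjoint{u_{0}}=e^{-i\pi/3}$, $1-\complexAdjoint{u_{0}}=u_{0}=e^{i\pi/3}$, which indeed yields $\alpha=-\sqrt{3}\pi/9$. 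Both proofs are elementary and of comparable length; the paper's rewriting is quicker but leaves the identification of the additive constant to unstated ``elementary computations,'' whereas your fundamental-theorem-of-calculus derivation produces the constant $\alpha$ automatically and, as a bonus, pins down explicitly which branch $\theRealValuedT$ is (the one with $\varphi_{1}\in(-\pi,0)$, $\varphi_{2}\in(0,\pi)$), a normalization that is implicitly reused later when the paper works with $\theFunctionT$ and $\theContinuedFunctionT$. Your closing sanity check against~\eqref{equation:theRealVersionOfT} (using $r_{1}=r_{2}$, $\varphi_{2}=-\varphi_{1}$ and $\principalArgument(u-u_{0})=-\pi/2+\arctan((2u-1)/\sqrt{3})$ for real $u$) is also correct, so no gap remains.
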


\begin{proof}
The proof is elementary: we first extend the functions occurring in~\eqref{equation:theRealVersionOfT} to the complex multivalued ones, getting
\begin{equation*}
\complexArctan{\bigg( \frac{2u-1}{\sqrt{3}} \bigg)} = \frac{\multivaluedLogarithm(u - u_{0}) - \multivaluedLogarithm(u - \complexAdjoint{u_{0}})}{2i} + \frac{\pi}{2} \text{}
\end{equation*}
and
\begin{equation*}
\multivaluedLogarithm(u^2 - u + 1) = \multivaluedLogarithm(u - u_{0}) - \multivaluedLogarithm(u - \complexAdjoint{u_{0}}) \text{,}
\end{equation*}
for $u \in \CC \setminus \{ u_{0}, \complexAdjoint{u_{0}} \}$, where $\complexArctan$ and $\multivaluedLogarithm$ are the multivalued inverse tangent function and logarithm, respectively. Elementary computations lead to
\begin{equation}
\label{equation:theMultivaluedVersionOfT}
\theGAFunctionT(u) = -\frac{\sqrt{3}\pi}{9} - \frac{3 - \sqrt{3}i}{6} \cdot \multivaluedLogarithm ( u - u_{0} ) - \frac{3 + \sqrt{3}i}{6} \cdot \multivaluedLogarithm ( u - \complexAdjoint{u_{0}} ) \text{}  
\end{equation}
Substitutions~\eqref{equation:twoTypesOfPolarCoordinates} finish the proof.
\end{proof}

\section{Function $\theFunctionT$}
\label{section:theFunctionT}
As it will be shown later, $\theFunctionFmu$ has the form
\begin{equation*}
\theFunctionFmu(z) = z \inverseFunction{T}(W(z)) \text{}
\end{equation*}
for $z \in \openedFirstQuadrant$, where $\theInverseOfT$ is the inverse of a suitable analytic extension of $\theRealValuedT$, denoted by $\theFunctionT$. We shall define $\theFunctionT$ in this section. We start with finding its domain, but in order to do so, we must first change coordinates. For our purposes, it suffices to parameterize only the upper half-plane $\CC_{+}$ (without one point). From now on, $\topologicalInterior{ A }$, $\topologicalClosure{ A }$ and $\partial A$ stand for the interior, closure and boundary of a set $A \subseteq \CC$, respectively.
\begin{definition}
\label{definition:changeOfCoordinates}
Introduce the following parametrization of $\CC_{+} \setminus \{ u_{0} \}$:
\begin{equation}
\label{equation:changeOfCoordinates0}
\begin{cases}
R = \lfrac{r_2}{r_1} \text{,} \\
\xi = \varphi_1 + \varphi_2 \text{,}
\end{cases}
\end{equation}
with $0 < r_1 < r_2$ and $\varphi_{1}, \varphi_{2} \in \RR$.
\end{definition}

It is also possible to parametrize $\CC_{-} \setminus \{ \complexAdjoint{ u_{0} } \}$ in a similar way, but we do not need it.

\begin{proposition}
\label{proposition:changeOfCoordinates}
For any $u \in \CC_{+} \setminus \{ u_{0} \}$, it holds that
\begin{equation*}
u = \theFunctionf(R, \xi) \text{,}
\end{equation*}
where $\theFunctionf \colon (1, \infty) \times \RR \to \CC$ is given by
\begin{equation}
\label{equation:changeOfCoordinates}
\theFunctionf(R, \xi) = \frac{\sqrt{3}i(R e^{i \xi} - 1)}{1 - R^2} + u_{0} \text{.}
\end{equation}
\end{proposition}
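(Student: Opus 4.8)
The plan is to invert the change of variables \eqref{equation:changeOfCoordinates0} explicitly. First I would shift coordinates, setting $v = u - u_0$; this is nonzero since $u \neq u_0$. Recalling that $u_0$ satisfies $u_0 + \complexAdjoint{u_0} = 1$, $u_0 \complexAdjoint{u_0} = 1$ and $u_0 - \complexAdjoint{u_0} = \sqrt{3}i$, one gets $u - \complexAdjoint{u_0} = v + \sqrt{3}i$, so the polar data \eqref{equation:twoTypesOfPolarCoordinates} read $r_1 e^{i\varphi_1} = v$ and $r_2 e^{i\varphi_2} = v + \sqrt{3}i$. Since also $r_1 e^{-i\varphi_1} = \complexAdjoint{v}$, the new coordinates can be rewritten as
\[
R e^{i\xi} = \frac{r_2}{r_1} e^{i(\varphi_1 + \varphi_2)} = \frac{r_2 e^{i\varphi_2}}{r_1 e^{-i\varphi_1}} = \frac{v + \sqrt{3}i}{\complexAdjoint{v}} \text{.}
\]

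Next I would close this into a solvable system. The identity $R e^{i\xi}\, \complexAdjoint{v} = v + \sqrt{3}i$ by itself mixes $v$ and $\complexAdjoint{v}$, so it is not enough; taking complex conjugates (here $R$, $\xi$ are real) gives the companion relation $R e^{-i\xi} v = \complexAdjoint{v} - \sqrt{3}i$. Substituting $\complexAdjoint{v} = R e^{-i\xi} v + \sqrt{3}i$ into the first relation and collecting the $v$-terms yields $(R^2 - 1)\, v = \sqrt{3}i\,(1 - R e^{i\xi})$, hence
\[
v = \frac{\sqrt{3}i\,(R e^{i\xi} - 1)}{1 - R^2} \text{,}
\qquad
u = v + u_0 = \frac{\sqrt{3}i\,(R e^{i\xi} - 1)}{1 - R^2} + u_0 = \theFunctionf(R, \xi) \text{,}
\]
which is the assertion.

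It then remains to check that every step is legitimate (and, along the way, that the constraint $0 < r_1 < r_2$ of Definition~\ref{definition:changeOfCoordinates} is indeed met). Since $u \in \CC_{+}$ and $\complexAdjoint{u_0} \in \CC_{-}$, we have $u \neq \complexAdjoint{u_0}$, so $r_2 > 0$ and $R = r_2/r_1$ is well defined; moreover $\CC_{+}$ is precisely the open half-plane of points strictly closer to $u_0$ than to $\complexAdjoint{u_0}$ — the perpendicular bisector of the segment joining $\complexAdjoint{u_0}$ and $u_0$, a vertical segment with midpoint $1/2$, is the real axis — so $r_1 < r_2$, i.e. $R > 1$; in particular $R^2 - 1 \neq 0$, which is exactly what licenses the division above. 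Finally $\varphi_1$, $\varphi_2$ (hence $\xi$) are only determined modulo $2\pi$, but $\theFunctionf(R, \xi)$ depends on $\xi$ solely through $e^{i\xi}$, so the formula is unambiguous.

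I do not anticipate a genuine obstacle here; the one point that requires care is that $R e^{i\xi}$ is not a holomorphic function of $u$ (it involves $\complexAdjoint{v}$), so one really does need the conjugated relation to solve for $v$ — trying to manipulate a single equation for $u$ directly would not work.
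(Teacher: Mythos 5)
Your proof is correct, and it takes a genuinely different route from the paper's. The paper stays in real coordinates: it rewrites the relation $r_2 e^{i\varphi_2} = r_1 e^{i\varphi_1} + \sqrt{3}i$ via trigonometric identities as a real $2\times 2$ linear system in $\tilde{s} = s - \lfrac{1}{2}$, $\tilde{t} = t - \lfrac{\sqrt{3}}{2}$ and solves it by Cramer's rule (using $R \neq 1$). You instead work complex-analytically: the single identity $R e^{i\xi} = (v + \sqrt{3}i)/\complexAdjoint{v}$ together with its complex conjugate gives a two-equation system in $v$ and $\complexAdjoint{v}$ that you eliminate directly, which is shorter and, as you note, unavoidable since $R e^{i\xi}$ is antiholomorphic in part. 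Your key identity is in fact exactly the inversion formula $R e^{i\xi} = (u - \complexAdjoint{u_0})/(\complexAdjoint{u} - \complexAdjoint{u_0}) = \inverseFunction{\theFunctionfZero}(u)$ that the paper only establishes later, in the proof of Lemma~\ref{lemma:changeOfCoordinates-Properties} (equation~\eqref{equation:inverseOfTheFunction_f0}), so your argument anticipates that step; a reader following your route gets the injectivity of $\theFunctionfZero$ essentially for free. Your explicit check that $R > 1$ (the real axis is the perpendicular bisector of the segment from $\complexAdjoint{u_0}$ to $u_0$, so points of $\CC_{+}$ are strictly closer to $u_0$) is also a welcome addition: the paper delegates this to Definition~\ref{definition:changeOfCoordinates}, while you verify that the hypothesis of that definition is actually satisfied for every $u \in \CC_{+} \setminus \{ u_{0} \}$, which is what licenses the division by $R^2 - 1$.
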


\begin{proof}
First, observe that~\eqref{equation:twoTypesOfPolarCoordinates} implies that
\begin{equation*}
r_2 e^{i \varphi_2} = u - u_{0} + (u_{0} - \complexAdjoint{ u_{0} }) = r_1 e^{i \varphi_1} + \sqrt{3} i \text{.}
\end{equation*}
Combining it with~\eqref{equation:changeOfCoordinates0}, we get the system
\begin{equation*}
\begin{cases}
R \cdot r_1 \cdot \cos(\xi - \varphi_1) = r_1 \cos \varphi_1 \text{,} \\
R \cdot r_1 \cdot \sin(\xi - \varphi_1) = r_1 \sin \varphi_1 + \sqrt{3} \text{,}
\end{cases}
\end{equation*}
which, using trigonometric identities, can be rewritten as
\begin{equation*}
\begin{cases}
(R \cos \xi - 1) \cdot r_1 \cos \varphi_1 + R \sin \xi \cdot r_1 \sin \varphi_1 = 0 \text{,} \\
R \sin \xi \cdot r_1 \cdot \cos \varphi_1 - (R \cos \xi + 1) \cdot  r_1 \sin \varphi_1 = \sqrt{3} \text{.}
\end{cases}
\end{equation*}
After substituting~\eqref{equation:twoTypesOfPolarCoordinates} and denoting $R e^{i \xi}$ and $r_1 e^{i \varphi_1} + u_{0}$ by $p + i q$ and $s + i t$, respectively, we get
\begin{equation*}
\begin{cases}
(p-1)(s - \lfrac{1}{2}) + q(t - \lfrac{\sqrt{3}}{2}) = 0 \text{,} \\
q(s - \lfrac{1}{2}) - (p+1)(t - \lfrac{\sqrt{3}}{2}) = \sqrt{3} \text{.}
\end{cases}
\end{equation*}
Since $R \neq 1$, we solve the above linear system with respect to $\tilde{s} = s - \lfrac{1}{2}$ and $\tilde{t} = t - \lfrac{\sqrt{3}}{2}$ by using Cramer's rule. This leads to
\begin{equation*}
\label{equation:changeOfCoordinates1}
s = \dfrac{-\sqrt{3} q}{1-R^2} + \dfrac{1}{2} \qquad \text{ and } \qquad t = \dfrac{\sqrt{3} (p-1)}{1-R^2} + \dfrac{\sqrt{3}}{2} \text{,}
\end{equation*}
and this is equivalent to~\eqref{equation:changeOfCoordinates}.
\end{proof}

The function $\theCurvedf$ defined below will be used to investigate the properties of $\theFunctionT$ --- a suitable branch of $\theGAFunctionT$, which is more natural than investigating these properties by means of $\theFunctionf$. Strictly speaking, the expression $\theFunctionT(\theCurvedf(\eta, \xi))$ will be studied.
\begin{definition}
\label{definition:theCurvedf}
Let $\theCurvedf \colon \domainMaximalTrapezoid \to \CC_{+} \setminus \{ u_{0} \}$, with
\begin{equation*}
\domainMaximalTrapezoid = \{ (\eta, \xi) \in \RR^2 : -\lfrac{3 \pi}{2} < \eta \leq \lfrac{\pi}{2} \text{ and } \xi < -\eta \} \text{,}
\end{equation*}
be given by
\begin{equation}
\label{equation:parametrizationOfGammaEta}
\theCurvedf(\eta, \xi) = \theFunctionf(e^{-\sqrt{3}(\eta + \xi)}, \xi) \text{.}
\end{equation}
\end{definition}

We now introduce the family of curves by means of which we will define the domain of $\theFunctionT$.
\begin{definition}
\label{definition:theFamilyOfLevelSetsOfIm}
Introduce a family of curves
\begin{equation}
\label{equation:theCurveGamma_eta}
\Gamma_\eta = \{ u = \theCurvedf(\eta, \xi) : \xi < -\eta \} \text{}
\end{equation}
parameterized by $\xi$, where $\eta \in \RR$, and let $\theFamilyOfLevelSetsOfImT = \{ \Gamma_{\eta} : -3 \pi / 2 < \eta \leq \pi / 2 \}$.
\end{definition}

\begin{remark}
We did not consider all $\eta \in \RR$ while defining $\theFamilyOfLevelSetsOfImT$ since for any $\eta \in \RR$ and any $k \in \ZZ$, it holds that
\begin{equation*}
\theCurvedf(\eta + 2 k \pi, \xi - 2 k \pi) = \theCurvedf(\eta, \xi) \text{}
\end{equation*}
and thus $\Gamma_{\eta + 2k \pi} = \Gamma_{\eta}$.
\end{remark}

Let us now give some properties of $\theCurvedf$ (cf.~Fig.~\ref{figure:hatDeltaSet}).
\begin{lemma}
\label{lemma:changeOfCoordinates-Properties}
The function $\theCurvedf$ is a continuous bijection and the inverse $\inverseFunction{ (\myRestriction{\theCurvedf}{\topologicalInterior{\domainMaximalTrapezoid}}) }$ is continuous. Moreover,
\begin{equation*}
\theCurvedf( \{ \lfrac{\pi}{2} \} \times (-\infty, -\lfrac{\pi}{2}) ) = \Gamma_{\lfrac{\pi}{2}} \qquad \text{ and } \qquad \theCurvedf(\topologicalInterior{\domainMaximalTrapezoid}) = \CC_{+} \setminus (\Gamma_{\pi / 2} \cup \{ u_{0} \}) \text{.}
\end{equation*}
The latter image is an open, simply connected set.
\end{lemma}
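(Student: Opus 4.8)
The strategy is to establish the four claims of Lemma~\ref{lemma:changeOfCoordinates-Properties} by reducing everything to the explicit formula~\eqref{equation:parametrizationOfGammaEta}, together with the already-proven Proposition~\ref{proposition:changeOfCoordinates} (which identifies $\theFunctionf$ as a parametrization of $\CC_{+} \setminus \{ u_{0} \}$). First I would record that $\theCurvedf$ is manifestly continuous on $\domainMaximalTrapezoid$, since it is the composition of the continuous map $(\eta, \xi) \mapsto (e^{-\sqrt{3}(\eta+\xi)}, \xi)$ (which sends $\domainMaximalTrapezoid$ into $(1, \infty) \times \RR$, because $\xi < -\eta$ forces $-\sqrt{3}(\eta+\xi) > 0$) with the continuous $\theFunctionf$ of~\eqref{equation:changeOfCoordinates}.

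For bijectivity I would argue as follows. Surjectivity onto $\CC_{+} \setminus \{ u_{0} \}$: given $u$ in the target, Proposition~\ref{proposition:changeOfCoordinates} produces $(R, \xi) \in (1, \infty) \times \RR$ with $u = \theFunctionf(R, \xi)$; since $\xi$ is only defined modulo $2\pi$ via~\eqref{equation:changeOfCoordinates0} and $\theFunctionf(R, \xi) = \theFunctionf(R, \xi + 2\pi)$, one may choose the representative $\xi$ so that, setting $\eta := -\xi - \tfrac{1}{\sqrt{3}} \realLogarithm R$ (the unique solution of $R = e^{-\sqrt{3}(\eta+\xi)}$), we land in the fundamental strip $-\tfrac{3\pi}{2} < \eta \leq \tfrac{\pi}{2}$; the constraint $R > 1$ is exactly $\eta + \xi < 0$, i.e. $\xi < -\eta$, so $(\eta, \xi) \in \domainMaximalTrapezoid$. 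Injectivity: if $\theCurvedf(\eta, \xi) = \theCurvedf(\eta', \xi')$, then comparing the two polar-coordinate data $r_1 e^{i\varphi_1}$, $r_2 e^{i\varphi_2}$ attached to the common point $u$ (these are determined up to a common shift of $\varphi_1, \varphi_2$ by a multiple of $2\pi$ since $r_1, r_2 > 0$), one gets $R = R'$ hence $e^{-\sqrt 3(\eta+\xi)} = e^{-\sqrt 3(\eta'+\xi')}$, and $\xi - \xi' \in 2\pi\ZZ$; writing $\xi' = \xi + 2k\pi$ forces $\eta' = \eta - 2k\pi$, and only $k = 0$ keeps both $\eta, \eta'$ in $(-\tfrac{3\pi}{2}, \tfrac{\pi}{2}]$.

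The two displayed image identities come next. The equality $\theCurvedf(\{\tfrac{\pi}{2}\} \times (-\infty, -\tfrac{\pi}{2})) = \Gamma_{\pi/2}$ is immediate from Definition~\ref{definition:theFamilyOfLevelSetsOfIm}: $\Gamma_\eta$ is \emph{defined} as $\{\theCurvedf(\eta,\xi) : \xi < -\eta\}$, and for $\eta = \tfrac{\pi}{2}$ the range of $\xi$ is precisely $(-\infty, -\tfrac{\pi}{2})$. For $\theCurvedf(\topologicalInterior{\domainMaximalTrapezoid}) = \CC_{+} \setminus (\Gamma_{\pi/2} \cup \{u_0\})$, note that $\topologicalInterior{\domainMaximalTrapezoid} = \{(\eta,\xi) : -\tfrac{3\pi}{2} < \eta < \tfrac{\pi}{2},\ \xi < -\eta\}$, so $\domainMaximalTrapezoid = \topologicalInterior{\domainMaximalTrapezoid} \sqcup (\{\tfrac{\pi}{2}\}\times(-\infty,-\tfrac{\pi}{2}))$ as a disjoint union; applying the bijection $\theCurvedf$ and the previous identity gives $\CC_{+}\setminus\{u_0\} = \theCurvedf(\topologicalInterior{\domainMaximalTrapezoid}) \sqcup \Gamma_{\pi/2}$, which rearranges to the claim. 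Continuity of the inverse on the interior: $\topologicalInterior{\domainMaximalTrapezoid}$ is open in $\RR^2$ and $\theCurvedf$ restricted to it is a continuous injection into $\CC \cong \RR^2$ with open image (by the previous sentence, once one checks the image is open — see below), so invariance of domain gives that $\myRestriction{\theCurvedf}{\topologicalInterior{\domainMaximalTrapezoid}}$ is a homeomorphism onto its image; alternatively one can exhibit the inverse explicitly from~\eqref{equation:changeOfCoordinates} and~\eqref{equation:parametrizationOfGammaEta} (solving for $R$ and $\xi$, then for $\eta$) and check continuity directly.

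Finally, that $\CC_{+} \setminus (\Gamma_{\pi/2} \cup \{u_0\})$ is open and simply connected: openness follows from invariance of domain applied to the continuous injection $\theCurvedf$ on the open set $\topologicalInterior{\domainMaximalTrapezoid}$ (this also retroactively justifies the homeomorphism claim above). Simple connectedness follows because $\theCurvedf$ restricted to $\topologicalInterior{\domainMaximalTrapezoid}$ is a homeomorphism and $\topologicalInterior{\domainMaximalTrapezoid}$ is convex, hence simply connected; a homeomorphic image of a simply connected space is simply connected. I expect the \textbf{main obstacle} to be the careful bookkeeping of the $2\pi\ZZ$-ambiguity in $\varphi_1, \varphi_2$ (and hence in $\xi$ and $\eta$) — making precise exactly which representatives are pinned down by the requirement $(\eta,\xi)\in\domainMaximalTrapezoid$, and verifying that the half-open constraint $-\tfrac{3\pi}{2} < \eta \leq \tfrac{\pi}{2}$ picks out a genuine fundamental domain so that bijectivity holds on the nose; the topological facts (invariance of domain, convexity $\Rightarrow$ simple connectedness) are then routine.
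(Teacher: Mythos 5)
Your overall architecture is sound and diverges from the paper's in an interesting way: the paper factors $\theCurvedf = \theFunctionfZero \circ \theFunctionh$ through the exterior $\domainExteriorOfTheDisk$ of the unit disk and proves each factor is a (suitably restricted) homeomorphism by hand --- explicit inversion of $\theFunctionfZero$ and a sequential limit-point argument for $\inverseFunction{\theFunctionh}$ on the interior --- whereas you work directly with the parametrization and then invoke invariance of domain to obtain openness of the image and continuity of the inverse in one stroke. That topological shortcut is legitimate and arguably cleaner than the paper's sequence chase; the convexity of $\topologicalInterior{\domainMaximalTrapezoid}$, the disjoint-union bookkeeping for the two image identities, and the surjectivity argument (choosing the representative of $\xi$ modulo $2\pi$ so that $\eta$ lands in the half-open fundamental strip) all match the paper in substance and are correct.

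The genuine gap is in your injectivity step. From $\theCurvedf(\eta,\xi)=\theCurvedf(\eta',\xi')=u$ you conclude ``$R=R'$ and $\xi-\xi'\in 2\pi\ZZ$'' by comparing the polar-coordinate data attached to $u$. This presupposes that \emph{any} pair $(R,\xi)$ with $R>1$ and $u=\theFunctionf(R,\xi)$ must coincide (modulo $2\pi$ in $\xi$) with the polar data $(\lfrac{r_2}{r_1},\,\varphi_1+\varphi_2)$ of $u$. Proposition~\ref{proposition:changeOfCoordinates} gives only the other direction: writing $\Phi$ for the coordinate assignment $u\mapsto(R,\xi \bmod 2\pi)$ of Definition~\ref{definition:changeOfCoordinates}, it proves $\theFunctionf\circ\Phi=\id$, not $\Phi\circ\theFunctionf=\id$, which is what you are using. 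Equivalently, you need the injectivity of $v\mapsto \lfrac{\sqrt{3}i(v-1)}{(1-|v|^2)}+u_0$ on $\{|v|>1\}$ (the paper's $\theFunctionfZero$); a priori the single complex equation $u=\theFunctionf(R,\xi)$ in the two real unknowns $(R,\xi)$ could admit several solutions with $R>1$. The paper closes exactly this point by solving $u=\theFunctionfZero(v)$ to get $v=(u-\complexAdjoint{u_0})/(\complexAdjoint{u}-\complexAdjoint{u_0})$. You should insert this short computation; once it is in place, your reduction to $\xi-\xi'\in 2\pi\ZZ$ and $\eta'-\eta=-(\xi'-\xi)$, followed by the fundamental-domain argument for $\eta$, goes through, and the same formula also supplies the explicit inverse you allude to as an alternative route to the continuity claim.
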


\begin{proof}
Consider the exterior of the unit disc
\begin{equation*}
\domainExteriorOfTheDisk = \{ v \in \CC : \lvert v \rvert > 1 \} \text{.}
\end{equation*}
In order to prove the lemma, let us represent $\theCurvedf$ as a composition
$\theCurvedf = \theFunctionfZero \circ \theFunctionh$, where $\theFunctionh \colon \domainMaximalTrapezoid \to \domainExteriorOfTheDisk$ and $\theFunctionfZero \colon \domainExteriorOfTheDisk \to \CC_{+} \setminus \{ u_{0} \}$ are defined by
\begin{equation}
\label{equation:definitionOfTheFunction_h}
\theFunctionh(\eta, \xi) = \exp \big( -\sqrt{3}(\eta + \xi) + i \xi \big) \text{}
\end{equation}
and by
\begin{equation}
\label{equation:definitionOfTheFunction_f0}
\theFunctionfZero(v) = \frac{\sqrt{3}i(v - 1)}{1 - \lvert v \rvert^2 } + u_{0} \text{,}
\end{equation}
respectively. Let us prove all necessary properties of $\theFunctionfZero$ and $\theFunctionh$. The function $\theFunctionfZero$ can be viewed as a version of $\theFunctionf$ with a complex argument (variable) if we write $v$ in polar coordinates.

\begin{figure}[ht]
\centering
\hspace*{\fill}
\begin{subfigure}{.3\textwidth}
    \centering
    \includegraphics[scale=0.15]{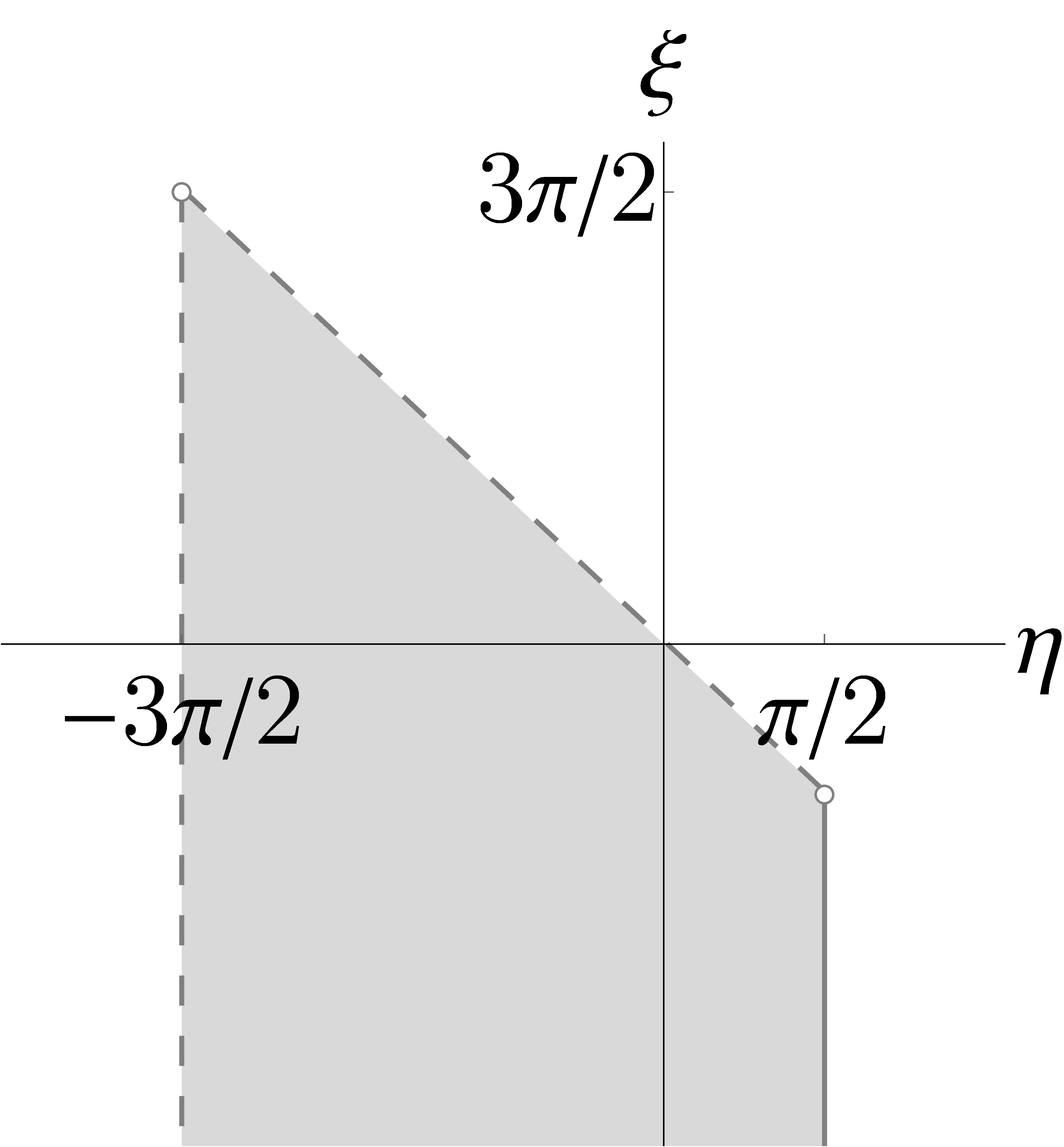}
    \label{subfigure:domainMaximalTrapezoid}
\end{subfigure} \hfill
\hspace*{\fill}
\begin{subfigure}{.3\textwidth}
    \centering
    \includegraphics[scale=0.15]{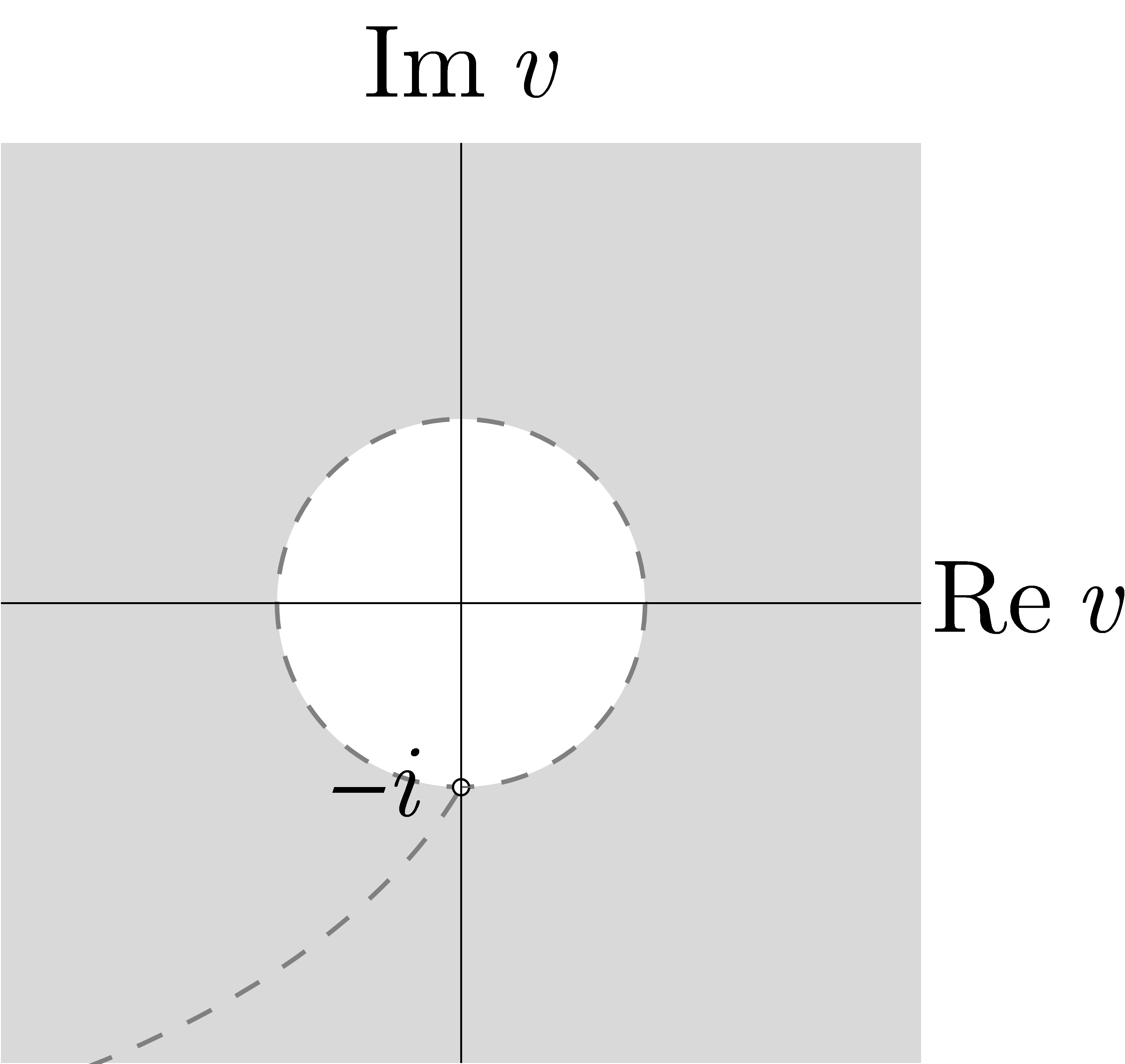}
    \label{subfigure:domainExteriorOfTheDisk}
\end{subfigure}
\hspace*{\fill}
\caption{The sets $\domainMaximalTrapezoid$ and $\domainExteriorOfTheDisk \setminus \Gamma$}
\label{figure:domainAndCodomainOfh}
\end{figure}

Let us deal with $\theFunctionh$ first, starting with injectivity. It is clear that if $(\eta_{1}, \xi_{1})$, $(\eta_{2}, \xi_{2}) \in \domainMaximalTrapezoid$
are such that $\theFunctionh(\eta_{1}, \xi_{1}) = \theFunctionh(\eta_{2}, \xi_{2})$, then
\begin{equation*}
\eta_{1} - \eta_{2} = \xi_{2} - \xi_{1} \text{}
\end{equation*}
and, since both $\xi_{1}$ and $\xi_{2}$ are arguments of $v$ (as a complex number), the $\rhs$ of the above equality is equal to $2l \pi$ for some integer $l$. Since $-2 \pi < \eta_{1} - \eta_{2} < 2 \pi$, we get $l=0$, i.e. $\theFunctionh$ is an injection. To prove its surjectivity, consider $\lvert v \rvert > 1$ and let $\eta = \eta_{0} - 2k \pi$ and $\xi = \xi_{0} + 2k \pi$, where
\begin{equation*}
\eta_{0} = -\frac{\realLogarithm \lvert v \rvert}{\sqrt{3}} - \principalArgument v \text{,} \qquad \xi_{0} = \principalArgument v \text{,} \qquad \text{and} \qquad k = \bigg \lceil \frac{\eta_{0}}{2 \pi} - \frac{1}{4} \bigg \rceil \text{.}
\end{equation*}
Here, $\principalArgument = \Im \principalLogarithm$ and, for $a \in \RR$, $\lceil a \rceil$ is the smallest integer which is $\geq a$. Note that $\eta + \xi = \eta_{0} + \xi_{0} < 0$. It suffices to demonstrate that $-\lfrac{3 \pi}{2} < \eta \leq \lfrac{\pi}{2}$ (i.e. $(\eta, \xi) \in \domainMaximalTrapezoid$) because it is clear that
\begin{equation*}
v = \exp \big( -\sqrt{3} (\eta_{0} + \xi_{0}) + i \xi_{0} \big) = \exp \big( -\sqrt{3} (\eta + \xi) + i \xi \big) \text{.}
\end{equation*}
But these inequalities follow directly from
\begin{equation*}
\frac{\eta_{0}}{2 \pi} - \frac{1}{4} \leq k < \frac{\eta_{0}}{2 \pi} + \frac{3}{4} \text{,}
\end{equation*}
which gives that $v = \theFunctionh(\eta, \xi)$ and the surjectivity of $\theFunctionh$. Its continuity is clear, i.e. it is a continuous bijection.
Let us show that $\inverseFunction{ (\myRestriction{\theFunctionh}{\topologicalInterior{ \domainMaximalTrapezoid}}) }$ is continuous. Consider a logarithmic spiral
\begin{equation}
\label{equation:theLogarithmicSpiral}
\Gamma = \{ v = \theFunctionh \big( \lfrac{\pi}{2}, \xi \big) : \xi \in \RR \} \text{.}
\end{equation}
Knowing that $\theFunctionh$ is a bijection, it is obvious that
\begin{equation*}
\theFunctionh( \domainMaximalOpenTrapezoid ) = \domainExteriorOfTheDisk \setminus \Gamma \text{.}
\end{equation*}
Let $(\eta_{n}, \xi_{n})_{n=1}^{\infty}$ be a sequence from $\domainMaximalOpenTrapezoid$ such that $v_{n} = \theFunctionh(\eta_{n}, \xi_{n}) \to \theFunctionh(\eta, \xi) = v$ with $(\eta, \xi) \in \domainMaximalOpenTrapezoid$. Let us show that $(\eta_{n}, \xi_{n}) \to (\eta, \xi)$. Consider any limit point $(\eta', \xi')$ of $(\eta_{n}, \xi_{n})_{n=1}^{\infty}$. We demonstrate that it belongs to $\domainMaximalOpenTrapezoid$ (to better understand the idea of this proof, see Fig.~\ref{figure:domainAndCodomainOfh}). If it held that $\xi' = -\infty$, then $v$ would be $\complexInfty$, which is impossible. It is also impossible that $\eta' \in \{ -\lfrac{3 \pi}{2}, \lfrac{\pi}{2} \}$, since this would imply that $v \in \Gamma$. If it held that $\xi' = -\eta'$, then $\lvert v \rvert$ would be $1$. Therefore, $(\eta', \xi') \in \domainMaximalOpenTrapezoid$ and $\theFunctionh(\eta', \xi') = \theFunctionh(\eta, \xi)$, therefore, since $\theFunctionh$ is an injection, $(\eta', \xi') = (\eta, \xi)$, and this implies that $(\eta_{n}, \xi_{n}) \to (\eta, \xi)$ and $\myRestriction{\theFunctionh}{\topologicalInterior{ \domainMaximalTrapezoid}}$ is a homeomorphism.
The function $\inverseFunction{ \theFunctionh }$ is discontinuous along $\Gamma \cap \domainExteriorOfTheDisk$. To show this, let $\xi < -\lfrac{\pi}{2}$ and put
\begin{equation*}
\domainMaximalTrapezoid \ni (\eta_{n}, \xi_{n}) = ( -\lfrac{3 \pi}{2} + \lfrac{1}{n}, \xi + 2 \pi - \lfrac{1}{n} ) \text{,} \qquad n \in \NN_{+} \text{.}
\end{equation*}
We have $v_{n} = \theFunctionh(\eta_{n}, \xi_{n}) \to \theFunctionh(\lfrac{\pi}{2}, \xi) \eqqcolon v$, but $\inverseFunction{ \theFunctionh }(v_{n}) \to (-\lfrac{3 \pi}{2}, \xi + 2 \pi) \neq (\lfrac{\pi}{2}, \xi)$.

Let us prove that $\theFunctionfZero$ is a homeomorphism. Consider the equation
\begin{equation*}
u = \theFunctionfZero(v) \text{}
\end{equation*}
of variables $v \in \domainExteriorOfTheDisk$ and $u \in \CC_{+} \setminus \{ u_0 \}$. Let us rewrite it as
\begin{equation}
\label{equation:invertingOf_f0}
u - u_0 = \frac{(u_0 - \complexAdjoint{ u_0 })(v - 1)}{1 - \lvert v \rvert^2 } \text{}
\end{equation}
and let us show
that it implies that
\begin{equation}
\label{equation:inverseOfTheFunction_f0}
v = \frac{u - \complexAdjoint{ u_0 }}{ \complexAdjoint{u} - \complexAdjoint{ u_0 }} \text{.}
\end{equation}
Indeed, recall that $\sqrt{3}i = u_{0} - \complexAdjoint{ u_{0} }$ and write
\begin{equation*}
u - \complexAdjoint{u_0} = u - u_0 + \sqrt{3}i = \frac{(u_{0} - \complexAdjoint{ u_{0} })(1 - \complexAdjoint{v})v}{1 - \lvert v \rvert^2} \text{.}
\end{equation*}
Dividing the above equation by the adjoint of~\eqref{equation:invertingOf_f0}, we obtain~\eqref{equation:inverseOfTheFunction_f0} and the injectivity of $\theFunctionfZero$. We have
\begin{equation*}
\lvert v \rvert > 1 \Leftrightarrow \lvert u - \complexAdjoint{ u_0 } \rvert > \lvert u - u_0 \rvert \Leftrightarrow u \in \CC_{+} \setminus \{ u_0 \} \text{,}
\end{equation*}
which implies that $\theFunctionfZero$ is a surjection. The continuity of $\theFunctionfZero$ and its inverse, given by~\eqref{equation:inverseOfTheFunction_f0}, is obvious.

We thus know that $\theCurvedf = \theFunctionfZero \circ \theFunctionh$ and that it is a continuous bijection. The equality
\begin{equation*}
\theCurvedf( \{ \lfrac{\pi}{2} \} \times (-\infty, -\lfrac{\pi}{2}) ) = \Gamma_{\lfrac{\pi}{2}}
\end{equation*}
follows directly from Definition~\ref{definition:theFamilyOfLevelSetsOfIm}. Of course,
\begin{equation*}
\theCurvedf( \domainMaximalOpenTrapezoid ) = \theFunctionfZero \big( \domainExteriorOfTheDisk \setminus \Gamma \big) = \CC_{+} \setminus (\Gamma_{\pi / 2} \cup \{ u_{0} \}) \text{.}
\end{equation*}
The set $\CC_{+} \setminus (\Gamma_{\pi / 2} \cup \{ u_{0} \})$ is open in $\CC$, since $(\Gamma_{\pi / 2} \cup \{ u_{0} \})$ is closed. The set $\domainMaximalOpenTrapezoid$ is simply connected and so is its image under $\theCurvedf$, which follows from the homeomorphicity of $\myRestriction{\theCurvedf}{\topologicalInterior{ \domainMaximalTrapezoid}}$, and the proof is complete.
\end{proof}

The family $\theFamilyOfLevelSetsOfImT$ has a very important property: every curve from $\theFamilyOfLevelSetsOfImT$ can be viewed as a `level-set' of $\Im \theGAFunctionT$ on $\CC_{+} \setminus \{ u_0 \}$ in the sense given below.
\begin{proposition}
\label{proposition:theLevelSetsOfT}
Let $-3 \pi / 2 < \eta \leq \pi / 2$ be fixed. For any continuous branch $\certainBranchOfBoldT$ of $\theGAFunctionT$ with a domain $\Delta_{\eta}$ such that $\Gamma_{\eta} \subseteq \Delta_{\eta} \subseteq \CC_{+} \setminus \{ u_{0} \}$, we have
\begin{equation*}
\Im \myRestriction{ \certainBranchOfBoldT }{\Gamma_{\eta}} \equiv \frac{\eta + 2k \pi}{2} \text{,}
\end{equation*}
for some $k \in \ZZ$.
\end{proposition}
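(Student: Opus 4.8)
The plan is to compute $\Im \theGAFunctionT$ in the polar coordinates~\eqref{equation:twoTypesOfPolarCoordinates}, evaluate it along $\Gamma_{\eta}$ by means of the defining relation of that curve, and then use a connectedness argument to identify the constant up to the integer $k$. Since $\Gamma_{\eta} \subseteq \CC_{+} \setminus \{ u_{0} \}$, the multivalued function $\theGAFunctionT$ is defined at every point of $\Gamma_{\eta}$, and from Proposition~\ref{proposition:theMultivaluedVersionOfT} --- recalling $\alpha = -\lfrac{\sqrt{3}\pi}{9} \in \RR$ and that $\Re \beta = \Re \complexAdjoint{\beta} = \lfrac{1}{2}$ --- taking imaginary parts gives
\[
\Im \theGAFunctionT(u) = -\tfrac{1}{2}(\varphi_{1} + \varphi_{2}) - \tfrac{\sqrt{3}}{6}\bigl(\realLogarithm r_{2} - \realLogarithm r_{1}\bigr) .
\]
The only multivaluedness on the right sits in the choice of the arguments $\varphi_{1}, \varphi_{2} \in \RR$, each determined modulo $2\pi$; since an independent shift $\varphi_{i} \mapsto \varphi_{i} + 2\pi$ changes the expression by $-\pi$, the value $\Im \theGAFunctionT(u)$ is a well-defined element of $\RR / \pi \ZZ$ for every $u \in \CC_{+} \setminus \{ u_{0} \}$.

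Next I would evaluate this on $\Gamma_{\eta}$. By Definition~\ref{definition:theFamilyOfLevelSetsOfIm}, a point $u \in \Gamma_{\eta}$ is of the form $u = \theCurvedf(\eta, \xi) = \theFunctionf\bigl(e^{-\sqrt{3}(\eta + \xi)}, \xi\bigr)$ for some $\xi < -\eta$; by Proposition~\ref{proposition:changeOfCoordinates} together with Definition~\ref{definition:changeOfCoordinates} this forces $\lfrac{r_{2}}{r_{1}} = e^{-\sqrt{3}(\eta + \xi)}$ and $\varphi_{1} + \varphi_{2} = \xi$ for an appropriate determination of the arguments, hence $\realLogarithm r_{2} - \realLogarithm r_{1} = -\sqrt{3}(\eta + \xi)$. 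Substituting into the formula above and using $\tfrac{\sqrt{3}}{6}\cdot \sqrt{3} = \tfrac{1}{2}$, the $\xi$-terms cancel and one obtains $\Im \theGAFunctionT(u) = \tfrac{\eta}{2}$ for that determination. Consequently $\Im \theGAFunctionT(u) \in \tfrac{\eta}{2} + \pi \ZZ$ for \emph{every} determination and \emph{every} $u \in \Gamma_{\eta}$.

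Finally I would invoke connectedness. Given a continuous branch $\certainBranchOfBoldT$ of $\theGAFunctionT$ on a domain $\Delta_{\eta}$ with $\Gamma_{\eta} \subseteq \Delta_{\eta} \subseteq \CC_{+} \setminus \{ u_{0} \}$, the map $u \mapsto \tfrac{1}{\pi}\bigl(\Im \certainBranchOfBoldT(u) - \tfrac{\eta}{2}\bigr)$ is continuous on $\Gamma_{\eta}$ and, by the previous paragraph, takes values in $\ZZ$. But $\Gamma_{\eta}$ is connected, being the continuous image of the interval $(-\infty, -\eta)$ under $\xi \mapsto \theCurvedf(\eta, \xi)$ (continuity of $\theCurvedf$ is part of Lemma~\ref{lemma:changeOfCoordinates-Properties}), so this integer-valued continuous map is constant, equal to some $k \in \ZZ$. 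Hence $\Im \myRestriction{\certainBranchOfBoldT}{\Gamma_{\eta}} \equiv \tfrac{\eta}{2} + k\pi = \tfrac{\eta + 2k\pi}{2}$, as claimed.

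The step I expect to be the main obstacle is the bookkeeping of the multivaluedness: one must be sure that the parameter $\xi$ appearing in $\theCurvedf(\eta, \xi)$ is a legitimate value of $\varphi_{1} + \varphi_{2}$ (which is precisely what the change of coordinates in Proposition~\ref{proposition:changeOfCoordinates} and Definition~\ref{definition:changeOfCoordinates} provides) and that $\Im \theGAFunctionT$ descends to a single-valued map into $\RR / \pi \ZZ$. Once this framework is in place, the identity $\Im \theGAFunctionT|_{\Gamma_{\eta}} \equiv \tfrac{\eta}{2}$ is a one-line cancellation and the passage to a fixed integer $k$ is immediate from connectedness of $\Gamma_{\eta}$; the arithmetic ($\Re \beta = \tfrac{1}{2}$, $\tfrac{\sqrt{3}}{6}\sqrt{3} = \tfrac{1}{2}$) is entirely routine.
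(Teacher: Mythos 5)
Your argument is correct and essentially identical to the paper's: both compute $\Im \theGAFunctionT$ from the polar-coordinate formula of Proposition~\ref{proposition:theMultivaluedVersionOfT}, identify $\lfrac{r_2}{r_1} = e^{-\sqrt{3}(\eta+\xi)}$ and $\varphi_1+\varphi_2 \equiv \xi \pmod{2\pi}$ along $\Gamma_\eta$, observe the cancellation yielding $\lfrac{\eta}{2}$ modulo $\pi\ZZ$, and fix the integer by continuity along the connected curve. The one place where the paper is slightly more explicit is the identification step, which it justifies via the formula for $\inverseFunction{\theFunctionfZero}$ from the proof of Lemma~\ref{lemma:changeOfCoordinates-Properties} (injectivity of $\theFunctionfZero$) rather than by Proposition~\ref{proposition:changeOfCoordinates} alone; your "appropriate determination of the arguments" is secured by exactly that lemma.
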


\begin{proof}
Fix $\eta \in (-3 \pi / 2, \pi / 2]$ and $u \in \Gamma_{\eta}$ and let $\certainBranchOfBoldT$ be a branch of $\theGAFunctionT$ as in the formulation of the proposition. Proposition~\ref{proposition:theMultivaluedVersionOfT} and elementary computations yield
\begin{equation}
\label{equation:theImaginaryPartOfT0}
\Im \certainBranchOfBoldT(u) = - \frac{1}{2} \cdot \bigg( \frac{1}{\sqrt{3}} \realLogarithm \frac{r_{2}}{r_{1}} - (\varphi_{1} + \varphi_{2}) \bigg) \text{,} \quad u \in \Delta_{\eta} \text{,}
\end{equation}
where $0 < r_{1} < r_{2}$ and $\varphi_{1}, \varphi_{2} \in \RR$ are such that $u - u_{0} = r_{1} \cdot e^{i \varphi_{1}}$ and $u - \complexAdjoint{ u_{0} } = r_{2} \cdot e^{i \varphi_{2}}$. Let $u \in \Gamma_{\eta}$. Of course,
\begin{equation*}
u - u_{0} = \frac{\sqrt{3}i (R e^{i \xi} - 1)}{1 - R^2}
\end{equation*}
for some $\xi < -\eta$, where $R = e^{-\sqrt{3}(\eta + \xi)} > 1$. Writing $u = \theFunctionfZero(R e^{i \xi})$, from the proof of Lemma~\ref{lemma:changeOfCoordinates-Properties} we get
\begin{equation}
\label{equation:invertingParametrization}
R \cdot e^{i \xi} = \inverseFunction{ \theFunctionfZero }(u) = \frac{ r_{2} }{ r_{1} } \cdot e^{i (\varphi_{1} + \varphi_{2})} \text{.}
\end{equation}
Of course, we also know from the mentioned proof that $u \in \CC_{+} \setminus \{ u_{0} \}$. Therefore,
\begin{equation*}
R = \frac{ r_{2} }{ r_{1} } \qquad \text{ and } \qquad \xi = \varphi_{1} + \varphi_{2} - 2 k \pi \text{,}
\end{equation*} 
for some $k \in \ZZ$. Using these equalities, we can rewrite~\eqref{equation:theImaginaryPartOfT0}, obtaining
\begin{equation*}
\Im \certainBranchOfBoldT(u) = -\frac{1}{2} \bigg( \frac{\realLogarithm R}{\sqrt{3}} + \xi - 2 k \pi \bigg) = \frac{\eta + 2k \pi}{2} \text{.}
\end{equation*}
The fact that $\Im \certainBranchOfBoldT$ is continuous along $\Gamma_{\eta}$ implies that $k$ does not depend on $\xi$, which finishes the proof.
\end{proof}

It follows from Lemma~\ref{lemma:changeOfCoordinates-Properties} that
\begin{equation*}
\bigcup_{-\lfrac{3 \pi}{2} < \eta \leq \lfrac{\pi}{2}} \Gamma_{\eta} = \theCurvedf(\domainMaximalTrapezoid) = \CC_{+} \setminus \{ u_{0} \} \text{,}
\end{equation*}
which is a disjoint sum. Using the curves from $\theFamilyOfLevelSetsOfImT$, we let us define a subset of $\CC_{+} \setminus \{ u_{0} \}$ which is a domain of a suitable branch of $\theGAFunctionT$, namely
\begin{equation}
\label{equation:domainOpenUpperShell}
\domainOpenUpperShell = \bigcup \limits_{-\pi < \eta < 0} \Gamma_{\eta} \text{.}
\end{equation}
Note that $\domainOpenUpperShell = \theCurvedf( \domainOpenTrapezoid )$, where
\begin{equation*}
\domainTrapezoid \coloneqq \{ (\eta, \xi) \in \RR^2 : -\pi \leq \eta \leq 0 \text{ and } \xi < -\eta \} \text{.}
\end{equation*}
From now on, we consider only $\myRestriction{\theCurvedf}{\domainTrapezoid}$ and we will simply write $\theCurvedf$ instead of $\myRestriction{\theCurvedf}{\domainTrapezoid}$.

\begin{figure}[ht]
\centering
    \includegraphics[scale=0.3]{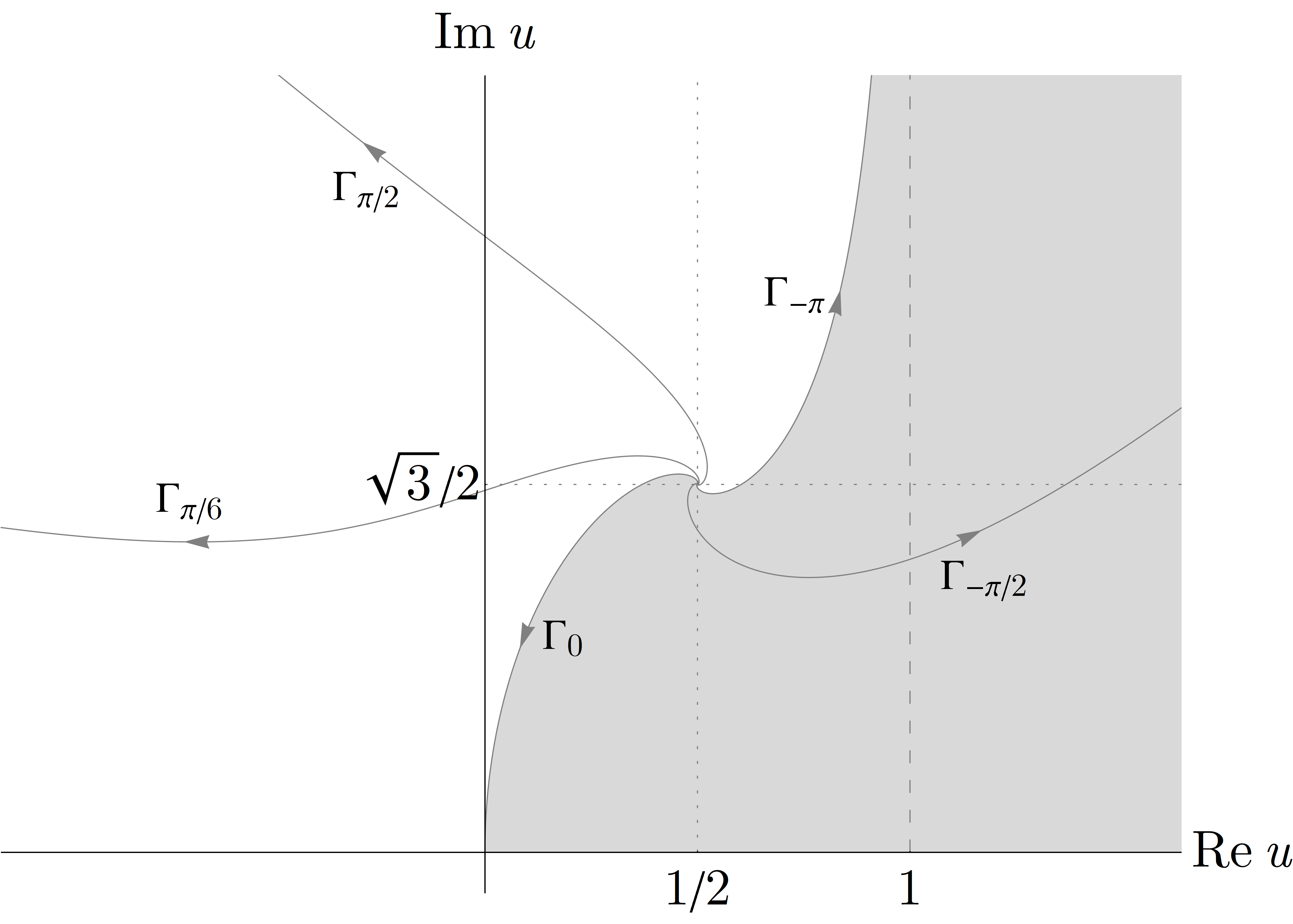}
\caption{
    The set $\domainOpenUpperShell$ bounded by $\Gamma_{0}$, $\{ u_{0} \}$, $\Gamma_{-\pi}$, and $[0, \infty)$, with sample curves $\Gamma_{-\lfrac{\pi}{2}} \subseteq \domainOpenUpperShell$ and $\Gamma_{\lfrac{\pi}{6}}$, which is disjoint with $\topologicalClosure{ \domainOpenUpperShell }$. The curve $\Gamma_{\lfrac{\pi}{2}} = \Gamma_{-\lfrac{3 \pi}{2}}$, also disjoint with $\domainOpenUpperShell$, appears in Lemma~\ref{lemma:changeOfCoordinates-Properties}. All the curves $\Gamma_{\eta}$ approach $u_{0}$ winding around it as $\xi \to -\infty$. The orientation of each $\Gamma_{\eta}$ (determined by its parametrization) is given by an arrow.
}
\label{figure:hatDeltaSet}
\end{figure}

\begin{remark}
We will not consider branches of $\theGAFunctionT$ defined on $\CC_{+} \setminus \{ u_{0} \}$, but outside $\domainOpenUpperShell$ because, due to Proposition~\ref{proposition:theLevelSetsOfT}, only a branch of $\theGAFunctionT$ with domain contained in $\domainOpenUpperShell$ (considering $\theGAFunctionT$ restricted to $\CC_{+}$ only) has its values in $\{ w \in \CC : -\pi / 2 < \Im w < 0 \}$.

Indeed, for any $\certainBranchOfBoldT$ as in Proposition~\ref{proposition:theLevelSetsOfT} with $\eta \notin (-\pi, 0)$ and any $u \in \Gamma_{\eta}$, we have $\Im \certainBranchOfBoldT(u) = \lfrac{\eta}{2} + k \pi$ for some $k \in \ZZ$, hence $\certainBranchOfBoldT(u) \notin \{ w \in \CC : -\pi / 2 < \Im w < 0 \}$.
\end{remark}

The fact that $\domainOpenUpperShell$ is a simply connected domain, proved below, proves the existence of a well-defined branch of $\theGAFunctionT$ with the domain $\domainOpenUpperShell$.
\begin{lemma}
\label{lemma:propertiesOfTheUpperShell}
The set $\domainOpenUpperShell$ is a simply connected domain with the boundary of the form
\begin{equation*}
\label{equation:boundaryOfTheUpperShell}
\partial \domainOpenUpperShell = \Gamma_{-\pi} \cup \{ u_{0} \} \cup \Gamma_{0} \cup [0, \infty) \text{.}
\end{equation*}
Moreover, $\theCurvedf(\domainTrapezoid) \subseteq \openedFirstQuadrant$.
\end{lemma}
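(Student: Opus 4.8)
My plan is to push everything through the coordinate map $\theCurvedf$, since Lemma~\ref{lemma:changeOfCoordinates-Properties} already controls it on $\domainMaximalTrapezoid$. For the first assertion, $\domainOpenTrapezoid=\{(\eta,\xi):-\pi<\eta<0,\ \xi<-\eta\}$ is the intersection of an open strip with an open half-plane, hence convex, so connected and simply connected, and $\domainOpenTrapezoid\subseteq\domainMaximalOpenTrapezoid$. By Lemma~\ref{lemma:changeOfCoordinates-Properties}, $\myRestriction{\theCurvedf}{\domainMaximalOpenTrapezoid}$ is a homeomorphism onto the open set $\CC_{+}\setminus(\Gamma_{\pi/2}\cup\{u_{0}\})$, so its restriction to the open subset $\domainOpenTrapezoid$ is a homeomorphism onto the open subset $\domainOpenUpperShell$ of $\CC$. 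Thus $\domainOpenUpperShell$ is an open, connected, simply connected set.

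Since $\domainOpenUpperShell$ is open, $\partial\domainOpenUpperShell=\topologicalClosure{\domainOpenUpperShell}\setminus\domainOpenUpperShell$, so I would compute $\topologicalClosure{\domainOpenUpperShell}$. Take a sequence $(\eta_{n},\xi_{n})\in\domainOpenTrapezoid$ with $w_{n}:=\theCurvedf(\eta_{n},\xi_{n})\to w\in\CC$; passing to a subsequence, $(\eta_{n},\xi_{n})$ converges in $[-\pi,0]\times[-\infty,\pi]$ to some $(\eta_{*},\xi_{*})$ (legitimate as $\xi_{n}<-\eta_{n}\le\pi$). If $\xi_{*}=-\infty$, then from~\eqref{equation:changeOfCoordinates} $w_{n}-u_{0}=\sqrt{3}i(R_{n}e^{i\xi_{n}}-1)/(1-R_{n}^{2})\to0$ since $R_{n}=e^{-\sqrt{3}(\eta_{n}+\xi_{n})}\to\infty$, so $w=u_{0}$. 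If $\xi_{*}\in\RR$ with $\xi_{*}<-\eta_{*}$, then $(\eta_{*},\xi_{*})\in\domainMaximalTrapezoid$ and continuity gives $w=\theCurvedf(\eta_{*},\xi_{*})$; since $w\notin\domainOpenUpperShell=\theCurvedf(\domainOpenTrapezoid)$ we have $(\eta_{*},\xi_{*})\in\partial\domainOpenTrapezoid$, and $\xi_{*}<-\eta_{*}$ forces $(\eta_{*},\xi_{*})$ onto $\{0\}\times(-\infty,0)$ or $\{-\pi\}\times(-\infty,\pi)$, so $w\in\Gamma_{0}\cup\Gamma_{-\pi}$. If $\xi_{*}=-\eta_{*}$ with $\eta_{*}\in[-\pi,0)$, then $R_{n}\to1$ while $R_{n}e^{i\xi_{n}}-1\to e^{i\xi_{*}}-1\neq0$ (as $\xi_{*}=-\eta_{*}\in(0,\pi]$), so $\lvert w_{n}\rvert\to\infty$, impossible. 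Conversely, every point of $\Gamma_{0}$ and of $\Gamma_{-\pi}$ arises as such a limit (approach the respective side at fixed $\xi$), and $u_{0}$ arises along $\xi\to-\infty$; none of them lies in $\domainOpenUpperShell$ (the first two because $\theCurvedf$ is injective on $\domainMaximalTrapezoid$ and those sides are disjoint from $\domainOpenTrapezoid$, and $u_{0}\notin\CC_{+}\setminus\{u_{0}\}\supseteq\domainOpenUpperShell$). Hence $\Gamma_{0}\cup\Gamma_{-\pi}\cup\{u_{0}\}\subseteq\partial\domainOpenUpperShell$.

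The remaining --- and key --- case is $\xi_{*}=-\eta_{*}=0$, i.e.\ $(\eta_{n},\xi_{n})\to(0,0)$, where $\theCurvedf$ has a $0/0$ indeterminacy. Writing $\varepsilon_{n}=-(\eta_{n}+\xi_{n})\to0^{+}$, so that $R_{n}=e^{\sqrt{3}\varepsilon_{n}}$ and $R_{n}e^{i\xi_{n}}=e^{\sqrt{3}\varepsilon_{n}+i\xi_{n}}$, expanding~\eqref{equation:changeOfCoordinates} to first order gives
\[
\theCurvedf(\eta_{n},\xi_{n})-u_{0}=\left(\frac{\xi_{n}}{2\varepsilon_{n}}-\frac{\sqrt{3}\,i}{2}\right)\bigl(1+o(1)\bigr)\text{.}
\]
Hence, if $w_{n}\to w\in\CC$, the real numbers $\xi_{n}/(2\varepsilon_{n})$ converge, which forces $\Im w=0$; and since $\xi_{n}=-\eta_{n}-\varepsilon_{n}$ with $-\eta_{n}>0$ we have $\xi_{n}/\varepsilon_{n}>-1$, whence $\Re w=\tfrac12+\lim\xi_{n}/(2\varepsilon_{n})\ge0$, i.e.\ $w\in[0,\infty)$. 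Conversely, for $r>0$ the sequence $(\eta_{n},\xi_{n})=\bigl(-\tfrac1n,\tfrac{2r-1}{2rn}\bigr)$ lies in $\domainOpenTrapezoid$ and has $\xi_{n}/\varepsilon_{n}\equiv2r-1$, so $\theCurvedf(\eta_{n},\xi_{n})\to r$; and $r=0$ is the limit of $\theCurvedf(0,\xi)$ as $\xi\to0^{-}$ (the endpoint of $\Gamma_{0}$). Since $[0,\infty)\cap\domainOpenUpperShell=\emptyset$ (as $\domainOpenUpperShell\subseteq\CC_{+}$), this yields $[0,\infty)\subseteq\partial\domainOpenUpperShell$, and collecting the cases, $\partial\domainOpenUpperShell=\Gamma_{-\pi}\cup\{u_{0}\}\cup\Gamma_{0}\cup[0,\infty)$. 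This corner analysis is the main obstacle; everything else is bookkeeping on top of Lemma~\ref{lemma:changeOfCoordinates-Properties}.

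Finally, for $\theCurvedf(\domainTrapezoid)\subseteq\openedFirstQuadrant$: $\theCurvedf$ takes values in $\CC_{+}$, so only $\Re\theCurvedf>0$ needs checking. A direct computation from~\eqref{equation:changeOfCoordinates} with $R=e^{-\sqrt{3}(\eta+\xi)}$ gives
\[
\Re\theCurvedf(\eta,\xi)=\frac12-\frac{\sqrt{3}\sin\xi}{2\sinh\bigl(\sqrt{3}(\eta+\xi)\bigr)}\text{,}
\]
and since $\eta+\xi<0$ on $\domainTrapezoid$ the denominator is negative, so $\Re\theCurvedf>0$ is equivalent to $\sinh\bigl(\sqrt{3}(\eta+\xi)\bigr)<\sqrt{3}\sin\xi$. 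For $\xi\ge0$ this is immediate, since then $0\le\xi<\pi$ and the left-hand side is $<\sinh0=0\le\sqrt{3}\sin\xi$; for $\xi<0$ it follows from $\sinh\bigl(\sqrt{3}(\eta+\xi)\bigr)\le\sinh(\sqrt{3}\xi)$ (because $\eta\le0$) together with $\sinh(\sqrt{3}\xi)<\sqrt{3}\sin\xi$, the latter because $\psi(\xi):=\sinh(\sqrt{3}\xi)-\sqrt{3}\sin\xi$ satisfies $\psi(0)=0$ and $\psi'(\xi)=\sqrt{3}\bigl(\cosh(\sqrt{3}\xi)-\cos\xi\bigr)\ge0$, vanishing only at $\xi=0$.
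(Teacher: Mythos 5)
Your proof is correct, and while it shares the paper's overall skeleton (simple connectedness via the homeomorphism of Lemma~\ref{lemma:changeOfCoordinates-Properties}, the boundary via a sequential computation of $\topologicalClosure{\domainOpenUpperShell}$, and a direct inequality for $\Re\theCurvedf>0$), the two technical steps are executed differently. For the closure, the paper proves the two-way equivalences (A)--(C), expressing $\Re u_n$, $\Im u_n$ through $\sin$, $\sinh$ and the ratio $\xi_n/\eta_n$, and it reuses these equivalences later (Lemmas~\ref{lemma:exactFormOfH} and~\ref{lemma:homeomorphicityOfH}, Theorem~\ref{theorem:vMonotoneStandardGaussianDistribution}); you instead isolate the only delicate case, the corner $(\eta,\xi)\to(0,0)$, and resolve it with the multiplicative expansion $\theCurvedf-u_0=\bigl(\xi/(2\varepsilon)-\sqrt{3}i/2\bigr)\bigl(1+o(1)\bigr)$, $\varepsilon=-(\eta+\xi)$, which checks out and cleanly shows that the corner contributes exactly $[0,\infty)$; your identification $\Re w=\tfrac12+\lim\xi_n/(2\varepsilon_n)$ is correct, and is in fact a more transparent bookkeeping of this limit than the paper's parametrization via $\xi_n/\eta_n$. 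The only cosmetic looseness is the point $r=0$, which you approach through $\Gamma_0$ rather than through $\domainOpenUpperShell$ itself; this is legitimate since you have already shown $\Gamma_0\subseteq\topologicalClosure{\domainOpenUpperShell}$ (or one can use, say, $(\eta_n,\xi_n)=(-1/n^2,-1/n)$ directly). For $\Re\theCurvedf>0$, the paper factors the quadratic in $R$ and proves $e^{-\sqrt{3}(\eta+\xi)}>\sqrt{3\sin^2\xi+1}-\sqrt{3}\sin\xi$ by a case analysis, whereas you reduce to $\sinh\bigl(\sqrt{3}(\eta+\xi)\bigr)<\sqrt{3}\sin\xi$ and settle it with the monotonicity of $\psi(\xi)=\sinh(\sqrt{3}\xi)-\sqrt{3}\sin\xi$; both are elementary, yours a bit shorter. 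The trade-off is that your minimal argument serves only this lemma, while the paper's (A)--(C) are an investment it cashes in in the subsequent sections.
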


\begin{proof}
Let $(\eta_{n}, \xi_{n})_{n=1}^{\infty}$ be a sequence of elements from $\domainTrapezoid$. For a positive integer $n$, set $u_{n} = \theCurvedf(\eta_{n}, \xi_{n})$. We start the proof by showing the following equivalences:
\begin{enumerate}
    \item[(A)] \label{item:A}
    $\xi_{n} \to -\infty \Leftrightarrow u_{n} \to u_{0}$,
    \item[(B)]
    $\lfrac{\xi_{n}}{\eta_{n}} \to (-1)^{+} \Leftrightarrow \lvert u_{n} \rvert \to \infty$
    \item[(C)]
    for any $s \geq 0$, $\big( (\eta_{n}, \xi_{n}) \to (0, 0) \wedge \lfrac{\xi_{n}}{\eta_{n}} \to \lfrac{1}{s} - 1 \big) \Leftrightarrow u_{n} \to s$ with the convention $\lfrac{1}{0} = \infty$.
\end{enumerate}
To better understand the idea of the proof, see Fig.~\ref{figure:domainTrapezoid} and Fig.~\ref{figure:hatDeltaSet}.

\begin{figure}[ht]
    \centering
    \includegraphics[scale=0.15]{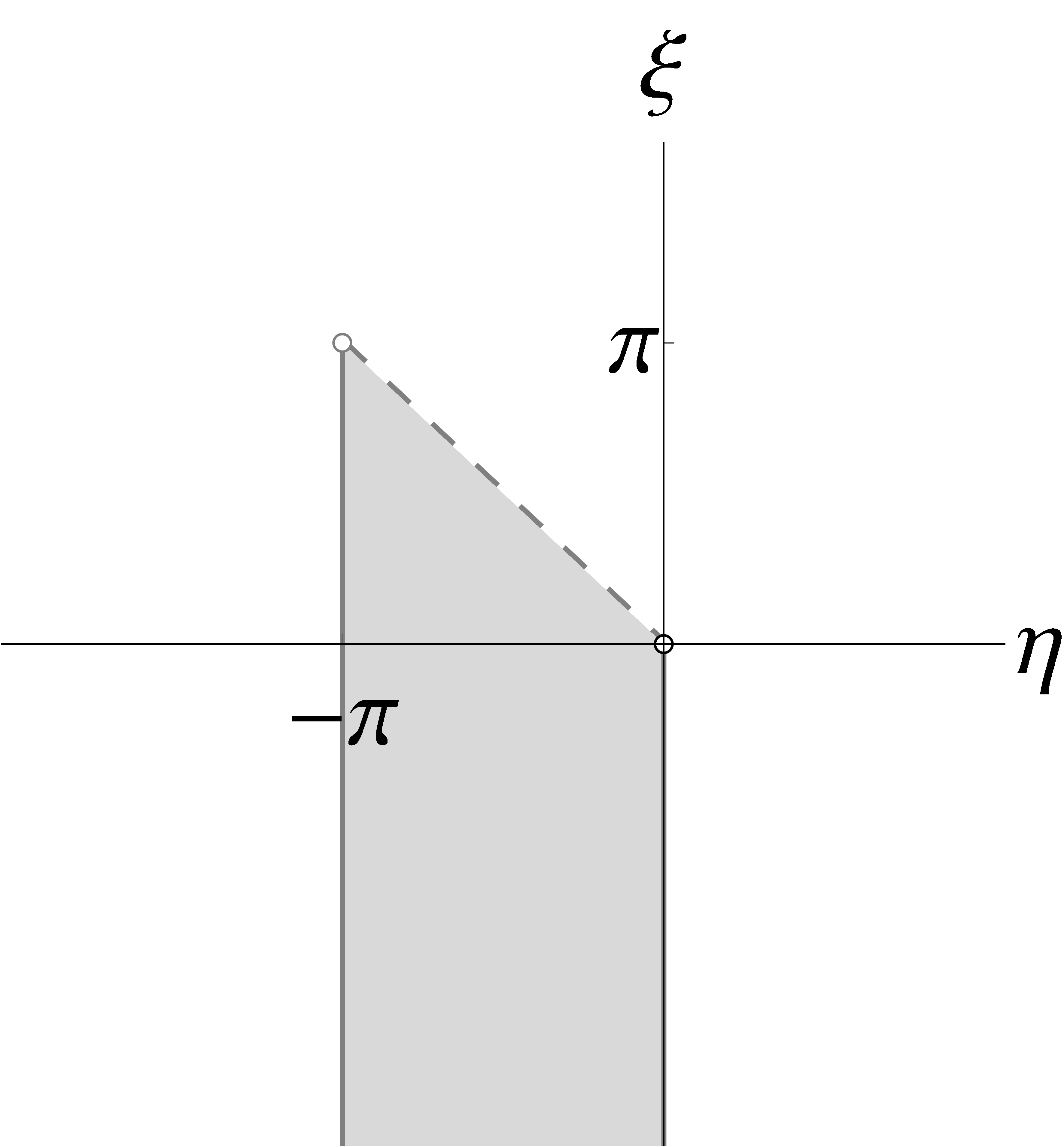}
    \caption{The set $\domainTrapezoid$}
    \label{figure:domainTrapezoid}
\end{figure}

\noindent \textit{Proof of (A).} For each $n$, let $v_{n} = \theFunctionh(\eta_{n}, \xi_{n})$ (the function $\theFunctionh$ was defined in the proof of Lemma~\ref{lemma:changeOfCoordinates-Properties}). Observe that $\xi_{n} \to -\infty$ if and only if $\lvert v_{n} \rvert \to \infty$. If $\lvert v_{n} \rvert \to \infty$, then $u_{n} \to u_{0}$. Conversely, since
\begin{equation*}
\bigg \lvert \frac{v_{n}-1}{\lvert v_{n} \rvert - 1} \bigg \rvert \geq 1 \text{,}
\end{equation*}
we have $\lvert v_{n} \rvert \to \infty$, whenever $u_{n} \to u_{0}$.

\noindent \textit{Proof of (B).}
For each $n$, we have $\xi_{n} < \pi$.
If $\xi_{n} \neq 0$, then we can write
\begin{equation}
\label{equation:behaviourOfRealAndImaginaryPartsOfw}
\begin{aligned}
\Re u_{n} & = \frac{1}{2} \cdot \frac{\sin \xi_{n}}{ \xi_{n}} \cdot \frac{\sqrt{3}(\eta_{n}+\xi_{n})}{\sinh \sqrt{3} (\eta_{n} + \xi_{n})} \cdot \bigg( \frac{ 1 - \lfrac{\xi_{n}}{\eta_{n}} }{ 1 + \lfrac{\xi_{n}}{\eta_{n}} } \bigg) + \frac{1}{2} \text{,} \\
\Im u_{n} & = \frac{1}{2} \cdot \frac{1 - \cos \xi_{n}}{ \xi_{n}^2} \cdot \frac{\sqrt{3}(\eta_{n} + \xi_{n})}{\sinh \sqrt{3} (\eta_{n} + \xi_{n})} \cdot \bigg( \frac{ 1 - \lfrac{\xi_{n}}{\eta_{n}} }{ 1 + \lfrac{\xi_{n}}{\eta_{n}} } \bigg) \cdot \xi_{n} + \bigg( \frac{\sqrt{3}}{2} - \frac{\sqrt{3}}{e^{-\sqrt{3}(\eta_{n}+\xi_{n})} + 1} \bigg) \text{.}
\end{aligned}
\end{equation}
Otherwise, we have
\begin{equation}
\label{equation:behaviourOfRealAndImaginaryPartsOfw2}
u_{n} = \frac{1}{2} + \bigg( \frac{\sqrt{3}}{2} - \frac{\sqrt{3}}{e^{-\sqrt{3}\eta_{n}} + 1} \bigg) i \text{.}
\end{equation}
If $\lfrac{\xi_{n}}{\eta_{n}} \to (-1)^{+}$, then $\xi_{n} > 0$ and $\eta_{n} \neq 0$, for $n$ large enough (of course, also $\eta_{n} + \xi_{n} < 0$).
Without loss of generality, we can consider two cases
(note that in both of them $\eta_{n} + \xi_{n} \to 0$). If $\xi_{n} \to \pi$, then
$\lfrac{\xi_{n}}{\eta_{n}} \to (-1)^{+}$ 
implies that $\Im u_{n} \to \infty$. If no subsequence of $(\xi_{n})_{n=1}^{\infty}$ tends to $\pi$, then
$\lfrac{\xi_{n}}{\eta_{n}} \to (-1)^{+}$ 
implies that $\Re u_{n} \to \infty$. Since the real-valued functions $x \mapsto \lfrac{\sin x}{x}$, $x \mapsto \lfrac{x}{\sinh x}$, and $x \mapsto \lfrac{(1 - \cos x)}{x^2}$ are bounded, also the converse is true, from~\eqref{equation:behaviourOfRealAndImaginaryPartsOfw} (of course, if $u_{n} \to \infty$, then~\eqref{equation:behaviourOfRealAndImaginaryPartsOfw2} does not hold and $\xi_{n} \neq 0$ for almost all positive $n$.)

\noindent \textit{Proof of (C).} Assume that $(\eta_{n}, \xi_{n}) \to (0, 0)$ and $\lfrac{\xi_{n}}{\eta_{n}} \to \lfrac{1}{s} - 1$, for some $s \in \RR$ (it will turn out later that $s \geq 0$). We conclude from~\eqref{equation:behaviourOfRealAndImaginaryPartsOfw} and~\eqref{equation:behaviourOfRealAndImaginaryPartsOfw2} that $u_{n} \to s$. Conversely, let $u_{n} \to s$ for some $s \geq 0$. Using the fact that~\eqref{equation:invertingOf_f0} and~\eqref{equation:inverseOfTheFunction_f0} are equivalent (which follows from the proof of Lemma~\ref{proposition:theLevelSetsOfT}), we get $v_{n} \to 1$ and thus $\eta_{n} + \xi_{n} \to 0$, by the definition of $(v_{n})_{n=1}^{\infty}$. Since $\xi_{n} - \pi \leq \eta_{n} + \xi_{n} \leq \xi_{n}$, the sequence $(\xi_{n})_{n=1}^{\infty}$ is bounded. Therefore, again by the fact that $v_{n} \to 1$, without loss of generality we assume that $\xi_{n} \to 2k \pi$, for some integer $k$. But now we also see that $0 \leq 2k \pi \leq \pi$, hence $(\eta_{n}, \xi_{n}) \to (0, 0)$. From~\eqref{equation:behaviourOfRealAndImaginaryPartsOfw} and~\eqref{equation:behaviourOfRealAndImaginaryPartsOfw2} we get $\lfrac{\xi_{n}}{\eta_{n}} \to \lfrac{1}{s} - 1$.

We find the closure of $\domainOpenUpperShell$ first, in order to find its boundary. Let $(u_{n})_{n=1}^{\infty}$ be a sequence of elements from $\domainOpenUpperShell$ which converges to some $u \in \CC$. By Lemma~\ref{lemma:changeOfCoordinates-Properties}, for each positive $n$, there exists a sequence $(\eta_{n}, \xi_{n})_{n=1}^{\infty}$ of elements from $\domainOpenTrapezoid$ such that $u_{n} = \theCurvedf(\eta_{n}, \xi_{n})$. Without loss of generality, we assume that $(\eta_{n}, \xi_{n})_{n=1}^{\infty}$ has a limit. If $\xi_{n} \to -\infty$, then $u = u_{0}$, by~(A). Otherwise, we denote the limit of $(\eta_{n}, \xi_{n})_{n=1}^{\infty}$ by $(\eta, \xi)$). If this limit belongs to $\domainTrapezoid$, then from the homeomorphicity of $\theCurvedf$, we have
\begin{equation}
\label{eqation:theRangeOfTheCurvedf}
u \in \theCurvedf( \domainTrapezoid ) = \bigcup \limits_{-\pi \leq \eta \leq 0} \Gamma_{\eta} = \Gamma_{-\pi} \cup \domainOpenUpperShell \cup \Gamma_{0} \text{.}
\end{equation}
If $(\eta, \xi) \notin \domainTrapezoid$, then $\eta_{n} + \xi_{n} \to 0$ (i.e. the sequence $(\eta_{n}, \xi_{n})$ approaches one of the edges of the trapezoid $\domainTrapezoid$).
By~(B), it is impossible that $\lfrac{\xi_{n}}{\eta_{n}} \to (-1)^{+}$. Again, without loss of generality, we assume that $\lim_{n \to \infty} \lfrac{\xi_{n}}{\eta_{n}} \to (-1)^{+} = \lfrac{1}{s} - 1$, for some $s \in \RR$.
For all positive $n$, $(\eta_{n}, \xi_{n}) \in \domainTrapezoid$, hence $0 < \lfrac{\eta_{n}}{(\eta_{n} + \xi_{n})} \to s \geq 0$. This proves that
\begin{equation}
\label{equation:topologicalClosureOfUpperShell}
\topologicalClosure{\domainOpenUpperShell} = \Gamma_{-\pi} \cup \{ u_{0} \} \cup \domainOpenUpperShell \cup \Gamma_{0} \cup [0, \infty) \text{.}
\end{equation}

From Lemma~\ref{lemma:changeOfCoordinates-Properties} and the fact that $\domainTrapezoid \subseteq \domainMaximalOpenTrapezoid$ is simply connected, the set $\theCurvedf( \domainOpenTrapezoid ) = \domainOpenUpperShell$ is open and simply connected in $\CC$. Therefore,
\begin{equation*}
\partial \domainOpenUpperShell = \topologicalClosure{\domainOpenUpperShell} \setminus \domainOpenUpperShell = \Gamma_{-\pi} \cup \{ u_{0} \} \cup \Gamma_{0} \cup [0, \infty) \text{,}
\end{equation*}
which finishes the proof of the first statement.

Let us deal with the second one. Since we already know (from the proof of Lemma~\ref{lemma:changeOfCoordinates-Properties}) that $\Im u = \Im \theCurvedf(\eta, \xi) > 0$ for $(\eta, \xi) \in \domainTrapezoid$, it suffices to show that $\Re \theCurvedf(\eta, \xi) > 0$. Let $R = e^{-\sqrt{3}(\eta+\xi)}$.
We have
\begin{equation*}
\Re \theCurvedf(\eta, \xi) = \frac{R^2 + 2 \sqrt{3} R \sin \xi - 1}{2R^2 - 2} = \frac{(R - \alpha)(R-\beta)}{2R^2 - 2} \text{,}
\end{equation*}
where
\begin{equation*}
\alpha = \sqrt{3 \sin^2 \xi + 1} - \sqrt{3} \sin{\xi} \qquad \text{and} \qquad \beta = -\sqrt{3 \sin^2 \xi + 1} - \sqrt{3} \sin{\xi} \text{.}
\end{equation*}
Since $\alpha > 0 > \beta$ and $R > 1$, what we must prove is equivalent to
\begin{equation}
\label{equation:inequality0}
R > \sqrt{3 \sin^2 \xi + 1} - \sqrt{3} \sin{\xi} \text{.}
\end{equation}
We prove this inequality by proving the following equivalent inequality instead:
\begin{equation}
\label{equation:inequality1}
e^{-\sqrt{3} (\eta+\xi)} > \sqrt{3 \sin^2 \xi + 1} - \sqrt{3} \sin{\xi} \text{}
\end{equation}
for $(\eta, \xi) \in \domainTrapezoid$. Consider two cases. First, if $\xi \geq 0$, then $\xi < -\eta < \pi$, therefore $\rhs \leq 2 - \sqrt{3}$. Since $\lhs \geq 1$, the inequality is fulfilled. It remains to show~\eqref{equation:inequality1} for $\xi < 0$ and it is clear that we can assume that $\eta = 0$. We split this case into two subcases: $\xi \in (-\pi, 0)$ and $\xi \leq -\pi$. In the first one we put $a = -\sqrt{3} \sin \xi > 0$ and obtain
\begin{equation*}
e^{-\sqrt{3} \xi} > e^a > 1 + a + \tfrac{1}{2} a^2 > \sqrt{a^2 + 1} + a \text{,}
\end{equation*}
which is our assertion. In the remaining subcase we have
\begin{equation*}
e^{-\sqrt{3} \xi} \geq e^{\sqrt{3} \pi} > 2 + \sqrt{3} \geq \sqrt{3 \sin^2 \xi + 1} - \sqrt{3} \sin{\xi} \text{.}
\end{equation*}
This finishes the proof.
\end{proof}

Now, we are ready to define the function $\theFunctionT$.
\begin{definition}
\label{definition:theSuitableExtensionOfT}
Let $\theFunctionT$ be a continuous extension of $\theRealValuedT$ (given in~\eqref{equation:theRealVersionOfT}), to $\topologicalClosure{\domainOpenUpperShell} \setminus \{ u_{0} \}$ that is analytic on $\domainOpenUpperShell$. It follows from Lemma~\ref{lemma:propertiesOfTheUpperShell} and from the integral representation of a branch of the logarithmic function that $\theFunctionT$ is well-defined and unique.
\end{definition}

\begin{remark}
\label{remark:hatDeltaSet}
The set $\domainOpenUpperShell$ is presented in Fig.~\ref{figure:hatDeltaSet}. As we will show later, in Theorem~\ref{theorem:cauchyTransformOfVmonotoneStandardGaussian}, the Cauchy--Stieltjes transform $\theFunctionGmu$ can be written as
\begin{equation*}
\theFunctionGmu(z) = \frac{1}{z u} \text{, } \qquad \text{ with } u = \theInverseOfT(\halfOfLogarithm(z)) \text{.}
\end{equation*}
There is a correspondence between the sets $\partial \openedFirstQuadrant$ and $\partial \domainOpenUpperShell$, associated with this form of the transform (we now consider both $\openedFirstQuadrant$ and $\domainOpenUpperShell$ as a subsets of the Riemann sphere). The fact that $z \in \openedFirstQuadrant$ approaches certain subsets of $[0, \infty)$ is equivalent to the fact that $u \in \domainOpenUpperShell$ approaches respective subsets of $\partial \domainOpenUpperShell$, which is described by Table~\ref{table:theBoundaryOfUpperShell} (see also Fig.~\ref{figure:hatDeltaSet}) and demonstrated in the proof of Theorem~\ref{theorem:vMonotoneStandardGaussianDistribution}.

\begin{table}[h!]
    \centering
    \caption{The correspondence between $\partial \openedFirstQuadrant$ and $\partial \domainOpenUpperShell$}
    \renewcommand{\arraystretch}{1.5}
    \begin{tabular}{  c|c c c c c c c c  }
        \hline
        $z$ & $0$ & $(0, \sqrt{2})$ & $\sqrt{2}$ & $(\sqrt{2}, \sqrt{2+\gamma_{0}})$ & $\sqrt{2+\gamma_{0}}$ & $(\sqrt{2+\gamma_{0}}, \infty)$ & $\complexInfty$ & $i \cdot \positiveRealHalfAxis$ \\
        $u$ & $\complexInfty$ & $\Gamma_{-\pi}$ & $u_{0}$ & $\Gamma_{0}$ & $0$ & $(0, 1)$ & $1$ & $(1, \infty)$ \\ 
        \hline
    \end{tabular}
    \label{table:theBoundaryOfUpperShell}
    \renewcommand{\arraystretch}{1}
\end{table}

In addition, the last column describes the correspondence between the imaginary upper half-axis and certain subset of $\partial \domainOpenUpperShell$. This particular fact follows from Proposition~\ref{proposition:domainCutStripANDhalfOfLogarithm} and from~(C*) from the proof of Lemma~\ref{lemma:homeomorphicityOfH}.
Finally, the last but one column describes the behavior of $u$ as $z \to \complexInfty$. This limit can be determined directly using the fact that $\theInverseOfT$ is continuous.
\end{remark}

\section{Function $\theInverseOfT$}
\label{section:theInverseOfT}
In this section we obtain the inverse of $\theFunctionT$, which is a continuous extension of $\theInverseOfRealT$ and which is analytic on the interior of its domain. We prove the existence of $\theInverseOfT$ and we find its implicit form. In order to do so, we first introduce an auxiliary function $\theCurvedH$ and we study its properties.
\begin{definition}
\label{definition:theFunctionH}
Let $\theCurvedH \colon \domainTrapezoid \to \CC$ be defined by
\begin{equation}
\label{equation:theFunctionH}
\theCurvedH(\eta, \xi) = \theFunctionT \big( \theCurvedf(\eta, \xi) \big) \text{.}
\end{equation}
The fact that this function is well defined follows from the inclusion
\begin{equation*}
\theCurvedf( \domainTrapezoid ) = \bigcup_{-\pi \leq \eta \leq 0} \Gamma_{\eta} \subseteq \topologicalClosure{ \domainOpenUpperShell } \setminus \{ u_{0} \} \text{,}
\end{equation*}
which is implied by~\eqref{equation:topologicalClosureOfUpperShell}.
\end{definition}

In the following three statements we prove that $\theCurvedH$ is invertible and that its inverse is continuous. We find the exact form and the range of $\theCurvedH$ in the process.
\begin{proposition}
\label{proposition:injectivityOfH}
The function $\theCurvedH$ is an injection.
\end{proposition}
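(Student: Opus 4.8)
\emph{Plan.} Write $\theCurvedH=\theFunctionT\circ\theCurvedf$. The idea is to decouple the two real coordinates: first show that $\Im\theCurvedH(\eta,\xi)$ depends only on $\eta$ (in fact equals $\eta/2$), so that equal $\theCurvedH$-values force equal $\eta$'s; then, for each fixed $\eta$, show that $\xi\mapsto\Re\theCurvedH(\eta,\xi)$ is strictly monotone. Since $\theCurvedf$ is a bijection (Lemma~\ref{lemma:changeOfCoordinates-Properties}), for fixed $\eta$ the map $\xi\mapsto\theCurvedf(\eta,\xi)$ is an injective parametrization of $\Gamma_\eta$, so the second point indeed yields injectivity of $\theCurvedH(\eta,\cdot)$, and the two together give injectivity of $\theCurvedH$.

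\emph{Step 1 (imaginary part).} Fix $\eta\in(-\pi,0)$. On $\domainOpenUpperShell$ the function $\theFunctionT$ is a continuous branch of $\theGAFunctionT$ (Definition~\ref{definition:theSuitableExtensionOfT}, Proposition~\ref{proposition:theMultivaluedVersionOfT}), so Proposition~\ref{proposition:theLevelSetsOfT} applies with $\Delta_\eta=\domainOpenUpperShell$ (note $\Gamma_\eta\subseteq\domainOpenUpperShell\subseteq\CC_+\setminus\{u_0\}$) and gives $\Im\theFunctionT|_{\Gamma_\eta}\equiv(\eta+2k(\eta)\pi)/2$ for some $k(\eta)\in\ZZ$. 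Fixing a reference parameter $\xi_0<0$, the map $\eta\mapsto\Im\theFunctionT(\theCurvedf(\eta,\xi_0))$ is continuous on $(-\pi,0)$, so $\eta\mapsto k(\eta)$ is a continuous integer-valued function, hence a constant $k$. To evaluate it, pick $(\eta_n,\xi_n)\in\domainOpenTrapezoid$ with $(\eta_n,\xi_n)\to(0,0)$ and $\xi_n/\eta_n\to\infty$ (for instance $\eta_n=-n^{-2}$, $\xi_n=-n^{-1}$): by statement~(C) in the proof of Lemma~\ref{lemma:propertiesOfTheUpperShell} we have $\theCurvedf(\eta_n,\xi_n)\to0$, and since $\theFunctionT$ is continuous at $0$ with $\theFunctionT(0)=\theRealValuedT(0)\in\RR$, we get $k\pi=\lim_n\Im\theFunctionT(\theCurvedf(\eta_n,\xi_n))=0$, i.e.\ $k=0$. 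Thus $\Im\theCurvedH(\eta,\xi)=\eta/2$ on $\domainOpenTrapezoid$, and the two boundary cases $\eta\in\{-\pi,0\}$ follow by letting $\eta\to-\pi^{+}$, resp.\ $\eta\to0^{-}$, and using the continuity of $\theFunctionT$ on $\topologicalClosure{\domainOpenUpperShell}\setminus\{u_0\}$. In particular $\theCurvedH(\eta_1,\xi_1)=\theCurvedH(\eta_2,\xi_2)$ implies $\eta_1=\eta_2$.

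\emph{Step 2 (real part, $\eta$ fixed).} For $\eta\in(-\pi,0)$ the curve $\Gamma_\eta$ lies in $\domainOpenUpperShell$, where $\theFunctionT$ is analytic with $\theFunctionT'(u)=-u/(u^2-u+1)$ (the derivative is branch-independent); the same formula holds along the boundary arcs $\Gamma_{-\pi}$ and $\Gamma_0$ by local analytic continuation of $\theFunctionT$ across them, which is legitimate because these arcs avoid the branch points $u_0,\complexAdjoint{u_0}$ of $\theGAFunctionT$. By the chain rule $\partial_\xi\theCurvedH(\eta,\xi)=\theFunctionT'(\theCurvedf(\eta,\xi))\,\partial_\xi\theCurvedf(\eta,\xi)$, which is \emph{real} by Step~1. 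It is moreover nonzero: on the one hand $u=\theCurvedf(\eta,\xi)\in\openedFirstQuadrant$ (Lemma~\ref{lemma:propertiesOfTheUpperShell}), so $u\notin\{0,u_0,\complexAdjoint{u_0}\}$ and $\theFunctionT'(u)\neq0$; on the other hand, differentiating the explicit formula \eqref{equation:changeOfCoordinates}--\eqref{equation:parametrizationOfGammaEta} for $\theCurvedf$ gives, with $R=e^{-\sqrt3(\eta+\xi)}$,
\[
\partial_\xi\theCurvedf(\eta,\xi)=\frac{2i}{1-R^2}\Big(u_0\,(u-\complexAdjoint{u_0})-R^2\,\complexAdjoint{u_0}\,(u-u_0)\Big),
\]
so a zero of $\partial_\xi\theCurvedf$ would force $u_0(u-\complexAdjoint{u_0})=R^2\complexAdjoint{u_0}(u-u_0)$; taking moduli, with $|u_0|=|\complexAdjoint{u_0}|=1$, $r_1=|u-u_0|>0$, $r_2=|u-\complexAdjoint{u_0}|$, this yields $r_2=R^2r_1$, whereas $R=r_2/r_1$ (Definition~\ref{definition:changeOfCoordinates}), hence $R=1$, contradicting $R=e^{-\sqrt3(\eta+\xi)}>1$ on $\domainTrapezoid$. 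Therefore $\partial_\xi\theCurvedH(\eta,\cdot)$ is a continuous, real-valued, nowhere-vanishing function on the interval $\{\xi<-\eta\}$, so it has constant sign, $\Re\theCurvedH(\eta,\cdot)$ is strictly monotone, and $\theCurvedH(\eta,\cdot)$ is injective.

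Putting the steps together, $\theCurvedH(\eta_1,\xi_1)=\theCurvedH(\eta_2,\xi_2)$ gives first $\eta_1=\eta_2$ and then $\xi_1=\xi_2$, so $\theCurvedH$ is an injection. The two delicate points I expect are (a) proving that the integer $k$ in Proposition~\ref{proposition:theLevelSetsOfT} is globally constant and equal to $0$, and (b) the modest bookkeeping needed to justify the derivative computation on the boundary curves $\Gamma_{-\pi}$ and $\Gamma_0$, where $\theFunctionT$ is a priori only defined by continuous extension; once $\partial_\xi\theCurvedf$ and $\theFunctionT'$ are in hand, the rest is mechanical.
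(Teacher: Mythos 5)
Your proof is correct and follows essentially the same route as the paper: constancy of $\Im\theCurvedH$ in $\xi$ (with an $\eta$-independent integer, via Proposition~\ref{proposition:theLevelSetsOfT} plus continuity and connectedness) reduces the problem to strict monotonicity of $\xi\mapsto\Re\theCurvedH(\eta,\xi)$, which both arguments obtain from the realness of $\partial_\xi\theCurvedH$ together with the nonvanishing of $\theFunctionT'$ and of $\partial_\xi\theCurvedf$ (the paper phrases this in gradient/orthogonality language, you in terms of the complex derivative). The only genuine variations are minor: you check $\partial_\xi\theCurvedf\neq 0$ by a complex factorization and the identity $R=\lfrac{r_2}{r_1}$ rather than the paper's real-coordinate system, and you also determine the integer $k=0$, which is not needed for injectivity and is deferred in the paper to Lemma~\ref{lemma:exactFormOfH}.
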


\begin{proof}
First, according to Proposition~~\ref{proposition:theLevelSetsOfT} and the continuity of $\theCurvedH$, there exists $n_{1} \in \ZZ$ (independent of $\xi$) such that
\begin{equation*}
\theCurvedHIm(\eta, \xi) = \frac{\eta + 2 n_{1} \pi}{2} \text{}
\end{equation*}
for any $(\eta, \xi) \in \domainTrapezoid$. The quantity $n_{1}$ does not depend also on $\eta$.
Otherwise, it would have to be a constant function, since it is continuous, integer-valued, and defined on a connected set (we will determine $n_{1}$ soon).
For that reason, it suffices to show that, for each $\eta \in [-\pi, 0]$, the function $\xi \mapsto \theCurvedHRe(\eta, \xi)$ is strictly monotonic. For any real $s$ and $t$ such that $s + i t \in \topologicalClosure{\domainOpenUpperShell} \setminus \{ u_{0} \}$, define the real-valued functions $\theFunctionT_{1}$ and $\theFunctionT_{2}$ by
\begin{equation*}
\theFunctionT_{1}(s, t) + i \theFunctionT_{2}(s, t) = \theContinuedFunctionT(s+it) \text{,}
\end{equation*}
where $\theContinuedFunctionT$ is the analytical continuation of $\myRestriction{\theFunctionT}{\domainOpenUpperShell}$ to $\CC_{+} \setminus (\Gamma_{\pi / 2} \cup \{ u_{0} \})$. The existence of such a continuation is provided by Lemma~\ref{lemma:changeOfCoordinates-Properties} and
the integral representation of a branch of the logarithmic function. Now, using methods of the analytic geometry, we demonstrate that $(\partial / \partial \xi) \theCurvedHRe$ has a constant sign with respect to $\xi$. Fix $\eta \in [-\pi, 0]$. It is quite obvious that $\xi \mapsto \theCurvedf(\eta, \xi)$ is a $\mathcal{C}^{\infty}$-function. Using the chain rule, we get
\begin{equation*}
\frac{\partial}{\partial \xi} \theCurvedHRe = \frac{\partial \theFunctionT_{1}}{\partial s} \cdot \frac{\partial}{\partial \xi} \theCurvedfRe + \frac{\partial \theFunctionT_{1}}{\partial t} \cdot \frac{\partial}{\partial \xi} \theCurvedfIm = \langle \nabla \theFunctionT_{1}, \vec{u}_{\xi} \rangle \text{,}
\end{equation*}
where
\begin{equation*}
\vec{u}_{\xi} \coloneqq \frac{\partial}{\partial \xi} ( \theCurvedfRe, \theCurvedfIm ) \text{}
\end{equation*}
(for convenience, we dropped the arguments of the functions).
By Proposition~\ref{proposition:theLevelSetsOfT} we have
\begin{equation*}
0 = \frac{\partial}{\partial \xi} \theCurvedHIm = \frac{\partial \theFunctionT_{2}}{\partial s} \cdot \frac{\partial}{\partial \xi} \theCurvedfRe + \frac{\partial \theFunctionT_{2}}{\partial t} \cdot \frac{\partial}{\partial \xi} \theCurvedfIm \text{,}
\end{equation*}
i.e. $\nabla \theFunctionT_{2} \perp \vec{u}_{\xi}$. From the Cauchy--Riemann equations, also $\nabla \theFunctionT_{1} \perp \nabla \theFunctionT_{2}$.

In order to prove that $\xi \mapsto \theCurvedHRe(\eta, \xi)$ is strictly monotonic,
it remains to show that $\nabla \theFunctionT_{1} \parallel \vec{u}_{\xi}$. For this purpose, we need to demonstrate that all $\nabla \theFunctionT_{1}$, $\nabla \theFunctionT_{2}$ and $\vec{u}_{\xi}$ are nonzero vectors. Indeed, for any suitable $u$, we have
\begin{equation*}
\theContinuedFunctionT'(u) = \frac{u}{u^2 - u +1} \text{,}
\end{equation*}
which is nonzero on $\CC_{+} \setminus (\Gamma_{\pi / 2} \cup \{ u_{0} \})$, therefore both $\nabla \theFunctionT_{1}$ and $\nabla \theFunctionT_{2}$ are nonzero vectors.
In order to show that also $\vec{u}_{\xi} \neq (0, 0)$, let
\begin{equation*}
p + i q = R e^{i \xi} \text{,} \qquad \text{ with } R = e^{-\sqrt{3}(\eta + \xi)} \text{,}
\end{equation*}
for any $(\eta, \xi) \in \RR^{2}$. Using~\eqref{equation:changeOfCoordinates} and~\eqref{equation:parametrizationOfGammaEta}, we get
\begin{equation*}
\frac{\partial \theCurvedf}{\partial \xi}(\eta, \xi) = \frac{-\sqrt{3} \big( p(1 - R^2) - \sqrt{3}q(1+R^2) \big) + 6i \big( R^2 - 1/2 p(1 + R^2) - \sqrt{3} / 6 q(1-R^2) \big)}{(1 - R^2)^2}
\end{equation*}
The above derivative is zero if and only if the following system is fulfilled:
\begin{equation*}
\begin{cases}
p(1 - R^2) - \sqrt{3}q(1+R^2) & = 0 \text{,} \\
R^2 - 1/2 p(1 + R^2) - \sqrt{3} / 6 q(1-R^2) & = 0 \text{.}
\end{cases}
\end{equation*}
If we multiply the latter equation by $-2p$ and then we replace $p(1-R^2)$ by $\sqrt{3}q(1+R^2)$ in its third summand, we obtain
\begin{equation*}
R^2 \cdot ( (p-1)^2 + q^2 ) = 0 \text{,}
\end{equation*}
which gives two solutions: $(p, q) = (0, 0)$ and $(p, q) = (1, 0)$, corresponding to $\eta + \xi = \infty$ and $\xi = - \eta$, respectively.
None of these solutions belongs to $\domainMaximalOpenTrapezoid$, therefore $\vec{u}_{\xi} \neq (0, 0)$. Thus we get $\nabla \theFunctionT_{1} \parallel \vec{u}_{\xi}$, i.e. $(\partial / \partial \xi) \theCurvedHRe$ has a constant nonzero sign, which means that $\xi \mapsto \theCurvedHRe(\eta, \xi)$ is strictly monotonic and therefore $\theCurvedH$ is indeed an injection.
\end{proof}

In order to find the range of $\theCurvedH$, we first obtain its exact form.
\begin{lemma}
\label{lemma:exactFormOfH}
For any $(\eta, \xi) \in \domainTrapezoid$, we have
\begin{equation}
\label{equation:theExactFormOfTheCurvedH}
\begin{split}
\theCurvedH(\eta, \xi) =
& - \frac{\sqrt{3}}{6} \bigg( \principalArgument(R e^{i \xi} - 1) - \principalArgument(u - \complexAdjoint{ u_{0} }) + 2 \bigg \lceil \frac{\xi - \pi}{2 \pi} \bigg \rceil \pi + \frac{ \pi}{6} \bigg) \\
& \ - \frac{\realLogarithm ( \lvert u - u_{0} \rvert \cdot \lvert u - \complexAdjoint{ u_{0} } \rvert )}{2} + \frac{i \eta}{2} \text{,}
\end{split}
\end{equation}
where $R = e^{-\sqrt{3}(\eta + \xi)}$ and $u = \theCurvedf(\eta, \xi)$.
\end{lemma}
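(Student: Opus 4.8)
I would compute $\theCurvedH(\eta,\xi)=\theFunctionT(u)$, where $u:=\theCurvedf(\eta,\xi)$, by substituting the change of coordinates of Proposition~\ref{proposition:changeOfCoordinates} into the explicit formula~\eqref{equation:theMultivaluedVersionOfTinPolarCoordinates} for $\theGAFunctionT$, and treat the real and imaginary parts separately. Note first that $\theCurvedH$ is continuous on $\domainTrapezoid$, being a composition of continuous maps. Since $\theFunctionT$ is a branch of $\theGAFunctionT$, we may write $\theFunctionT(u)=\alpha-\beta(\realLogarithm r_{1}+i\varphi_{1})-\complexAdjoint{\beta}(\realLogarithm r_{2}+i\varphi_{2})$ with $r_{1}=\lvert u-u_{0}\rvert$, $r_{2}=\lvert u-\complexAdjoint{u_{0}}\rvert$ and $\varphi_{1},\varphi_{2}$ the continuous branches of $\arg(u-u_{0})$, $\arg(u-\complexAdjoint{u_{0}})$ on $\topologicalClosure{\domainOpenUpperShell}\setminus\{u_{0}\}$ singled out --- by Definition~\ref{definition:theSuitableExtensionOfT} and the integral representation of the logarithm --- by the condition that $\theFunctionT$ extend $\theRealValuedT$ on $\positiveRealHalfAxis$; unwinding~\eqref{equation:theRealVersionOfT} there identifies $\varphi_{1},\varphi_{2}$ on $\positiveRealHalfAxis$ as $\principalArgument(u-u_{0})$ and $\principalArgument(u-\complexAdjoint{u_{0}})$.

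For the imaginary part, Proposition~\ref{proposition:theLevelSetsOfT} applied to $\myRestriction{\theFunctionT}{\domainOpenUpperShell}$ and each $\Gamma_{\eta}$ with $-\pi<\eta<0$ gives $\Im\theCurvedH(\eta,\xi)=(\eta+2k\pi)/2$ there, with $k\in\ZZ$ independent of $\xi$; as $\domainTrapezoid$ is connected and $\theCurvedH$ continuous, $k$ is independent of $\eta$ as well, and the identity extends to $\Gamma_{-\pi}$ and $\Gamma_{0}$ by continuity. To identify $k$, I would choose, using part~(C) in the proof of Lemma~\ref{lemma:propertiesOfTheUpperShell}, a sequence $(\eta_{n},\xi_{n})\to(0,0)$ in $\domainTrapezoid$ with $u_{n}:=\theCurvedf(\eta_{n},\xi_{n})\to s\in\positiveRealHalfAxis$; then $\theCurvedH(\eta_{n},\xi_{n})=\theFunctionT(u_{n})\to\theRealValuedT(s)\in\RR$ by continuity of $\theFunctionT$ at $s$, so $k=0$ and $\Im\theCurvedH(\eta,\xi)=\eta/2$, which is the last summand in~\eqref{equation:theExactFormOfTheCurvedH}.

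For the real part, taking real parts above and inserting $\alpha=-\sqrt{3}\pi/9$, $\beta=(3-\sqrt{3}i)/6$ yields
\begin{equation*}
\Re\theCurvedH(\eta,\xi)=-\frac{\sqrt{3}\pi}{9}-\frac{\realLogarithm(r_{1}r_{2})}{2}-\frac{\sqrt{3}}{6}(\varphi_{1}-\varphi_{2})\text{,}
\end{equation*}
so the logarithmic term of~\eqref{equation:theExactFormOfTheCurvedH} is immediate (with $r_{1}r_{2}=\lvert u-u_{0}\rvert\cdot\lvert u-\complexAdjoint{u_{0}}\rvert$); since $-\sqrt{3}\pi/9=-\tfrac{\sqrt{3}}{6}\cdot\tfrac{2\pi}{3}$ and $\tfrac{\pi}{6}-\tfrac{2\pi}{3}=-\tfrac{\pi}{2}$, what is left is to show $\varphi_{2}=\principalArgument(u-\complexAdjoint{u_{0}})$ and $\varphi_{1}=\principalArgument(Re^{i\xi}-1)-\tfrac{\pi}{2}+2\lceil(\xi-\pi)/(2\pi)\rceil\pi$. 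The first is easy: $\topologicalClosure{\domainOpenUpperShell}$ lies in the closed upper half-plane and $\Im\complexAdjoint{u_{0}}=-\sqrt{3}/2<0$, so $u-\complexAdjoint{u_{0}}$ stays in the open upper half-plane, whence $\principalArgument(u-\complexAdjoint{u_{0}})$ is a continuous branch of $\arg(u-\complexAdjoint{u_{0}})$ on the connected set $\topologicalClosure{\domainOpenUpperShell}\setminus\{u_{0}\}$ that agrees with $\varphi_{2}$ on $\positiveRealHalfAxis$, hence coincides with it. For $\varphi_{1}$, Proposition~\ref{proposition:changeOfCoordinates} and $R=e^{-\sqrt{3}(\eta+\xi)}>1$ on $\domainTrapezoid$ give $u-u_{0}=\dfrac{\sqrt{3}}{R^{2}-1}\,e^{-i\pi/2}(Re^{i\xi}-1)$, so $r_{1}=\sqrt{3}\lvert Re^{i\xi}-1\rvert/(R^{2}-1)$ and $\arg(u-u_{0})\equiv\arg(Re^{i\xi}-1)-\pi/2\pmod{2\pi}$; consequently $N(\eta,\xi):=\tfrac{1}{2\pi}\bigl(\varphi_{1}-\principalArgument(Re^{i\xi}-1)+\pi/2\bigr)$ is integer-valued.

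The one genuinely delicate step --- and the source of the ceiling term --- is to evaluate $N$, i.e.\ to keep track of how the spirals $\Gamma_{\eta}$ wind around $u_{0}$. I would argue as follows. On $\domainTrapezoid$ the discontinuities of $\principalArgument(Re^{i\xi}-1)$ occur exactly where $Re^{i\xi}-1\in(-\infty,0)$, i.e.\ where $\sin\xi=0$ and $R\cos\xi<1$; since $R>1$ this forces $\cos\xi=-1$, so $\xi$ is an odd multiple of $\pi$, and there $\principalArgument(Re^{i\xi}-1)$ jumps by $-2\pi$ as $\xi$ increases, while $\lceil(\xi-\pi)/(2\pi)\rceil$ jumps by $+1$ at precisely those points. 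Hence, $\varphi_{1}$ being continuous, $N(\eta,\xi)-\lceil(\xi-\pi)/(2\pi)\rceil$ is a continuous, integer-valued function on the connected set $\domainTrapezoid$, so it is constant; and it is $0$, as seen by evaluating along the anchor sequence $(\eta_{n},\xi_{n})\to(0,0)$ of the imaginary-part step, where $\lceil(\xi_{n}-\pi)/(2\pi)\rceil=0$ for large $n$, $\varphi_{1}(u_{n})\to\principalArgument(s-u_{0})$, and inverting the displayed relation gives $\arg(R_{n}e^{i\xi_{n}}-1)=\tfrac{\pi}{2}+\principalArgument(u_{n}-u_{0})\to\tfrac{\pi}{2}+\principalArgument(s-u_{0})\in(-\pi/2,\pi/2)$, so $\principalArgument(R_{n}e^{i\xi_{n}}-1)$ converges to the same value. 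This yields $\varphi_{1}=\principalArgument(Re^{i\xi}-1)-\pi/2+2\lceil(\xi-\pi)/(2\pi)\rceil\pi$, and combining with the imaginary part gives~\eqref{equation:theExactFormOfTheCurvedH}. I expect the sign bookkeeping here, together with a careful check that the anchor sequence may be taken with $R_{n}e^{i\xi_{n}}-1$ avoiding $(-\infty,0]$, to absorb most of the effort, though each ingredient is elementary.
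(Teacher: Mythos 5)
Your proposal is correct and follows essentially the same route as the paper's proof: the polar-coordinate formula for $\theGAFunctionT$, Proposition~\ref{proposition:theLevelSetsOfT} for the imaginary part, the bookkeeping of the jumps of $(\eta,\xi)\mapsto\principalArgument(Re^{i\xi}-1)$ across $\xi\in(2\ZZ+1)\pi$ producing the ceiling term, and the anchor limit $(\eta,\xi)\to(0,0)$, $u\to s\in\positiveRealHalfAxis$, matched against $\theRealValuedT(s)$ to fix the remaining integer constants to zero. The only cosmetic difference is that you pin down the winding constant by evaluating the argument branches $\varphi_1,\varphi_2$ separately along the anchor sequence, whereas the paper carries unknown integers $n_1,n_2$ through the whole expression for $\theCurvedH$ and identifies them by comparing its limit with $\theRealValuedT(s)$.
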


\begin{proof}
First, according to~\eqref{equation:theMultivaluedVersionOfTinPolarCoordinates}, we have 
\begin{equation*}
\theFunctionT(u) = -\frac{\realLogarithm ( r_{1} r_{2} )}{2} - \frac{\sqrt{3}(\varphi_{1} - \varphi_{2} +  2 \pi / 3)}{6} - i \cdot \bigg( \frac{\sqrt{3}}{6} \realLogarithm \frac{r_{2}}{r_{1}} + \frac{\varphi_{1} + \varphi_{2}}{2} \bigg) \text{.}
\end{equation*}
We have already obtained $\theCurvedHIm(\eta, \xi)$ in the proof of Proposition~\ref{proposition:injectivityOfH}. Now, let us determine $\theCurvedHRe(\eta, \xi)$ with $(\eta, \xi) \in \domainTrapezoid$. Of course,
\begin{equation*}
\theCurvedHRe(\eta, \xi) = -\frac{\realLogarithm ( \lvert u - u_{0} \rvert \cdot \lvert u - \complexAdjoint{ u_{0} } \rvert )}{2} - \frac{\sqrt{3}}{6} \cdot \bigg( \certainArgumentB \Big( \frac{ u - u_{0} }{ u - \complexAdjoint{ u_{0} } } \Big) +  \frac{2 \pi}{3} \bigg) \text{,}
\end{equation*}
where `$\certainArgumentB$' is a certain branch of the argument and where $u = \theCurvedf(\eta, \xi)$. We must thus determine
\begin{equation*}
(\eta, \xi) \mapsto \certainArgumentB \bigg( \frac{ u - u_{0} }{ u - \complexAdjoint{ u_{0} } } \bigg) \text{,}
\end{equation*}
which will be done by means of `$\principalArgument$', i.e. the principal branch of the argument. In other words, $\principalArgument = \Im \principalLogarithm$, where `$\principalLogarithm$' was defined by~\eqref{equation:theSuitableLogarithm}. As before, let $R = e^{\sqrt{3}(\eta + \xi)}$. By the definition of $\theCurvedf$ and since $R > 1$, we can write
\begin{equation*}
\certainArgumentB \Big( \frac{ u - u_{0} }{ u - \complexAdjoint{ u_{0} } } \Big) = \principalArgument(R e^{i \xi} - 1) - \principalArgument(u - \complexAdjoint{ u_{0} }) + 2k(\eta, \xi)\pi + 3 / 2 \pi \text{,}
\end{equation*}
where $k$ is some integer-valued function, which we now deal with. Since $u \in \CC_{+}$, the function $(\eta, \xi) \mapsto \principalArgument(u - \complexAdjoint{ u_{0} })$ is continuous and has values in $[0, \pi]$. For each $n \in \ZZ$, the function
\begin{equation}
\label{equation:summandInCertainArgumentB}
(\eta, \xi) \mapsto \principalArgument(R e^{i \xi} - 1) \text{,} \qquad \eta + \xi < 0
\end{equation}
is continuous on the infinite trapezoid
\begin{equation*}
\{ (\eta, \xi) \in \RR^{2} : \zeta_{n-1} < \xi \leq \zeta_{n} \text{ and } \eta < -\xi \} \text{,}
\end{equation*}
where $\zeta_{n} = (2n+1) \pi$, and is discontinuous on the half-lines $(-\infty, -\zeta_{n}) \times \{ \zeta_{n} \}$, for every $n \in \ZZ$.
Indeed, the function~\eqref{equation:summandInCertainArgumentB} is discontinues if and only if $R e^{i \xi} \in (-\infty, 1) + \{ 0 \} \cdot i$ but since $R > 1$, this condition is equivalent to $R e^{i \xi} \in (-\infty, 0) + \{ 0 \} \cdot i$.
Fix $-\pi \leq \eta \leq 0$. The function $k(\eta, \cdot)$ has to be constant on $(\zeta_{n-1}, \zeta_{n}]$ and equal to, say, $k_{n}$. Comparing the left- and right-sided limits of $\varphi_{1} - \varphi_{2}$ as $\xi$ approaches $\zeta_{n}$, we get the following recurrence:
\begin{equation*}
k_{n+1} = k_{n} + 1 \text{.}
\end{equation*}
Therefore $k_{n} = n_{2}(\eta) + n$ for some integer $n_{2}(\eta)$, which implies that
\begin{equation*}
k(\eta, \xi) = n_{2}(\eta) + \bigg \lceil \frac{\xi - \pi}{2 \pi} \bigg \rceil \text{.}
\end{equation*}
The function $(\eta, \xi) \mapsto \principalArgument(R e^{i \xi} - 1) + 2 (n_{2}(\eta) + \lceil (\xi - \pi) / 2\pi \rceil) \pi$ is continuous. If we fix $\xi = -2 \pi$ and consider this function as function of one variable $\eta$, we get the (continuous!) function of the form $\eta \mapsto 2 n_{2}(\eta) \pi$, which means that $n_{2}(\eta) = n_{2} \in \ZZ$ does not depend on $\eta$. Therefore
\begin{equation*}
\label{equation:theExactFormOfArgumentOfQuotientWithConstants}
\begin{split}
\theCurvedH(\eta, \xi) =
& -\frac{\sqrt{3}}{6} \bigg( \principalArgument(R e^{i \xi} - 1) - \principalArgument(u - \complexAdjoint{ u_{0} }) + 2 n_{2} \pi + 2 \bigg \lceil \frac{\xi - \pi}{2 \pi} \bigg \rceil \pi + \frac{13 \pi}{6} \bigg) \\
& \ -\frac{\realLogarithm ( \lvert u - u_{0} \rvert \cdot \lvert u - \complexAdjoint{ u_{0} } \rvert )}{2} + i \cdot \frac{\eta + 2 n_{1} \pi}{2} \text{}
\end{split}
\end{equation*}
for some integers $n_{1}$ and $n_{2}$ which we now compute.

Let $(\eta_{n}, \xi_{n})_{n=1}^{\infty}$ be a sequence of elements from $\domainTrapezoid$. For each $n$, let $u_{n}$ be the same as in the proof of Lemma~\ref{lemma:propertiesOfTheUpperShell}, i.e. $u_{n} = \theCurvedf(\eta_{n}, \xi_{n})$, and let $R_{n} = e^{-\sqrt{3} (\eta_{n} + \xi_{n})}$. First, note that
\begin{equation}
\label{equation:theLimitOfEsEnPrim}
\lim \limits_{n \to \infty} \frac{\eta_{n}}{\eta_{n} + \xi_{n}} = \lim \limits_{n \to \infty} \Bigg( \frac{\sqrt{3}}{2} \cdot \frac{\xi_{n}}{R_{n} - 1} + \frac{1}{2} \Bigg) \text{}
\end{equation}
(the existence of the former limit is equivalent to the existence of the latter one), since
\begin{equation*}
\frac{\sqrt{3}}{2} \cdot \frac{\xi_{n}}{\realLogarithm R_{n}} = \frac{1}{2} \cdot \frac{\eta_{n} - \xi_{n}}{\eta_{n} + \xi_{n}} \cdot \frac{R_{n} - 1}{\realLogarithm R_{n}} \text{.}
\end{equation*}
Let us assume that $(\eta_{n}, \xi_{n}) \to (0, 0)$ and $\lfrac{\xi_{n}}{\eta_{n}} \to \lfrac{1}{s} - 1$ for some $s \in \RR$. Here $s$ has to be nonnegative, since $\lfrac{\eta_{n}}{(\eta_{n} + \xi_{n})} > 0$, which follows from the proof of the mentioned lemma. From~(C) of the same proof, it is equivalent to the fact that $u_{n} \to s$. Let us now compute $\lim_{n \to \infty} w_{n}$, where $w_{n} = \theCurvedH(\eta_{n}, \xi_{n})$. By Definitions~\ref{definition:theSuitableExtensionOfT} and~\ref{definition:theFunctionH}, this limit is equal to $\theRealValuedT(s)$. By the definition of $s$ and~\eqref{equation:theLimitOfEsEnPrim}, we have $\xi_{n} / (R_{n} - 1) \to (2s-1) / \sqrt{3}$. Note that
\begin{equation*}
\begin{split}
\principalArgument( R_{n} e^{i \xi_{n}} - 1 ) & - \principalArgument(u_{n} - \complexAdjoint{ u_{0} }) = \principalArgument \bigg( \frac{R_{n} e^{i \xi_{n}} - 1}{R_{n} - 1} \bigg) - \principalArgument(u_{n} - \complexAdjoint{ u_{0} }) \\
& = \principalArgument \bigg( R_{n} \cdot \frac{\xi_{n}}{R_{n}-1} \cdot \frac{e^{i \xi_{n}} - 1}{\xi_{n}} + 1 \bigg) - \principalArgument(u_{n} - \complexAdjoint{ u_{0} }) \\
& \to \principalArgument \bigg( 1 + \frac{2s-1}{\sqrt{3}} \cdot i \bigg) - \principalArgument \bigg( s - \frac{1}{2} + \frac{\sqrt{3}i}{2} \bigg) = 2 \arctan \bigg( \frac{2s-1}{\sqrt{3}} \bigg) - \frac{\pi}{2} \text{}
\end{split}
\end{equation*}
and, since $\xi_{n} \to 0$, we have $\lceil (\xi_{n} - \pi) / 2 \pi \rceil \to 0$. Therefore
\begin{equation*}
w_{n} \to -\frac{\sqrt{3}}{3} \bigg( \arctan \bigg( \frac{2s-1}{\sqrt{3}} \bigg) + n_{2} \pi - \frac{\pi}{6} \bigg) - \frac{\realLogarithm( s^2 - s + 1 )}{2} + i n_{1} \pi = \theRealValuedT(s) - \frac{n_{2} \pi}{\sqrt{3}} + i n_{1} \pi \text{,}
\end{equation*}
hence $n_{1} = n_{2} = 0$, which establishes the desired formula.
\end{proof}

Now we can verify the homeomorphicity of $\theCurvedH$. In the proof given below, in (A*)--(C*), we handle certain limit points of $\domainTrapezoid$. Besides, there are five points which need a special treatment (cf.~\eqref{equation:vMonotoneGaussianComplexCST}) while using the Stieltjes inversion formula: $\pm \sqrt{2}$ and $0$, which are branch points of $\halfOfLogarithm$ (it takes the value $\complexInfty$ for them, the limits of $\theFunctionGmu$, however, are finite), and $\pm \sqrt{2+\gamma_{0}}$, for which $\theFunctionFmu$ is zero (they are also its branch points of order $2$). We require (A*)--(C*) also to deal with these points.
\begin{lemma}
\label{lemma:homeomorphicityOfH}
Consider the strip $\domainClosedLowerStrip = \{ w \in \CC : -\lfrac{\pi}{2} \leq \Im w \leq 0 \}$ and let $\domainRangeOfTZero = (-\infty, \lfrac{ \sqrt{3} \pi }{9} ]$.
The function $\theCurvedH \colon \domainTrapezoid \to \domainTrapezoidUnderH$ is a homeomorphism.
\end{lemma}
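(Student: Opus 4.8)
The plan is to show that $\theCurvedH$ (which, by Definition~\ref{definition:theFunctionH}, equals $\theFunctionT\circ\theCurvedf$) is a proper continuous injection onto $\domainTrapezoidUnderH$; properness then upgrades it to a closed embedding, and a relative-openness argument together with the connectedness of $\domainTrapezoidUnderH$ yields surjectivity. To begin, $\theCurvedH$ is continuous by Definition~\ref{definition:theFunctionH} and injective by Proposition~\ref{proposition:injectivityOfH}. From the exact form~\eqref{equation:theExactFormOfTheCurvedH} we read off $\theCurvedHIm(\eta,\xi)=\eta/2$, so $\theCurvedH(\domainTrapezoid)\subseteq\domainClosedLowerStrip$, and a value of $\theCurvedH$ has vanishing imaginary part only along the edge $\eta=0$. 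On that edge $\xi\mapsto\theCurvedHRe(0,\xi)$ is strictly monotonic (shown inside the proof of Proposition~\ref{proposition:injectivityOfH}), it tends to $\theRealValuedT(0)=\sqrt{3}\pi/9$ as $\xi\to 0^{-}$ (since then $\theCurvedf(0,\xi)\to 0$ and $\theFunctionT$ extends $\theRealValuedT$ continuously) and to $+\infty$ as $\xi\to-\infty$ (then $\theCurvedf(0,\xi)\to u_{0}$ by item~(A) in the proof of Lemma~\ref{lemma:propertiesOfTheUpperShell}, and the term $-\tfrac{1}{2}\realLogarithm\lvert u-u_{0}\rvert$ in~\eqref{equation:theExactFormOfTheCurvedH} diverges to $+\infty$). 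Hence $\theCurvedH(\{0\}\times(-\infty,0))=\{\,w:\Im w=0,\ \Re w>\sqrt{3}\pi/9\,\}$, which is disjoint from $\domainRangeOfTZero$; combined with $\theCurvedH(\domainTrapezoid)\subseteq\domainClosedLowerStrip$ this gives $\theCurvedH(\domainTrapezoid)\subseteq\domainTrapezoidUnderH$.

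The core of the proof is a family of limiting relations (A*)--(C*) describing the behaviour of $w_{n}=\theCurvedH(\eta_{n},\xi_{n})$ as $(\eta_{n},\xi_{n})$ leaves every compact subset of $\domainTrapezoid$. Since $\eta_{n}\in[-\pi,0]$, the only such escape modes are $\xi_{n}\to-\infty$ and $\eta_{n}+\xi_{n}\to 0$ (the latter possibly with $(\eta_{n},\xi_{n})\to(0,0)$). Using items~(A)--(C) of the proof of Lemma~\ref{lemma:propertiesOfTheUpperShell} together with~\eqref{equation:theExactFormOfTheCurvedH}, I would establish: (A*) if $\xi_{n}\to-\infty$ then $u_{n}:=\theCurvedf(\eta_{n},\xi_{n})\to u_{0}$ and $\Re w_{n}\to+\infty$ (the $\realLogarithm\lvert u-u_{0}\rvert$ term again dominates, the argument terms being bounded); (B*) if $\eta_{n}+\xi_{n}\to 0$ and $\lvert u_{n}\rvert\to\infty$, then $\realLogarithm(\lvert u-u_{0}\rvert\,\lvert u-\complexAdjoint{u_{0}}\rvert)=2\realLogarithm\lvert u\rvert+o(1)\to+\infty$ while the remaining terms in~\eqref{equation:theExactFormOfTheCurvedH} stay bounded, so $\Re w_{n}\to-\infty$; (C*) if $u_{n}\to s$ for some $s\in[0,\infty)$, then $w_{n}=\theFunctionT(u_{n})\to\theFunctionT(s)=\theRealValuedT(s)$ by continuity of $\theFunctionT$ and Definition~\ref{definition:theSuitableExtensionOfT}, and as $s$ runs through $[0,\infty)$ this limit runs through all of $\domainRangeOfTZero=(-\infty,\sqrt{3}\pi/9]$. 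These relations are precisely what is also needed, later in the proof of Theorem~\ref{theorem:vMonotoneStandardGaussianDistribution}, to control $\theFunctionFmu$ near the five exceptional points $0,\pm\sqrt{2},\pm\sqrt{2+\gamma_{0}}$.

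With (A*)--(C*) available, properness of $\theCurvedH\colon\domainTrapezoid\to\domainTrapezoidUnderH$ follows: if $K\subseteq\domainTrapezoidUnderH$ is compact and $(\eta_{n},\xi_{n})\in\theCurvedH^{-1}(K)$ had no subsequence converging in $\domainTrapezoid$, then after passing to a subsequence one of the escape modes above occurs, and (A*), (B*) or (C*) forces $w_{n}$ to tend to $\complexInfty$ or to a point of $\domainRangeOfTZero$, hence to leave $K$ --- a contradiction. A proper continuous injection into a locally compact Hausdorff space is a closed embedding, so $\theCurvedH$ is a homeomorphism onto a closed subset of $\domainTrapezoidUnderH$. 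It remains to check that this image is relatively open. On the open part $\domainOpenTrapezoid$ this is clear, since there $\theCurvedH=\theContinuedFunctionT\circ\theCurvedf$ with $\theContinuedFunctionT$ holomorphic and $\theContinuedFunctionT'(u)=u/(u^{2}-u+1)\neq 0$, so $\theCurvedH$ is open. Near a boundary point $(\eta_{0},\xi_{0})$ with $\eta_{0}\in\{0,-\pi\}$ one has $(\eta_{0},\xi_{0})\in\domainMaximalOpenTrapezoid$ and $\theCurvedf(\eta_{0},\xi_{0})\notin\Gamma_{\pi/2}\cup\{u_{0}\}$, so $\theContinuedFunctionT\circ\theCurvedf$ is a local homeomorphism on a full two-sided neighbourhood there, and since $\theCurvedHIm=\eta/2$ it maps the sides $\eta<\eta_{0}$ and $\eta>\eta_{0}$ into $\Im w<\eta_{0}/2$ and $\Im w>\eta_{0}/2$ respectively; hence $\theCurvedH$ carries a relative neighbourhood of $(\eta_{0},\xi_{0})$ in $\domainTrapezoid$ onto a relative neighbourhood of $\theCurvedH(\eta_{0},\xi_{0})$ in $\domainClosedLowerStrip$, which lies in $\domainTrapezoidUnderH$ because the image of the edge $\eta=0$ stays in $\{\Re w>\sqrt{3}\pi/9\}$. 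Thus $\theCurvedH(\domainTrapezoid)$ is nonempty, relatively open and relatively closed in the connected set $\domainTrapezoidUnderH$, so $\theCurvedH(\domainTrapezoid)=\domainTrapezoidUnderH$, which completes the proof. (As a byproduct, $\xi\mapsto\theCurvedHRe(-\pi,\xi)$ is a strictly monotone bijection onto $\RR$, with limits $+\infty$ and $-\infty$ at $\xi\to-\infty$ and $\xi\to\pi^{-}$ by (A*) and (B*), so the whole edge $\Im w=-\pi/2$ is attained.)

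The step I expect to be the main obstacle is the precise asymptotic analysis underlying (A*)--(C*): one has to extract the leading term of the polar-coordinate formula~\eqref{equation:theExactFormOfTheCurvedH} both in the two ``escape to infinity'' regimes and, most delicately, in the corner $(\eta,\xi)\to(0,0)$, where by item~(C) the finite limit of $w_{n}$ is controlled by the ratio $\xi_{n}/\eta_{n}$, and then to verify that these three regimes match up exactly with the two ends of the slit $\domainRangeOfTZero$ and with $\complexInfty$. Once this boundary bookkeeping is in place, the rest --- invariance of domain on a manifold with corners, properness, and connectedness --- is routine.
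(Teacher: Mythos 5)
Your proposal is correct in substance, but it reaches the conclusion by a genuinely different route from the paper. The paper first proves (A*) and (B*) as equivalences, then obtains surjectivity onto $\domainTrapezoidUnderH$ fiberwise: for each fixed $\eta\in[-\pi,0]$ it combines the strict monotonicity of $\xi\mapsto\theCurvedHRe(\eta,\xi)$ (from the proof of Proposition~\ref{proposition:injectivityOfH}) with the limits $+\infty$ as $\xi\to-\infty$ and $-\infty$ (respectively $\sqrt{3}\pi/9$ when $\eta=0$) as $\xi\to-\eta$, so that each horizontal line $\Im w=\eta/2$ is swept out; continuity of $\inverseFunction{\theCurvedH}$ is then proved by a direct sequential limit-point argument using (A*)--(C*) and injectivity. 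You instead package the same boundary asymptotics into a properness statement, upgrade the continuous injection to a closed embedding (properness into the locally compact Hausdorff set $\domainTrapezoidUnderH$), and obtain surjectivity from relative openness of the image (nonvanishing of $\theContinuedFunctionT'$ plus the level-set identity $\theCurvedHIm=\eta/2$ at the edges $\eta\in\{0,-\pi\}$) together with connectedness of $\domainTrapezoidUnderH$. A pleasant feature of your route is that only the forward implications of (A*)--(C*) are needed here, whereas the paper proves the equivalences; note, though, that the reverse directions are reused later (Theorem~\ref{theorem:propertiesOfTheInvertedT} and the density theorem), so the paper's extra work is not wasted. Both arguments ultimately rest on the same ingredients: items (A)--(C) from the proof of Lemma~\ref{lemma:propertiesOfTheUpperShell}, the exact form~\eqref{equation:theExactFormOfTheCurvedH}, and the monotonicity from Proposition~\ref{proposition:injectivityOfH}, which you still need for the image of the edge $\eta=0$.

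Two small points should be tightened. First, in your (A*) the ceiling term $2\lceil(\xi_{n}-\pi)/2\pi\rceil\pi$ in~\eqref{equation:theExactFormOfTheCurvedH} is not bounded as $\xi_{n}\to-\infty$: it tends to $-\infty$, but after multiplication by $-\sqrt{3}/6$ it contributes $+\infty$, the same sign as $-\tfrac{1}{2}\realLogarithm\lvert u_{n}-u_{0}\rvert$, so your conclusion $\Re w_{n}\to+\infty$ stands --- just do not call that term bounded. Second, in the boundary-openness step you use $\Im(\theContinuedFunctionT\circ\theCurvedf)=\eta/2$ on a two-sided neighbourhood of the edge, i.e.\ also for $\eta$ slightly outside $[-\pi,0]$, while Lemma~\ref{lemma:exactFormOfH} gives this only on $\domainTrapezoid$; add the one-line continuation: by Proposition~\ref{proposition:theLevelSetsOfT} the function $\Im(\theContinuedFunctionT\circ\theCurvedf)-\eta/2$ is continuous and $\pi\ZZ$-valued on the connected set $\domainMaximalOpenTrapezoid$, hence constant, and it vanishes on $\domainTrapezoid$.
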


\begin{figure}[ht]
    \centering
    \includegraphics[scale=0.3]{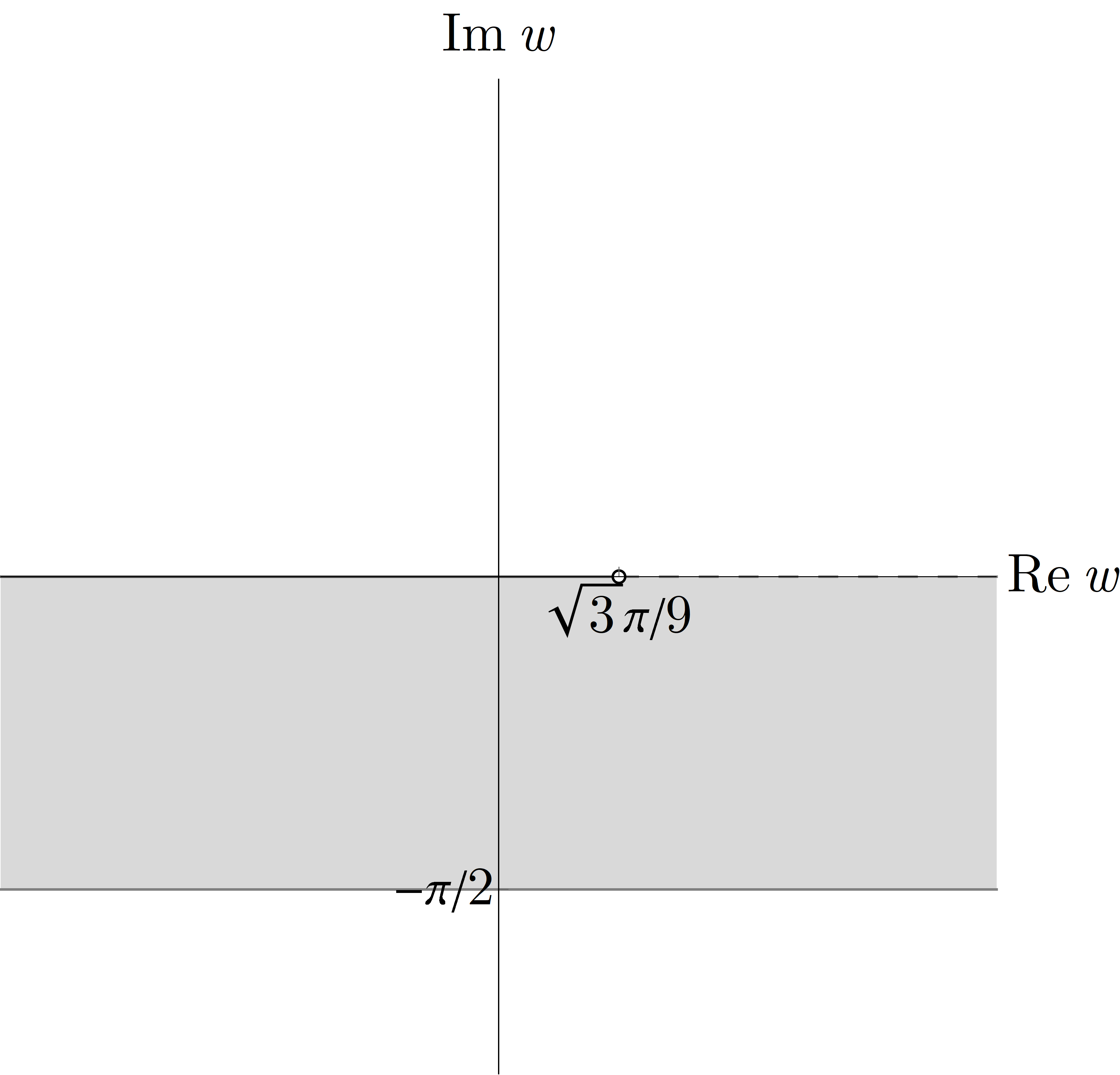}
    \caption{The set $\domainTrapezoidUnderH$}
    \label{figure:domainClosedLowerStrip}
\end{figure}

\begin{proof}
Let us first demonstrate that indeed $\theCurvedH(\domainTrapezoid) = \domainTrapezoidUnderH$. In order to do so, we will prove two statements. Let $(\eta_{n}, \xi_{n})_{n=1}^{\infty}$ be the sequence of elements from $\domainTrapezoid$ and let $R_{n}$, $u_{n}$ and $w_{n}$ be the same as in the proof of the previous lemma.
We have
\begin{itemize}
    \item[(A*)] $\xi_{n} \to -\infty
    \Leftrightarrow
    \Re w_{n} \to \infty$,
    \item[(B*)] $\lfrac{\xi_{n}}{\eta_{n}} \to (-1)^{+}
    \Leftrightarrow
    \Re w_{n} \to -\infty$.
\end{itemize}
In the proof we will use the statements~(A)--(C) from the proof of Lemma~\ref{lemma:propertiesOfTheUpperShell}. Of course,
\begin{equation}
\label{equation:limitPointsCLT1} 
\begin{split}
w_{n} =
& - \frac{\sqrt{3}}{6} \Bigg( \principalArgument(R_{n} e^{i \xi_{n}} - 1) - \principalArgument(u_{n} - \complexAdjoint{ u_{0} }) + 2 \bigg( n_{2} + \bigg \lceil \frac{\xi_{n} + \pi}{2 \pi} \bigg \rceil \bigg) \pi + \frac{13 \pi}{6} \Bigg) \\
& \ - \frac{\realLogarithm ( \lvert u_{n} - u_{0} \rvert \cdot \lvert u_{n} - \complexAdjoint{ u_{0} } \rvert )}{2} + \frac{i \eta_{n}}{2} \text{}
\end{split}
\end{equation}

\noindent \textit{Proof of~(A*).} The implication `$\Rightarrow$' follows directly from~(A). If $\Re w_{n} \to \infty$, then either $\lceil (\xi_{n} - \pi) /2 \pi \rceil \to -\infty$, or $\realLogarithm ( \lvert u_{n} - u_{0} \rvert \cdot \lvert u_{n} - \complexAdjoint{ u_{0} } \rvert ) \to \infty$, which in both cases means that $\xi_{n} \to -\infty$ --- in the latter one it follows from~(A), since $\Im u_{n} > 0$. The remaining terms in~\eqref{equation:limitPointsCLT1} are bounded and thus irrelevant.

\noindent \textit{Proof of~(B*).}
The statement is a consequence of~(B) and of the fact that the sequence $(\xi_{n})_{n=1}^{\infty}$ is bounded: from above, by the definition of $\domainTrapezoid$, and from below, by~(A*).

Fix $-\pi \leq \eta \leq 0$ and let us find the range of $\xi \mapsto \theCurvedH(\eta, \xi)$ for $\xi < -\eta$. In order to do so, due to the monotonicity of $\xi \mapsto \theCurvedHRe(\eta, \xi)$ demonstrated in the proof of Proposition~\ref{proposition:injectivityOfH}, it suffices to find two limits of $\theCurvedHRe(\eta, \xi)$: as $\xi$ approaches $-\infty$ and as $\xi$ approaches $-\eta$ from below. The former limit can be determined from~(A*):
\begin{equation*}
\lim_{\xi \to -\infty} \theCurvedHRe(\eta, \xi) = \infty \text{.}
\end{equation*}
The latter limit depends on $\eta$. Now, assume that $\eta_{n} = \eta$ for all $n$ and $\xi_{n} \to -\eta$. If $\eta < 0$, then $\lfrac{\xi_{n}}{\eta_{n}} \to (-1)^{+}$, hence, according to~(B*), we have $\Re w_{n} \to -\infty$. If $\eta = 0$, then after elementary computations, we get $\lfrac{\xi_{n}}{\eta_{n}} \to \infty$, which means, due to~(B), that $u_{n} \to 0$ (along $\Gamma_{0}$). Since $\theFunctionT$ is continuous,
\begin{equation*}
\theCurvedHRe(\eta, \xi_{n}) = \Re \theFunctionT(u_{n}) \to \Re \theFunctionT(0) = \theFunctionT(0) = \frac{\sqrt{3} \pi}{9} \text{,}
\end{equation*}
i.e. $\theCurvedH(\domainTrapezoid) = \domainTrapezoidUnderH$.

Of course, $\theCurvedH$ is continuous by the definition. It remains to show that so is $\inverseFunction{ \theCurvedH }$.
We drop the assumptions about the sequence $((\eta_{n}, \xi_{n}))_{n=1}^{\infty}$ from the previous paragraph --- we only require that its elements belong to $\domainTrapezoid$. In order to do so, let us first prove the following:
\begin{itemize}
    \item[(C*)] for any $s \geq 0$, $\big( (\eta_{n}, \xi_{n}) \to (0, 0) \wedge \lfrac{\xi_{n}}{\eta_{n}} \to \lfrac{1}{s} - 1 \big)
    \Leftrightarrow
    w_{n} \to \theRealValuedT(s)$
    with the convention $\lfrac{1}{0} = \infty$.
\end{itemize}
We only need to show the implication `$\Leftarrow$'
because the other one follows from~(C) and from the definition of $\theFunctionT$.
Let us then assume that $w_{n} \to \theRealValuedT(s)$, for some $s \geq 0$. Of course, by~\eqref{equation:limitPointsCLT1}, we have $\eta_{n} \to 0$. 
Let us show that also $\xi_{n} \to 0$. If some $\tilde{\xi} < 0$ was a limit point of $(\xi_{n})_{n=1}^{\infty}$, then, by the continuity of $\theCurvedH$, 
it would hold that $\theRealValuedT(s) = \theCurvedH(0, \tilde{\xi})$.
This is impossible, since by the definition of~$\theRealValuedT$ and by Lemma~\ref{lemma:homeomorphicityOfH}, we have $\theRealValuedT(s) \leq \lfrac{\sqrt{3}\pi}{9} < \theCurvedH(0, \tilde{\xi})$. By~(A*), it is also impossible that $-\infty$ is a limit point of $(\xi_{n})_{n=1}^{\infty}$. Thus $(\eta_{n}, \xi_{n}) \to (0, 0)$.
Now we prove that $\lfrac{\xi_{n}}{\eta_{n}} \to \lfrac{1}{s} - 1$. Let $\lfrac{1}{\tilde{s}} - 1$ be a limit point of $(\lfrac{\xi_{n}}{\eta_{n}})_{n=1}^{\infty}$ with $\tilde{s} \in [-\infty, \infty]$. By the proof of Lemma~\ref{lemma:propertiesOfTheUpperShell}, $\tilde{s} \geq 0$, whereas, by~(B*), $\tilde{s} \neq \infty$.
By the implication `$\Rightarrow$', the corresponding limit point of $(w_{n})_{n=1}^{\infty}$ is $\theRealValuedT(\tilde{s})$. The function $\theRealValuedT$ is strictly decreasing on $[0, \infty)$, therefore $\tilde{s} = s$, which proves~(C*).

It remains to prove that $\inverseFunction{\theCurvedH}$ is continuous (see Fig.~\ref{figure:domainTrapezoid} and Fig.~\ref{figure:domainClosedLowerStrip}).
Let $(\eta'_{n}, \xi'_{n})_{n=1}^{\infty}$ be a sequence of elements from $\domainTrapezoid$ such that $w'_{n} = \theCurvedH(\eta'_{n}, \xi'_{n}) \to \theCurvedH(\eta, \xi)$ for some $(\eta, \xi) \in \domainTrapezoid$.
By Lemma~\ref{lemma:exactFormOfH}, $\eta'_{n} \to \eta$, therefore it remains to show that $\xi'_{n} \to \xi$. Let $\tilde{\xi} \in [-\infty, -\eta]$ be a limit point of $(\xi'_{n})_{n=1}^{\infty}$. If it was equal to $-\infty$ or $-\eta$, then by~(A*)--(C*), the corresponding subsequence of $(\Re w'_{n})_{n = 1}^{\infty}$ would tend to $\infty$ or its corresponding limit points would belong to $[ -\infty, \sqrt{3} \pi /9 ] = \theRealValuedT \big( [0, \infty] \big)$, respectively, which is impossible. By the continuity of $\theCurvedH$, the corresponding limit point of $(w'_{n})_{n=1}^{\infty}$ is $\theCurvedH(\eta, \tilde{\xi})$ and, from the injectivity of $\theCurvedH$, we have $\tilde{\xi} = \xi$, which leads to the desired conclusion.
\end{proof}

Now, we are ready to show that $\theFunctionT$ is invertible and to write $\theInverseOfT$ in an implicit form. 
\begin{theorem}
\label{theorem:propertiesOfTheInvertedT}
The function $\theInverseOfT \colon \domainClosedLowerStrip \to \topologicalClosure{\domainOpenUpperShell} \setminus \{ u_{0} \}$ given by
\begin{equation}
\label{equation:theInverseOfT}
\theInverseOfT(w) =
\begin{cases}
\theInverseOfRealT(w) & \text{ if $w \in \domainRangeOfTZero$, } \\
\theCurvedf(\eta, \xi) & \text{ if $w = \theCurvedH(\eta, \xi) \in \domainTrapezoidUnderH$, for $(\eta, \xi) \in \domainTrapezoid$, }
\end{cases}
\end{equation}
is the inverse of $\theFunctionT$. Moreover, it is a continuous extension of $\theInverseOfRealT$ which is analytic on $\domainLowerStrip$. 
\end{theorem}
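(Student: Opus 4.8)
The plan is to show that $\theFunctionT$ is a continuous bijection of $\topologicalClosure{\domainOpenUpperShell} \setminus \{ u_{0} \}$ onto $\domainClosedLowerStrip$ whose set-theoretic inverse is exactly the map written in~\eqref{equation:theInverseOfT}, then to verify that this inverse is continuous, and finally to obtain analyticity on $\domainLowerStrip$ from the holomorphic inverse function theorem. The extension property will then be immediate: on $\domainRangeOfTZero$, which is the domain of $\theInverseOfRealT$, the first case of~\eqref{equation:theInverseOfT} sets $\theInverseOfT = \theInverseOfRealT$.

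First I would split $\topologicalClosure{\domainOpenUpperShell} \setminus \{ u_{0} \}$ as the disjoint union $[0,\infty) \sqcup \theCurvedf(\domainTrapezoid)$, which follows from~\eqref{equation:topologicalClosureOfUpperShell}, from~\eqref{eqation:theRangeOfTheCurvedf}, and from $\theCurvedf(\domainTrapezoid) \subseteq \openedFirstQuadrant$ (Lemma~\ref{lemma:propertiesOfTheUpperShell}), the latter making $[0,\infty)$ disjoint from the rest. On $[0,\infty)$ one has $\theFunctionT = \theRealValuedT$, a strictly decreasing bijection onto $\domainRangeOfTZero = (-\infty, \lfrac{\sqrt{3}\pi}{9}]$. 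On $\theCurvedf(\domainTrapezoid)$, Definition~\ref{definition:theFunctionH} gives $\theFunctionT \circ \theCurvedf = \theCurvedH$; since $[-\pi, 0] \subset (-\lfrac{3\pi}{2}, \lfrac{\pi}{2})$ we have $\domainTrapezoid \subseteq \domainMaximalOpenTrapezoid$, so $\myRestriction{\theCurvedf}{\domainTrapezoid}$ is a homeomorphism onto its image (Lemma~\ref{lemma:changeOfCoordinates-Properties}), and $\theCurvedH \colon \domainTrapezoid \to \domainTrapezoidUnderH$ is a homeomorphism (Lemma~\ref{lemma:homeomorphicityOfH}), hence $\theFunctionT$ restricted to $\theCurvedf(\domainTrapezoid)$ is a homeomorphism onto $\domainTrapezoidUnderH$. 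Because $\domainRangeOfTZero$ and $\domainTrapezoidUnderH$ partition $\domainClosedLowerStrip$ and $\theFunctionT$ is injective on each piece, $\theFunctionT \colon \topologicalClosure{\domainOpenUpperShell} \setminus \{ u_{0} \} \to \domainClosedLowerStrip$ is a continuous bijection, and unwinding~\eqref{equation:theInverseOfT} shows that $\theInverseOfT$ is precisely its two-sided inverse.

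Next I would prove that $\theInverseOfT$ is continuous. It is continuous on $\domainRangeOfTZero$, where it equals $\theInverseOfRealT$, and on $\domainTrapezoidUnderH$, where it equals $\theCurvedf \circ \inverseFunction{\theCurvedH}$, a composition of continuous maps; so the only issue is a point $w_{0} \in \domainRangeOfTZero$ that is a limit of points of $\domainTrapezoidUnderH$. Writing $w_{0} = \theRealValuedT(s_{0})$ with $s_{0} = \theInverseOfRealT(w_{0}) \geq 0$ and taking $w_{n} = \theCurvedH(\eta_{n},\xi_{n}) \to w_{0}$ with $(\eta_{n},\xi_{n}) \in \domainTrapezoid$, statement (C*) from the proof of Lemma~\ref{lemma:homeomorphicityOfH} forces $(\eta_{n},\xi_{n}) \to (0,0)$ and $\xi_{n}/\eta_{n} \to 1/s_{0} - 1$, whereupon statement (C) from the proof of Lemma~\ref{lemma:propertiesOfTheUpperShell} gives $\theInverseOfT(w_{n}) = \theCurvedf(\eta_{n},\xi_{n}) \to s_{0} = \theInverseOfT(w_{0})$. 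Splitting an arbitrary sequence that converges to a point of $\domainRangeOfTZero$ into its part in $\domainRangeOfTZero$ and its part in $\domainTrapezoidUnderH$ then yields continuity of $\theInverseOfT$ throughout $\domainClosedLowerStrip$.

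Finally, for analyticity on $\domainLowerStrip = \{ w : -\lfrac{\pi}{2} < \Im w < 0 \}$: this set misses $\RR$, hence lies in $\domainTrapezoidUnderH$, and for $w_{0} = \theCurvedH(\eta_{0},\xi_{0}) \in \domainLowerStrip$ the identity $\Im w_{0} = \theCurvedHIm(\eta_{0},\xi_{0}) = \eta_{0}/2$ from Lemma~\ref{lemma:exactFormOfH} gives $-\pi < \eta_{0} < 0$, so $(\eta_{0},\xi_{0}) \in \domainOpenTrapezoid$ and $\theInverseOfT(w_{0}) = \theCurvedf(\eta_{0},\xi_{0}) \in \theCurvedf(\domainOpenTrapezoid) = \domainOpenUpperShell$. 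On the open set $\domainOpenUpperShell$ the function $\theFunctionT$ is holomorphic (Definition~\ref{definition:theSuitableExtensionOfT}) and injective, with derivative $u/(u^{2}-u+1)$ nonvanishing there (as $\Re u > 0$ and $u^{2}-u+1$ vanishes only at $u_{0},\complexAdjoint{u_{0}} \notin \domainOpenUpperShell$), so $\myRestriction{\theFunctionT}{\domainOpenUpperShell}$ is a biholomorphism onto the open set $\theFunctionT(\domainOpenUpperShell)$ whose holomorphic inverse coincides with $\theInverseOfT$ there; since $\domainLowerStrip \subseteq \theFunctionT(\domainOpenUpperShell)$ by the previous computation, $\theInverseOfT$ is analytic on $\domainLowerStrip$. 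The hard part, I expect, will be the gluing step in the continuity argument — a continuous bijection need not be a homeomorphism — so one genuinely has to run the boundary limit analysis (C)/(C*) at the seam $\domainRangeOfTZero$, where the ``real'' branch $\theInverseOfRealT$ meets the ``complex'' branch $\theCurvedf \circ \inverseFunction{\theCurvedH}$; the remaining steps are assembly of the preceding lemmas together with a routine application of the inverse function theorem.
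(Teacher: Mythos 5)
Your proposal is correct and follows essentially the same route as the paper: the disjoint decomposition $\topologicalClosure{\domainOpenUpperShell} \setminus \{ u_{0} \} = [0,\infty) \cup \theCurvedf(\domainTrapezoid)$, inversion piecewise via the strict monotonicity of $\theRealValuedT$ and the homeomorphicity of $\theCurvedf$ and $\theCurvedH$ (Lemmas~\ref{lemma:changeOfCoordinates-Properties} and~\ref{lemma:homeomorphicityOfH}), and continuity across the seam $\domainRangeOfTZero$ by exactly the (C)/(C*) limit argument used in the paper's proof. Your explicit inverse-function-theorem step for analyticity on $\domainLowerStrip$ is a sound elaboration of what the paper leaves implicit.
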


\begin{proof}
First, by~\eqref{eqation:theRangeOfTheCurvedf} and~\eqref{equation:topologicalClosureOfUpperShell}, we have
\begin{equation*}
\topologicalClosure{ \domainOpenUpperShell } \setminus \{ u_{0} \} = \theCurvedf( \domainTrapezoid ) \cup [0, \infty) \text{}
\end{equation*}
and Lemma~\ref{lemma:propertiesOfTheUpperShell} implies that $\theCurvedf( \domainTrapezoid ) \cap [0, \infty) = \emptyset$. The existence of $\theInverseOfT$ and its implicit form follow then from the definitions of $\theFunctionT$, $\theInverseOfRealT$ and $\theCurvedH$ and from the homeomorphicity of $\theCurvedf$ and $\theCurvedH$ (see Lemmas~\ref{lemma:changeOfCoordinates-Properties} and~\ref{lemma:homeomorphicityOfH}).

It suffices to show the continuity of $\theInverseOfT$. Let $(w_n)_{n=1}^{\infty}$ be a sequence such that $\domainClosedLowerStrip \ni w_{n} \to w \in \domainClosedLowerStrip$. Without loss of generality, it suffices to consider three cases.
First, let us assume that the elements of the sequence and its limit belong to $\domainTrapezoidUnderH$. In this case, the fact that $\theInverseOfT(w_n) \to \theInverseOfT(w)$ follows from the homeomorphicity of $\theCurvedf$ and $\theCurvedH$.
If the elements of the sequence and its limit belong to $\domainRangeOfTZero$, the same fact is implied by the definition of $\theInverseOfRealT$.
Consider the third case: let $(\eta_n, \xi_n)_{n=1}^{\infty}$ be a sequence of elements belonging to $\domainTrapezoid$ such that $w_n = \theCurvedH(\eta_n, \xi_n)$, for $n \in \NN_{+}$, and let $s \geq 0$ be such that $w = \theRealValuedT(s)$. The continuity of $\theInverseOfT$ follows from the fact that $u_{n} \coloneqq \theCurvedf(\eta_n, \xi_n) \to s$, implied by~(C) and~(C*) from the proofs of Lemmas~\ref{lemma:propertiesOfTheUpperShell} and~\ref{lemma:homeomorphicityOfH}, respectively.
\end{proof}

\section{Density}
\label{section:CLTdensity}
In this section, we give an exact form of the Cauchy--Stieltjes transform of the standard V-monotone Gaussian measure $\mu$. Then, using the Stieltjes inversion formula,
we demonstrate that $\mu$ is absolutely continuous with respect to the Lebesgue measure on $\RR$ and we find its density in an implicit form.

\begin{theorem}
\label{theorem:cauchyTransformOfVmonotoneStandardGaussian}
The reciprocal Cauchy--Stieltjes transform of $\mu$ is the extension of
\begin{equation}
\label{equation:reciprocalCauchyTransform}
\theFunctionFmu(z) = z \theInverseOfT \Bigg( \dfrac{1}{2} \principalLogarithm \bigg( 1 + \dfrac{2}{z^{2} - 2} \bigg) \Bigg) \text{,} \quad z \in \openedFirstQuadrant \cup i \cdot \positiveRealHalfAxis \cup (\sqrt{2+\gamma_{0}}, \infty) \text{}
\end{equation}
to $(\CC_{+} \cup \RR) \setminus [-\sqrt{2+\gamma_{0}}, \sqrt{2+\gamma_{0}}]$, given by $\theFunctionFmu(z) = -\complexAdjoint{\theFunctionFmu(-\complexAdjoint{z})}$ for $\Re z < 0$, and denoted by the same symbol.
\end{theorem}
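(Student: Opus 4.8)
The plan is to establish~\eqref{equation:reciprocalCauchyTransform} first on the ray $(\sqrt{2+\gamma_{0}},\infty)$, then to carry it into $\openedFirstQuadrant$ by analytic continuation, then onto $i\cdot\positiveRealHalfAxis$ by continuity, and finally onto the part of the domain with negative real part by the symmetry of $\mu$. Recall that $\theFunctionFmu=1/\theFunctionGmu$ with $\theFunctionGmu(z)=\int\mu(\diff t)/(z-t)$; since $\mu$ is compactly supported with $\supp\mu\subseteq[-\sqrt{2+\gamma_{0}},\sqrt{2+\gamma_{0}}]$ (Corollary~\ref{corollary:supportOfVmonotoneCLT}), $\theFunctionGmu$ is holomorphic off $[-\sqrt{2+\gamma_{0}},\sqrt{2+\gamma_{0}}]$, and it satisfies $\Im\theFunctionGmu(z)\cdot\Im z<0$ for $z\notin\RR$ and $\theFunctionGmu>0$ on $(\sqrt{2+\gamma_{0}},\infty)$; consequently $\theFunctionFmu$ is holomorphic on $\CC_{+}$ and on the open half-plane $\{\Re z>\sqrt{2+\gamma_{0}}\}$, and is real on the ray $(\sqrt{2+\gamma_{0}},\infty)$. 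Moreover, $\mu$ being symmetric about $0$, \eqref{equation:csTransformOfSymmetricMeasure} gives $\theFunctionGmu(-\complexAdjoint{z})=-\complexAdjoint{\theFunctionGmu(z)}$, hence $\theFunctionFmu(z)=-\complexAdjoint{\theFunctionFmu(-\complexAdjoint{z})}$ wherever both sides are defined.

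For $x\in(\sqrt{2+\gamma_{0}},\infty)$, Proposition~\ref{proposition:domainCutStripANDhalfOfLogarithm} gives $\halfOfLogarithm(x)\in(0,\sqrt{3}\pi/9)\subseteq\domainRangeOfTZero$, so the first branch of~\eqref{equation:theInverseOfT} yields $\theInverseOfT(\halfOfLogarithm(x))=\theInverseOfRealT(\halfOfLogarithm(x))$, and Theorem~\ref{proposition:vMonotoneGaussianRealCST} then gives $\theFunctionFmu(x)=x\,\theInverseOfT(\halfOfLogarithm(x))$; thus \eqref{equation:reciprocalCauchyTransform} holds on that ray. Now set $\Phi(z)=z\,\theInverseOfT(\halfOfLogarithm(z))$ on $\openedFirstQuadrant$. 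By Proposition~\ref{proposition:domainCutStripANDhalfOfLogarithm}, $\halfOfLogarithm$ is analytic on $\CC_{+}$ and maps $\openedFirstQuadrant$ into the open strip $\{w:-\pi/2<\Im w<0\}=\domainLowerStrip$, and by Theorem~\ref{theorem:propertiesOfTheInvertedT} the function $\theInverseOfT$ is analytic on $\domainLowerStrip$; hence $\Phi$ is holomorphic on $\openedFirstQuadrant$. I claim $\Phi$ extends holomorphically across $(\sqrt{2+\gamma_{0}},\infty)$: on a complex neighbourhood of that ray the map $z\mapsto\frac12\principalLogarithm\bigl(1+\tfrac{2}{z^{2}-2}\bigr)$ is holomorphic, since its argument stays in a compact subset of $\CC\setminus(-\infty,0]$ (near $(1,e^{2\sqrt{3}\pi/9})$), and it coincides with $\halfOfLogarithm$ on $\CC_{+}$; meanwhile $\theInverseOfRealT$ is real-analytic on $(-\infty,\sqrt{3}\pi/9)$, hence extends holomorphically to a complex neighbourhood of $(0,\sqrt{3}\pi/9)$, that extension agreeing with $\theInverseOfT$ on the lower side by the reflection principle. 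Since $\theFunctionFmu$ and the continued $\Phi$ are both holomorphic on an open connected neighbourhood of $(\sqrt{2+\gamma_{0}},\infty)$ and agree on that ray (which has limit points there), they coincide on that neighbourhood by the identity theorem, hence on a nonempty open subset of $\openedFirstQuadrant$; as $\openedFirstQuadrant$ is connected and both $\theFunctionFmu$ and $\Phi$ are holomorphic on it, $\theFunctionFmu\equiv\Phi$ on $\openedFirstQuadrant$.

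For $z=iy$ with $y>0$ one computes $\halfOfLogarithm(iy)=\frac12\realLogarithm\bigl(1-\tfrac{2}{y^{2}+2}\bigr)\in(-\infty,0)\subseteq\domainRangeOfTZero$, so $\theInverseOfT(\halfOfLogarithm(iy))$ is well defined; since $\halfOfLogarithm$ is analytic on $\CC_{+}$, $\theInverseOfT$ is continuous on $\domainClosedLowerStrip$ (Theorem~\ref{theorem:propertiesOfTheInvertedT}) and $\theFunctionFmu$ is continuous on $\CC_{+}$, letting $z\to iy$ within $\openedFirstQuadrant$ in the quadrant identity gives $\theFunctionFmu(iy)=iy\,\theInverseOfT(\halfOfLogarithm(iy))$. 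This proves~\eqref{equation:reciprocalCauchyTransform} on all of $\openedFirstQuadrant\cup i\cdot\positiveRealHalfAxis\cup(\sqrt{2+\gamma_{0}},\infty)$. Finally, if $z\in(\CC_{+}\cup\RR)\setminus[-\sqrt{2+\gamma_{0}},\sqrt{2+\gamma_{0}}]$ with $\Re z<0$, then $-\complexAdjoint{z}\in\openedFirstQuadrant\cup(\sqrt{2+\gamma_{0}},\infty)$, so the relation $\theFunctionFmu(z)=-\complexAdjoint{\theFunctionFmu(-\complexAdjoint{z})}$ from the first paragraph expresses $\theFunctionFmu$ through the already-proved formula, which is exactly the claimed extension.

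The one delicate point is the passage into $\openedFirstQuadrant$: mere agreement of two holomorphic functions on a boundary arc does not force agreement on the adjoining region, so one must verify that both $\theFunctionFmu$ and $z\mapsto z\,\theInverseOfT(\halfOfLogarithm(z))$ genuinely continue holomorphically across $(\sqrt{2+\gamma_{0}},\infty)$ — the former because $\theFunctionGmu$ is holomorphic and zero-free there, the latter because of the real-analyticity of $\halfOfLogarithm$ and $\theInverseOfRealT$ on the pertinent intervals together with the identification of the continuation of $\theInverseOfRealT$ with $\theInverseOfT$. One can sidestep the reflection principle by continuing from $\complexInfty$ instead: \eqref{equation:vMonotoneGaussianRealMGF} and the identity theorem near $0$ give $1/\theFunctionMmu(w)=\theInverseOfRealT\bigl(\frac12\principalLogarithm\tfrac{1}{1-2w^{2}}\bigr)$ on a complex neighbourhood of $0$, and $\theFunctionFmu(z)=z/\theFunctionMmu(1/z)$; for $|z|$ large in $\openedFirstQuadrant$, $1/z$ lies in that neighbourhood and $\frac12\principalLogarithm\tfrac{1}{1-2/z^{2}}=\halfOfLogarithm(z)$, so $\theFunctionFmu=\Phi$ on an open subset of $\openedFirstQuadrant$ and hence, by connectedness, on all of it. Everything else reduces to the continuity statements of Proposition~\ref{proposition:domainCutStripANDhalfOfLogarithm} and Theorem~\ref{theorem:propertiesOfTheInvertedT} and to the symmetry of $\mu$.
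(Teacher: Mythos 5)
Your proposal is correct and follows essentially the same route as the paper: establish~\eqref{equation:reciprocalCauchyTransform} on $(\sqrt{2+\gamma_{0}},\infty)$ via Theorem~\ref{proposition:vMonotoneGaussianRealCST}, use Proposition~\ref{proposition:domainCutStripANDhalfOfLogarithm} and Theorem~\ref{theorem:propertiesOfTheInvertedT} to get analyticity of $z\mapsto z\,\theInverseOfT(\halfOfLogarithm(z))$ on $\openedFirstQuadrant$ and hence identification with $\theFunctionFmu$ there, and then obtain the part with $\Re z<0$ from the symmetry of $\mu$ through~\eqref{equation:csTransformOfSymmetricMeasure}. The only difference is that you spell out (via Schwarz reflection for $\theInverseOfT$ and the identity theorem across the ray) the boundary-agreement step that the paper treats as immediate; note only that your alternative ``continuation from $\complexInfty$'' does not really sidestep this, since identifying the local holomorphic extension of $\theInverseOfRealT$ with $\theInverseOfT$ just below $\domainRangeOfTZero$ again requires the same reflection-type argument.
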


\begin{proof}
By Proposition~\ref{proposition:domainCutStripANDhalfOfLogarithm} and Theorem~\ref{theorem:propertiesOfTheInvertedT}, the function given by~\eqref{equation:reciprocalCauchyTransform} is an extension of $x \mapsto \theFunctionFmu(x)$ defined in Theorem~\ref{proposition:vMonotoneGaussianRealCST} and is analytic on $\openedFirstQuadrant$. Therefore, $\theFunctionFmu$ is indeed the reciprocal Cauchy--Stieltjes transform of $\mu$ for $z \in \openedFirstQuadrant \cup i \cdot \positiveRealHalfAxis \cup (\sqrt{2+\gamma_{0}}, \infty)$. Since $\mu$ is symmetric with respect to $0$ (we recall that all its odd moments vanish), we must have $\theFunctionFmu(- \complexAdjoint{z}) = -\complexAdjoint{\theFunctionFmu(z)}$, due to~\eqref{equation:csTransformOfSymmetricMeasure}, and the theorem is proved.
\end{proof}

Below we demonstrate that $\mu$ is absolutely continuous with respect to Lebesgue measure and we find its density.
Let $\theExtendedFmu \colon \semiclosedFirstQuadrant \to \CC$ be given by $\theExtendedFmu(z) = z \theInverseOfT( \halfOfLogarithm(z) )$.
Define $\vCLT \colon \RR \to \RR$ as the even extension of
\begin{equation}
\label{equation:vCLTdensity}
\vCLT(x) = 
\begin{cases}
\dfrac{ \sqrt{2} }{ 2 \pi } \cdot e^{ \lfrac{\sqrt{3} \pi}{9} } & \text{for $x = 0$,} \\
\dfrac{\Im \theExtendedFmu(x)}{\pi \cdot \lvert \theExtendedFmu(x) \rvert^2} & \text{for $x \in (0, \sqrt{2+\gamma_{0}}) \setminus \{ \sqrt{2} \} $,} \\
\dfrac{\sqrt{6}}{4 \pi} & \text{for $x = \sqrt{2}$,} \\
0 & \text{for $x \geq \sqrt{2+\gamma_{0}}$.}
\end{cases}
\end{equation}
Let us now determine the standard V-monototone Gaussian measure.
\begin{theorem}
\label{theorem:vMonotoneStandardGaussianDistribution}
The measure $\mu$ is supported on $[-\sqrt{2+\gamma_{0}}, \sqrt{2+\gamma_{0}}]$ and is absolutely continuous with respect to the Lebesgue measure with the density function $\vCLT$.
\end{theorem}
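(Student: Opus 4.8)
The plan is to read off the density from the Cauchy--Stieltjes transform $\theFunctionGmu = 1/\theFunctionFmu$ by the Stieltjes inversion formula, using the description $\theFunctionFmu(z) = z\,\theInverseOfT(\halfOfLogarithm(z))$ valid on $\openedFirstQuadrant$ (Theorem~\ref{theorem:cauchyTransformOfVmonotoneStandardGaussian}) together with the reflection $\theFunctionFmu(z) = -\complexAdjoint{\theFunctionFmu(-\complexAdjoint{z})}$. The key point is to extend $\theFunctionGmu$ \emph{continuously} from $\CC_{+}$ up to $\RR$ away from the finite exceptional set $F = \{0, \pm\sqrt{2}, \pm\sqrt{2+\gamma_{0}}\}$, and then to show that nothing singular or atomic can be concentrated on $F$.

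First I would fix the boundary values. Let $z\to x_{0}\in(0,\sqrt{2+\gamma_{0}})\setminus\{\sqrt2\}$ inside $\openedFirstQuadrant$. By Proposition~\ref{proposition:domainCutStripANDhalfOfLogarithm}, $\halfOfLogarithm$ is continuous at $x_{0}$ and $\halfOfLogarithm(\openedFirstQuadrant)\subseteq\{-\pi/2<\Im w<0\}\subseteq\domainLowerStrip$, while $\halfOfLogarithm(x_{0})\in\domainClosedLowerStrip$; since $\theInverseOfT$ is continuous on $\domainClosedLowerStrip$ (Theorem~\ref{theorem:propertiesOfTheInvertedT}), we get $\theFunctionFmu(z)\to \theExtendedFmu(x_{0}) = x_{0}\theInverseOfT(\halfOfLogarithm(x_{0}))$. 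Moreover $\halfOfLogarithm(x_{0})\in\domainTrapezoidUnderH$ (it is not real, resp.\ exceeds $\sqrt{3}\pi/9$), and by Lemma~\ref{lemma:exactFormOfH} and $\Im\theCurvedH(\eta,\xi)=\eta/2$, the point $\theInverseOfT(\halfOfLogarithm(x_{0}))$ lies on $\Gamma_{-\pi}$ for $x_{0}\in(0,\sqrt2)$ and on $\Gamma_{0}$ for $x_{0}\in(\sqrt2,\sqrt{2+\gamma_{0}})$; both curves lie in $\openedFirstQuadrant\subseteq\CC_{+}$ by Lemma~\ref{lemma:propertiesOfTheUpperShell}, so $\theExtendedFmu(x_{0})\neq0$ and $\Im\theExtendedFmu(x_{0})>0$. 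Hence $\theFunctionGmu$ extends continuously to $x_{0}$ with value $1/\theExtendedFmu(x_{0})$, and by the reflection also to $(-\sqrt{2+\gamma_{0}},0)\setminus\{-\sqrt2\}$; on $\RR\setminus[-\sqrt{2+\gamma_{0}},\sqrt{2+\gamma_{0}}]$ it is already real (Theorem~\ref{proposition:vMonotoneGaussianRealCST}). This simultaneously verifies the open-interval entries of Table~\ref{table:theBoundaryOfUpperShell}; the $\complexInfty$ entry follows from $\halfOfLogarithm(z)\to0$ and $\theInverseOfT(0)=1$.

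Next I would show $\mu$ has no atom at any $a\in F$; by symmetry it suffices to treat $a\in\{0,\sqrt2,\sqrt{2+\gamma_{0}}\}$, and in each case I use that $\mu(\{a\})$ equals $\lim_{y\to0^{+}} iy\,\theFunctionGmu(a+iy)$ up to a unimodular factor, so vanishes as soon as $\theFunctionGmu(a+iy)$ is bounded or blows up slower than $y^{-1}$. For $a=\sqrt2$: as $z\to\sqrt2$ in $\CC_{+}$ one has $|1+2/(z^{2}-2)|\to\infty$, so $\Re\halfOfLogarithm(z)\to+\infty$ with $\Im\halfOfLogarithm(z)$ bounded in $(-\pi/2,0)$; then $\halfOfLogarithm(z)\in\domainTrapezoidUnderH$ and, by statements (A) and (A*) from the proofs of Lemmas~\ref{lemma:propertiesOfTheUpperShell} and~\ref{lemma:homeomorphicityOfH}, $\theInverseOfT(\halfOfLogarithm(z))\to u_{0}$, whence $\theFunctionFmu(z)\to\sqrt2\,u_{0}\neq0$. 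For $a=\sqrt{2+\gamma_{0}}$: the point $\sqrt{3}\pi/9=\theRealValuedT(0)$ is a branch point of order two of $\theInverseOfRealT$ (as $\theRealValuedT'(0)=0\neq\theRealValuedT''(0)$) and $\halfOfLogarithm$ is holomorphic and locally injective at $\sqrt{2+\gamma_{0}}$ with value $\sqrt{3}\pi/9$, so $\theInverseOfT(\halfOfLogarithm(z))\sim c\sqrt{z-\sqrt{2+\gamma_{0}}}$ and $\theFunctionFmu$ vanishes there to order $1/2$; hence $y\,\theFunctionGmu(\sqrt{2+\gamma_{0}}+iy)=O(y^{1/2})\to0$. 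For $a=0$: along the imaginary axis $\halfOfLogarithm(iy)=\tfrac12\realLogarithm\bigl(y^{2}/(y^{2}+2)\bigr)\in\domainRangeOfTZero$ tends to $-\infty$, and from $\theRealValuedT(s)=-\sqrt{3}\pi/9-\realLogarithm s+o(1)$ (as $s\to\infty$) one gets $\theInverseOfRealT(\halfOfLogarithm(iy))\sim e^{-\sqrt{3}\pi/9}\sqrt2/y$, so $\theFunctionFmu(iy)\to i\sqrt2\,e^{-\sqrt{3}\pi/9}\neq0$. In all three cases $\mu(\{a\})=0$; the same asymptotics give $\lim_{x\to\sqrt2}\theExtendedFmu(x)=\sqrt2\,u_{0}$ and $\lim_{x\to0^{+}}\theExtendedFmu(x)=i\sqrt2\,e^{-\sqrt{3}\pi/9}$, so that $\vCLT$ extends continuously across $0$ and $\pm\sqrt2$ with exactly the stated values $\tfrac{\sqrt6}{4\pi}$ and $\tfrac{\sqrt2}{2\pi}e^{\sqrt{3}\pi/9}$.

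Finally I assemble the conclusion. At every $x_{0}\in\RR\setminus F$ the transform $\theFunctionGmu$ extends continuously from $\CC_{+}$ to a half-neighbourhood of $x_{0}$, so the nonnegative Poisson integrals $-\tfrac1\pi\Im\theFunctionGmu(\cdot+iy)=P_{y}*\mu$ converge locally uniformly near $x_{0}$; by the Stieltjes inversion formula $\mu$ restricted to $\RR\setminus F$ is absolutely continuous with continuous density $-\tfrac1\pi\Im\theFunctionGmu(\cdot+i0)$, which equals $\Im\theExtendedFmu(x)/(\pi|\theExtendedFmu(x)|^{2})=\vCLT(x)$ on $(0,\sqrt{2+\gamma_{0}})\setminus\{\sqrt2\}$, equals $0=\vCLT(x)$ for $|x|>\sqrt{2+\gamma_{0}}$, and, by evenness of $\mu$, equals $\vCLT$ on the negative interval as well. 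Decomposing $\mu=\mu_{\mathrm{ac}}+\mu_{\mathrm{sc}}+\mu_{\mathrm{pp}}$, both singular parts are carried by the finite set $F$; a singular continuous measure has no atoms and $F$ is finite, so $\mu_{\mathrm{sc}}=0$, and $\mu_{\mathrm{pp}}=0$ by the no-atom step. Thus $\mu=\mu_{\mathrm{ac}}$ has a density that agrees with $\vCLT$ off the Lebesgue-null set $F$, hence $\mu(\diff x)=\vCLT(x)\,\diff x$. For the support, $\supp\mu\subseteq[-\sqrt{2+\gamma_{0}},\sqrt{2+\gamma_{0}}]$ by Corollary~\ref{corollary:supportOfVmonotoneCLT}, while $\vCLT>0$ throughout the open interval (because $\Im\theExtendedFmu>0$ there) forces the reverse inclusion, giving equality.

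I expect the genuine difficulty to lie in the endpoint analysis used in the no-atom step and in identifying the boundary values: one must push the explicit formula of Lemma~\ref{lemma:exactFormOfH} together with the limit dichotomies (A)--(C) and (A*)--(C*) far enough to control $\theInverseOfT$ as its argument escapes to infinity inside $\domainClosedLowerStrip$ — precisely enough to see that $\theFunctionFmu$ stays bounded away from $0$ at $0$ and $\pm\sqrt2$ and vanishes to order exactly one half at $\pm\sqrt{2+\gamma_{0}}$ — whereas the remaining ingredients (the approximate-identity form of Stieltjes inversion, the $\sigma_{\mathrm{ac}}/\sigma_{\mathrm{sc}}/\sigma_{\mathrm{pp}}$ bookkeeping, and the already-established homeomorphism properties of $\theCurvedf$, $\theCurvedH$, $\halfOfLogarithm$) are routine.
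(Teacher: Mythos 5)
Your proposal is correct, and it reaches the theorem by a genuinely different route than the paper. The paper first proves that $\theFunctionGmu$ extends \emph{continuously} to all of $\CC_{+}\cup\big(\RR\setminus\{\mp\sqrt{2+\gamma_{0}}\}\big)$ --- including the delicate boundary limits at $x=0$ and $x=\pm\sqrt{2}$ for \emph{arbitrary} approach in $\topologicalClosure{\openedFirstQuadrant}$ (its Case~1 needs the exact formula of Lemma~\ref{lemma:exactFormOfH}, the dichotomies (A*)--(C*) and a mean-value argument to show $u_{n}e^{w_{n}}\to e^{-\sqrt{3}\pi/9}$) --- and then proves the bound $\sqrt{|z^{2}-(2+\gamma_{0})|}\,|\theFunctionGmu(z)|\leq C$ near the endpoints, so that one dominated-convergence application of Stieltjes inversion against continuous compactly supported test functions identifies $\mu=\vCLT\,\diff{x}$ in a single stroke. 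You instead localize: continuity of $\halfOfLogarithm$ (Proposition~\ref{proposition:domainCutStripANDhalfOfLogarithm}) and of $\theInverseOfT$ (Theorem~\ref{theorem:propertiesOfTheInvertedT}) give continuous, nonreal boundary values of $\theFunctionGmu$ on $(0,\sqrt{2+\gamma_{0}})\setminus\{\sqrt{2}\}$ (the image point lies on $\Gamma_{-\pi}$ or $\Gamma_{0}\subseteq\openedFirstQuadrant$), local Stieltjes inversion yields absolute continuity off the finite set $F$, and at the exceptional points you only need radial limits of $iy\,\theFunctionGmu(a+iy)$: boundedness of $\theFunctionGmu$ at $0$ via the elementary real asymptotics of $\theRealValuedT$ along $i\cdot\positiveRealHalfAxis$, boundedness at $\pm\sqrt{2}$ via (A)/(A*), and the order-$\lfrac{1}{2}$ vanishing of $\theFunctionFmu$ at $\pm\sqrt{2+\gamma_{0}}$ via the same Maclaurin expansion the paper uses; the Lebesgue-decomposition bookkeeping ($\mu_{\mathrm{sc}}$ and $\mu_{\mathrm{pp}}$ carried by a finite set, hence zero) and the positivity of the boundary density finish the measure identity and the support equality. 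This buys a proof of the stated theorem that bypasses the paper's hardest computation; what it does \emph{not} give --- the one overreach in your write-up --- is the assertion that ``the same asymptotics'' yield $\lim_{x\to0^{+}}\theExtendedFmu(x)=i\sqrt{2}\,e^{-\sqrt{3}\pi/9}$: that limit is taken along the real axis, where $\halfOfLogarithm(x)$ has imaginary part $-\lfrac{\pi}{2}$ and real part tending to $-\infty$, so $\theInverseOfT(\halfOfLogarithm(x))$ escapes to infinity along $\Gamma_{-\pi}$ and your imaginary-axis computation with the real function $\theRealValuedT$ says nothing about it; this is precisely the content of the paper's Case~1. Fortunately that sub-claim is needed only for the continuity of $\vCLT$ at $0$ and the specific value $\vCLT(0)$, not for the theorem itself, since a density is determined only up to a Lebesgue-null set; so your argument for the statement as given stands, while the paper's stronger boundary analysis is what underwrites its later qualitative description of $\vCLT$.
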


\begin{proof}
The proof is based on Stieltjes inversion formula. Let us first prove that $\theFunctionGmu$ is continuously extendable from $\CC_{+}$ to $\CC_{+} \cup \big( \RR \setminus \{ \mp \sqrt{2+\gamma_{0}} \} \big)$. Let us show that the mentioned extension $\theExtendedGmu \colon \CC_{+} \cup \big( \RR \setminus \{ \mp \sqrt{2+\gamma_{0}} \} \big) \to \CC$ is defined by the extension of
\begin{equation}
\label{equation:theExtensionOfGmu}
\theExtendedGmu(z) =
\begin{cases}
\dfrac{1}{z \theInverseOfT(W(z))} & \text{for $z \in \semiclosedFirstQuadrant$,} \\
-\dfrac{\sqrt{2} i}{2} e^{\sqrt{3} \pi / 9} & \text{for $z = 0$,} \\
\dfrac{\sqrt{2} - \sqrt{6}i}{4} & \text{for $z = \sqrt{2}$,}
\end{cases}
\end{equation}
given by $\theExtendedGmu(z) = -\complexAdjoint{\theExtendedGmu(-\complexAdjoint{z})}$, for $\Re z < 0$.

Let $\halfOfLogarithmRestriction = \myRestriction{\halfOfLogarithm}{ (\topologicalClosure{ \openedFirstQuadrant } \setminus \{ 0, \sqrt{2} \}) }$. We can write
\begin{equation*}
\halfOfLogarithmRestriction(z) = \frac{1}{2} \principalLogarithmB \bigg( 1 + \frac{2}{z^2 - 2} \bigg) \text{,}
\end{equation*}
where $\principalLogarithmB$ is a modified logarithm function given by
\begin{equation*}
\principalLogarithmB(v) = \realLogarithm r + i \varphi \text{} \qquad \text{for $v = r \cdot e^{i \varphi}$ with $r > 0$ and $-\pi \leq \varphi < \pi$, }
\end{equation*}
which differs from the one given by~\eqref{equation:theSuitableLogarithm} only for $v \in (-\infty, 0)$. \triExclamation
By Proposition~\ref{proposition:domainCutStripANDhalfOfLogarithm}, we have
\begin{equation*}
\label{equation:rangeOfHalfLog}
\begin{split}
\halfOfLogarithmRestriction(\openedFirstQuadrant) & = \topologicalInterior{ \domainClosedLowerStrip } \text{,} \\
\halfOfLogarithmRestriction( (\sqrt{2}, \infty) \cup i \cdot \positiveRealHalfAxis ) & = \frac{1}{2} \principalLogarithm( \positiveRealHalfAxis \setminus \{ 1 \} ) = \RR \setminus \{ 0 \} \text{,} \\
\halfOfLogarithmRestriction( (0, \sqrt{2}) ) & = \frac{1}{2} \realLogarithm \lvert (-\infty, 0) \rvert - \frac{i \pi}{2} = \RR - \frac{i \pi}{2} \text{.}
\end{split}
\end{equation*}
Let us now show that the function $\halfOfLogarithmRestriction$ is invertible and
\begin{equation*}
\inverseFunction{\halfOfLogarithmRestriction}(w) = \sqrt{2 + \frac{2}{e^{2w}-1}} \text{,}
\end{equation*}
for $w \in \domainClosedLowerStrip \setminus \{ 0 \}$, where by the square-root function, we mean a branch
\begin{equation*}
\sqrt{v} = \sqrt{r} \cdot e^{i \varphi / 2} \text{,} \qquad \text{for $v = r \cdot e^{i \varphi}$ with $r > 0$ and $-\lfrac{\pi}{2} < \varphi < \lfrac{3 \pi}{2}$, }
\end{equation*}
whose restriction $\sqrt{\text{ . }} \colon \closedUpperHalfPlane \to \topologicalClosure{ \openedFirstQuadrant }$ is a homeomorphism. The procedure of inverting looks as follows: let $z \in \topologicalClosure{ \openedFirstQuadrant } \setminus \{ 0, \sqrt{2} \}$ and let $w = \halfOfLogarithmRestriction(z)$. By~\eqref{equation:rangeOfHalfLog}, we have $w \in \domainClosedLowerStrip \setminus \{ 0 \}$ and
\begin{equation}
\label{equation:invertingOfHalfLog}
\begin{aligned}
1 + \frac{2}{z^2 - 2} & = e^{2w} && \in \topologicalClosure{ \CC_{-} } \setminus \{ 0, 1 \} \text{,} \\
z^2 & = 2 + \frac{2}{e^{2w} - 1} && \in \topologicalClosure{ \CC_{+} } \setminus \{ 0, 2 \} \text{,} \\
z & = \sqrt{2 + \frac{2}{e^{2w} - 1}} && \in \topologicalClosure{ \openedFirstQuadrant } \setminus \{ 0, \sqrt{2} \} \text{.}
\end{aligned}
\end{equation}
The choice of the branch of the square-root function is based on this procedure. Looking at the just determined images and the inverse of $\halfOfLogarithmRestriction$ given by $z$, we conclude that $\halfOfLogarithmRestriction \colon (\topologicalClosure{ \openedFirstQuadrant } \setminus \{ 0, \sqrt{2} \}) \to \domainClosedLowerStrip \setminus \{ 0 \}$ is a homeomorphism.
Let us now prove that
\begin{equation}
\label{equation:halfOfLogarithmInverse}
\inverseFunction{\halfOfLogarithmRestriction}(w) = e^{w} \cdot \sqrt{\frac{2}{e^{2w} - 1}} \text{.}
\end{equation}
Indeed, if $w \in \domainClosedLowerStrip \setminus \{ 0 \}$, then $\lfrac{2}{(e^{2w} - 1)} \in \closedUpperHalfPlane$, therefore the $\rhs$ of the above equality (which we denote by $z$) is well defined and continuous; so is the $\lhs$. Moreover, since both $\inverseFunction{\halfOfLogarithmRestriction}(w)$ and $z$ are always nonzero, it must hold that either $\inverseFunction{\halfOfLogarithmRestriction}(w) = z$ for each suitable $w$, or $\inverseFunction{\halfOfLogarithmRestriction}(w) = -z$ for each suitable $w$. But the former equality is obvious for $w \in (0, \infty)$ and thus~\eqref{equation:halfOfLogarithmInverse} holds for all $w \in \domainClosedLowerStrip \setminus \{ 0 \}$.

Since our measure is symmetric with respect to $0$, it suffices to consider only $x \geq 0$ and $\theExtendedFmu$ and $\theExtendedGmu$ for arguments in $\topologicalClosure{ \openedFirstQuadrant }$.
Let $x \geq 0$ and let $(z_{n})_{n=1}^{\infty}$ be a sequence convergent to $x$, with the elements belonging to $\semiclosedFirstQuadrant$. Let $w_{n} = \halfOfLogarithm(z_{n})$. Now, without loss of generality, we assume that there are two possibilities (see Fig.~\ref{figure:domainClosedLowerStrip}): either there exists a sequence $(\eta_{n}, \xi_{n})_{n=1}^{\infty}$ of elements from $\domainTrapezoid$ such that $w_{n} = \theCurvedH(\eta_{n}, \xi_{n})$ in which case, for any positive $n$, we put $u_{n} = \theCurvedf(\eta_{n}, \xi_{n})$, or (only if $x \leq \sqrt{2 + \gamma_{0}}$) there exists a sequence $(u_{n})_{n=1}^{\infty}$ of nonnegative numbers such that $w_{n} = \theRealValuedT(u_{n})$. In the latter case, $u_{n} \in \RR$, whereas in the former one, $\Im u_{n} \neq 0$, for any positive $n$.
We will use~(A)--(C), the equality~\eqref{equation:theLimitOfEsEnPrim} and~(A*)--(C*) from the proofs of the mentioned lemmas and of Lemma~\ref{lemma:homeomorphicityOfH} while computing certain limits below. 
Let us consider four cases.

\noindent \textbf{Case~1: $x = 0$.} In this case, $\Re w_{n} \to -\infty$. Let us demonstrate that
\begin{equation}
\label{equation:thmVMonotoneStandardGaussianDistribution1}
z_{n} \cdot \theInverseOfT(w_{n}) = \sqrt{\frac{2}{e^{2w_{n}} - 1}} \cdot e^{w_{n}} \cdot u_{n} \to \sqrt{2} i \cdot e^{-\sqrt{3}\pi / 9} \text{.}
\end{equation}
If $u_{n} \in \RR$ for each $n$, then it follows immediately from~\eqref{equation:theRealVersionOfT}.
Otherwise, by~(B*) from the proof of Lemma~\ref{lemma:homeomorphicityOfH}, we have $\lfrac{\xi_{n}}{\eta_{n}} \to (-1)^{+}$. We now show that this implies that
\begin{equation}
\label{equation:thmVMonotoneStandardGaussianDistribution2}
u_{n} e^{w_{n}} \to e^{-\sqrt{3} \pi / 9} \text{.}
\end{equation}
Indeed, let us write
\begin{equation*}
u_{n} e^{w_{n}} = \lvert u_{n} \rvert \cdot e^{w_{n} - i \lfrac{\eta_{n}}{2}} \cdot e^{i (\principalArgument u_{n} + \lfrac{\eta_{n}}{2})} \text{}
\end{equation*}
and show that
\begin{equation}
\label{equation:eq1InItemE}
\lvert u_{n} \rvert \cdot e^{w_{n} - i \lfrac{\eta_{n}}{2}} \to e^{-\sqrt{3} \pi / 9} \text{.}
\end{equation}
By~\eqref{equation:theExactFormOfTheCurvedH}, we have
\begin{equation*}
\begin{split}
\lvert u_{n} \rvert \cdot e^{w_{n} - i \lfrac{\eta_{n}}{2}}
& = \exp \Bigg ( - \frac{\sqrt{3}}{6} \bigg( \principalArgument(R_{n} e^{i \xi_{n}} - 1) - \principalArgument( u_{n} - \complexAdjoint{ u_{0} } ) + 2 \cdot \bigg \lceil \frac{\xi_{n} - \pi}{2 \pi} \bigg \rceil \cdot \pi + \frac{\pi}{6} \bigg) \\
& \ \cdot \sqrt{ \frac{\lvert u_{n} \rvert^2}{\lvert u_{n} - u_{0} \rvert \cdot \lvert u_{n} - \complexAdjoint{ u_{0} } \rvert} } \Bigg ) \text{,}
\end{split}
\end{equation*}
where $R_{n} = e^{-\sqrt{3}(\eta_{n} + \xi_{n})}$. For $n$ large enough, we have $2 \Re u_{n} > 1$. Indeed, since $R_{n} > 0$ and $\lfrac{\eta_{n}}{\xi_{n}} \to (-1)^{+}$, we must have $\xi_{n} > 0$. Since $\eta_{n} + \xi_{n} < 0$ and $-\pi \leq \eta_{n} \leq 0$, we have $\xi_{n} < \pi$. Therefore $\sin \xi_{n} > 0$, i.e. $\Re u_{n} > 1 / 2$. By the definition of $\theCurvedf$ (see Definition~\ref{definition:theCurvedf}), we have
\begin{equation*}
R_{n} e^{i \xi_{n}} - 1 = i \cdot (u_{n} - u_{0}) \cdot \frac{R_{n}^2 - 1}{\sqrt{3}} \text{,}
\end{equation*}
therefore, since $\Re (u_{n} - u_{0}) > 0$, we have
\begin{equation*}
\principalArgument (R_{n} e^{i \xi_{n}} - 1) = \principalArgument (i \cdot (u_{n} - u_{0}) ) = \frac{\pi}{2} + \principalArgument(u_{n} - u_{0}) \text{.}
\end{equation*}
Moreover, since $\Re (u_{n} - \complexAdjoint{ u_{0} }) > 0$, we use Lagrange Mean Value Theorem, getting
\begin{equation*}
\principalArgument(u_{n} - u_{0}) - \principalArgument (u_{n} - \complexAdjoint{ u_{0} }) = \frac{\sqrt{3} \Re (u_{n} - u_{0})}{\lvert u'_{n} \rvert^2} \text{,}
\end{equation*}
where $\Re u'_{n} = \Re (u_{n} - u_{0})$ and $\Im (u_{n} - u_{0}) < \Im u'_{n} < \Im (u_{n} - \complexAdjoint{ u_{0} })$. Since $\lvert u_{n} \rvert \to \infty$ (by~(A) from the proof of Lemma~\ref{lemma:propertiesOfTheUpperShell}), we get
\begin{equation*}
\principalArgument (R_{n} e^{i \xi_{n}} - 1) - \principalArgument (u_{n} - \complexAdjoint{ u_{0} }) \to \frac{\pi}{2} \text{.}
\end{equation*}
Since $\xi_{n} \in (0, \pi)$, we have $\lceil (\xi_{n} - \pi) / 2 \pi \rceil = 0$ and thus~\eqref{equation:eq1InItemE} holds.

It suffices to show that $\principalArgument u_{n} + \lfrac{\eta_{n}}{2} \to 0$. Without loss of generality, we assume that $\eta_{n} \to \eta$. From Lemma~\ref{lemma:changeOfCoordinates-Properties} we get $\Re u_{n} > 0$ and $\Im u_{n} > 0$, hence $\principalArgument u_{n} \in ( 0, \lfrac{\pi}{2} )$. We consider two cases. First, let $-\pi < \eta \leq 0$. From~\eqref{equation:behaviourOfRealAndImaginaryPartsOfw} we see that $\Re u_{n} \to \infty$ and
\begin{equation*}
\frac{\Im u_{n}}{\Re u_{n}} = \frac{1 - \cos \xi_{n}}{\sin \xi_{n}} \cdot \big( 1 + o(1) \big) + o(1) \to \tan \bigg( -\frac{\eta}{2} \bigg) \text{.}
\end{equation*}
If $\eta = -\pi$, then, also from~\eqref{equation:behaviourOfRealAndImaginaryPartsOfw}, $\Im u_{n} \to \infty$ and
\begin{equation*}
\frac{\Re u_{n}}{\Im u_{n}} = \frac{\sin \xi_{n}}{1 - \cos \xi_{n}} \cdot \big( 1 + o(1) \big) + o(1) \to 0^{+} \text{.}
\end{equation*}
This yields
\begin{equation*}
\frac{\Im u_{n}}{\Re u_{n}} \to \tan \bigg( -\frac{\eta}{2} \bigg) \text{}
\end{equation*}
with the convention $\tan (\lfrac{\pi}{2}) = \infty$. Therefore $\principalArgument u_{n} + \eta_{n} \to 0$ and the proof of~\eqref{equation:thmVMonotoneStandardGaussianDistribution2} and thus that of~\eqref{equation:thmVMonotoneStandardGaussianDistribution1} is complete. Note that the limit in~\eqref{equation:thmVMonotoneStandardGaussianDistribution1} is pure imaginary, which implies that
\begin{equation*}
\lim \limits_{z \to 0 + 0^{+} \cdot i} \theFunctionGmu(z) = -\frac{\sqrt{2} i}{2} e^{\sqrt{3} \pi / 9} \text{,}
\end{equation*}
accordingly to the fact that $x \mapsto \Re \theFunctionGmu(x+iy)$ is an odd, and $x \mapsto \Im \theFunctionGmu(x+iy)$ is an even function.

\noindent \textbf{Case~2: $x \in (0, \sqrt{2})$.} From Theorem~\ref{theorem:propertiesOfTheInvertedT} we have
\begin{equation*}
\theFunctionFmu(z_{n}) \to x \theInverseOfT \Bigg( \frac{1}{2} \realLogarithm \bigg( \dfrac{2}{2 - x^{2}} - 1 \bigg) - \dfrac{\pi i}{2} \Bigg) = \theFunctionFmu(x) \text{.}
\end{equation*}

\noindent \textbf{Case~3: $x = \sqrt{2}$.} We see that $\Re w_{n} \to \infty$ and by the definition of $\theInverseOfT$ and, by~(A*) from the proof of Lemma~\ref{lemma:homeomorphicityOfH}, we see that $\theFunctionFmu(w_{n}) \to \lfrac{(\sqrt{2} + \sqrt{6}i)}{2}$. 

\noindent \textbf{Case~4: $x > \sqrt{2}$.} Again, Theorem~\ref{theorem:propertiesOfTheInvertedT} implies that
\begin{equation*}
\theFunctionFmu(z_{n}) \to x \theInverseOfT \Bigg( \frac{1}{2} \realLogarithm \bigg( 1 + \dfrac{2}{x^{2} - 2} \bigg) \Bigg) = \theFunctionFmu(x) \text{.}
\end{equation*}

What we just have proved is that the function $\theExtendedGmu$ is continuous. But its limit as $z$ approaches $\mp \sqrt{2+\gamma_{0}}$ is $\complexInfty$ since $\theFunctionFmu(x) = 0$ for $x = \sqrt{2 + \gamma_{0}}$. We now show that the function
\begin{equation*}
z \mapsto \sqrt{ \lvert z^2 - (2+\gamma_0) \rvert } \cdot \lvert \theExtendedGmu(z) \rvert
\end{equation*}
is bounded on any closed upper half-disk of center $0$ (with the points $\mp \sqrt{2 + \gamma_{0}}$ excluded). Note that $w_{n} \to \halfOfLogarithm(\sqrt{2+\gamma_{0}}) = \lfrac{\sqrt{3}\pi}{9}$. By~\eqref{equation:theRealVersionOfTInTheIntegralForm} and Theorem~\ref{theorem:propertiesOfTheInvertedT}, there exists a sequence $(u_{n})_{n=1}^{\infty}$ such that 
\begin{equation*}
w_{n} = \int_{u_{n}}^{1} \frac{\omega}{\omega^2 - \omega + 1} \, \diff{\omega} \text{,}
\end{equation*}
for each positive $n$. Since $u_{n} \to 0$, this integral is well-defined (i.e. there is no problem with the poles of the integrand). Using the definition of $\theInverseOfT$ and the Maclaurin expansion of the above integral, we obtain
\begin{equation*}
\frac{\lvert \theInverseOfT(w_{n}) \rvert}{\sqrt{\lvert z_{n} - \sqrt{2 + \gamma_{0}} \rvert}} = \frac{\lvert \theInverseOfT(w_{n}) \rvert}{\sqrt{\big \lvert w_{n} - \frac{ \sqrt{3} \pi}{9} \big \rvert}} \cdot \sqrt{ \Bigg \lvert \frac{w_{n} - \frac{ \sqrt{3} \pi}{9}}{z_{n} - \sqrt{2 + \gamma_{0}}} \Bigg \rvert} = \frac{\lvert u_{n} \rvert}{\sqrt{\lvert -u_{n}^2 + o(u_{n}^2) \rvert}} \cdot \sqrt{\lvert 1 + o(1) \rvert} \to 1 \text{.}
\end{equation*}

We are now in the position to show that $\mu$ is absolutely continuous with respect to Lebesgue measure on $\RR$ and its density function is
\begin{equation*}
-\frac{1}{\pi} \Im \theExtendedGmu(x) \text{,}
\end{equation*}
for $x \in \RR \setminus \{ -\sqrt{2+\gamma_{0}}, \sqrt{2+\gamma_{0}} \}$. Indeed, if $\phi$ is continuous and compactly supported on $\RR$, we have
\begin{equation*}
\label{equation:dominatedConvergenceTheorem}
\lim_{y \to 0^{+}} \int_{-\infty}^{\infty} \phi(x) \Im \theFunctionGmu(x+yi) \diff{x} = \int_{-\infty}^{\infty} \phi(x) \Im \theExtendedGmu(x) \diff{x} \text{,}
\end{equation*}
which follows from Lebesgue's dominated convergence theorem with the majorant 
\begin{equation*}
\frac{\lvert \phi(x) \rvert}{\pi} \cdot \frac{\sup_{z} \big( \sqrt{ \lvert z^2 - (2+\gamma_0) \rvert } \lvert \theFunctionGmu(z) \rvert \big)}{\sqrt{ \lvert x + \sqrt{2+\gamma_0} \rvert} \sqrt{\lvert x - \sqrt{2+\gamma_0} \rvert }} \text{,}  
\end{equation*}
where the supremum is taken over $\supp{\phi} + i \cdot (0, \varepsilon]$, for some $\varepsilon > 0$, and where $\lvert x \rvert \neq \sqrt{2+\gamma_0}$. This proves that $\vCLT$ is indeed the density of $\mu$.
\end{proof}

\begin{figure}[ht]
    \centering
    \includegraphics[scale=0.2]{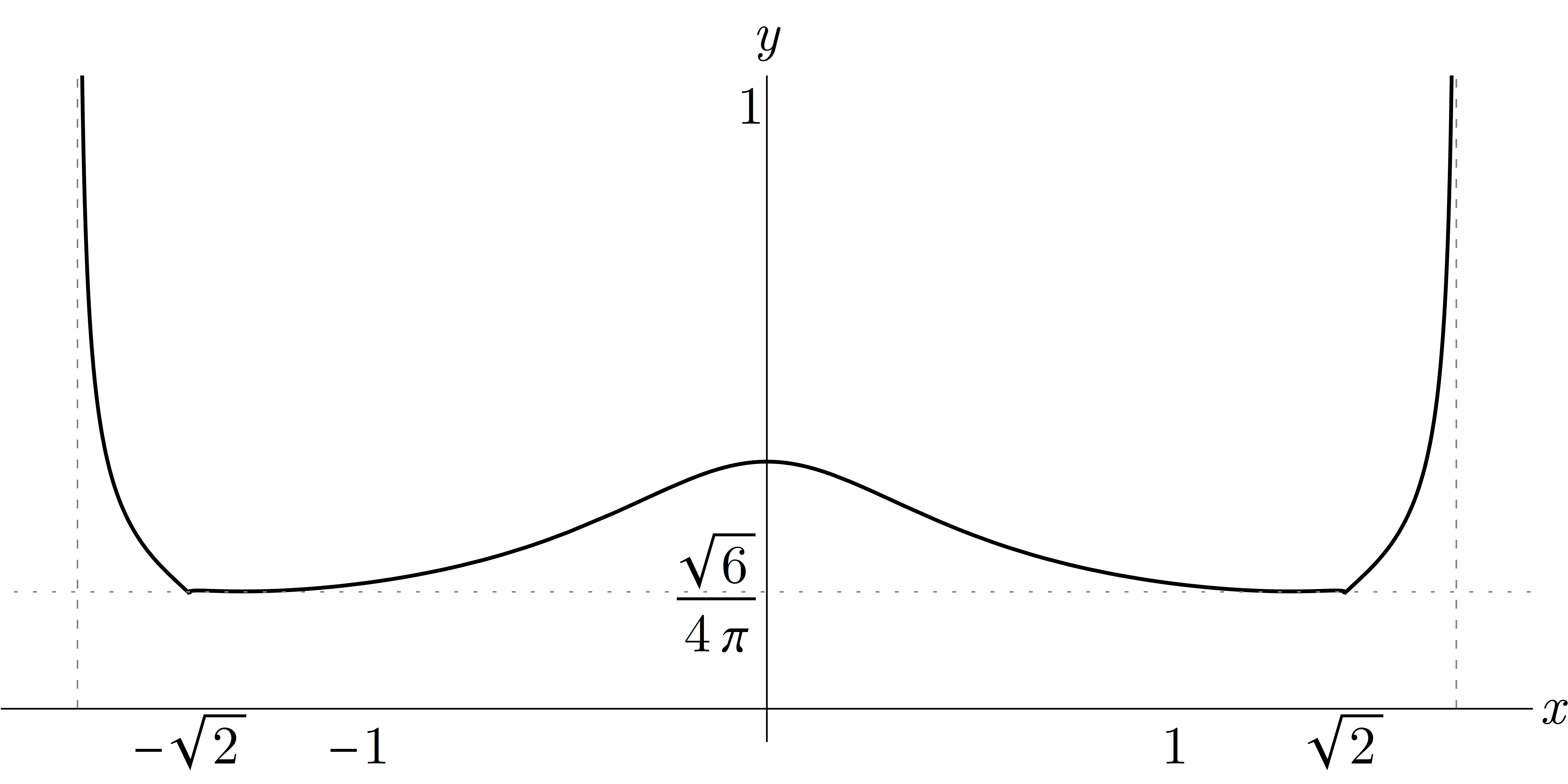}
    \caption{The graph of the density of the standard V-monotone Gaussian measure, $x \in (-\sqrt{2+\gamma_{0}}, \sqrt{2+\gamma_{0}})$}
    \label{figure:vCLT_density}
\end{figure}

The graph of the density is presented in Fig.~\ref{figure:vCLT_density}. It was generated by means of~\eqref{equation:vCLTdensity}, where the values of $\theInverseOfT$ was computed parametrically using~\eqref{equation:theFunctionH}: for $x \in (0, \sqrt{2})$ we put $\eta = -\pi$ and $\xi < \pi$, whereas for $x \in (\sqrt{2}, \sqrt{2+\gamma_{0}})$ we put $\eta = 0$ and $\xi < 0$.

The graph is symmetric with respect to the line $x = 0$ and is a $\mathcal{C}^1$-class function on $(-\sqrt{2+\gamma_{0}}, \sqrt{2+\gamma_{0}}) \setminus \{ \pm \sqrt{2} \}$, but is continuous on the whole open interval. The function has a local maximum at $0$ given by $\vCLT(0) = \lfrac{\sqrt{2}}{2 \pi} \cdot e^{\sqrt{3} \pi / 9}$, is strictly increasing on $(0, x_{1})$ and strictly decreasing on $(x_{2}, \sqrt{2+\gamma_{0}})$ for some $x_{1} < \sqrt{2} < x_{2}$. In the neighborhood of $\sqrt{2}$, the function oscillates and its amplitude and wavelength tend to $0$ exponentially as the argument tends to $\sqrt{2}$.
At the ends of the support the graph has one-sided vertical asymptotes --- the function behaves asymptotically as $\lfrac{1}{\sqrt{(2 + \gamma_{0}) - x^2}}$ as $x \to (\mp \sqrt{2+\gamma_{0}})^{\pm}$.

\section*{Acknowledgements}
I would like to thank Professor Romuald Lenczewski for many remarks and continuous support. I would also like to thank Professor Janusz Wysocza\'{n}ski for valuable comments. I also thank my colleagues Dariusz Kosz, Pawe\l{} Plewa and Artur Rutkowski for several helpful comments.

\normalsize

\bibliographystyle{plain}
\bibliography{main.bib}

\end{document}